\documentclass[11pt,a4paper]{article}
\usepackage[T1]{fontenc}
\usepackage{lmodern}
\usepackage{amsmath,amssymb,amsthm,mathtools}
\usepackage[margin=27mm]{geometry}
\usepackage{microtype}
\usepackage{enumitem}
\usepackage{needspace}
\usepackage{authblk}
\usepackage{cite}
\usepackage[hidelinks,hypertexnames=false]{hyperref}
\numberwithin{equation}{section}
\newtheorem{theorem}{Theorem}[section]
\newtheorem{lemma}[theorem]{Lemma}
\newtheorem{proposition}[theorem]{Proposition}

\theoremstyle{definition}

\theoremstyle{remark}
\newtheorem*{remark}{Remark}
\newcommand{\G}{\mathbf{G}}
\newcommand{\Z}{\mathbb{Z}}
\newcommand{\ind}{\mathbf{1}}
\renewcommand{\phi}{\varphi}
\setlist[enumerate]{label=(\roman*),itemsep=3pt}
\title{A Bondy-type theorem for rainbow pancyclicity in graph systems}
\author[1]{Ailian Chen\thanks{Corresponding author: \texttt{elian1425@fzu.edu.cn}.}}
\author[1]{Liping Zhang\thanks{ \texttt{1519036064@qq.com}.}}
\affil[1]{School of Mathematics and Statistics, Fuzhou University,\\
Fuzhou 350108, China}
\date{}
\hypersetup{
  pdftitle={A Bondy-type theorem for rainbow pancyclicity in graph systems},
  pdfauthor={Ailian Chen and Hao Chen},
  pdfsubject={Rainbow pancyclicity in graph systems},
  pdfkeywords={graph system, rainbow cycle, pancyclicity, Hamilton cycle, extremal graph theory}
}
\begin{document}
\maketitle
\begin{abstract}
We establish a Hamiltonian-to-pancyclic analogue of Bondy's theorem for graph
systems under an aggregate degree condition.  Let
$\G=(G_1,\ldots,G_n)$ be a graph system on a common $n$-vertex set $V$, and
write
$\delta(v)=\min_{i\in[n]}d_{G_i}(v)$.  If $\G$ contains a rainbow
Hamilton cycle and
\[
  \sum_{v\in V}\delta(v)\ge
  \left\lceil\frac{n^2}{2}\right\rceil-1,
\]
then $\G$ is rainbow pancyclic, unless $n$ is even and every member is the
same balanced complete bipartite graph.  For even $n$ the threshold is exact
at the integer level.

Unlike the usual transversal Dirac- or Ore-type hypotheses, our condition is
not layerwise: the member attaining $\delta(v)$ may depend on $v$, and some
vertices may have $\delta(v)<n/2$.  Relative to a fixed rainbow Hamilton
cycle, we count shortcuts whose colors are released by the Hamilton arcs they
replace.  A missing cycle length forces complementary shortcut supports to
cross-intersect.  A counting gap settles even shortening, while equality or
near equality in odd shortening yields a distance-two exchange whose orbits
force the balanced bipartite obstruction.  At the lower integer threshold an
exact defect identity shows that only one or two units of slack are available.
\end{abstract}

\medskip
\noindent\textbf{Keywords:} graph system; rainbow cycle; pancyclicity;
Hamilton cycle; extremal graph theory.

\section{Introduction}

All graphs are finite, simple, and undirected.  A \emph{graph system} is an
indexed family $\G=(G_1,\ldots,G_n)$ of graphs on a common vertex set $V$,
where $|V|=n$.  We regard the index $i$ as color $i$.  A subgraph is \emph{rainbow} if
its edges can be assigned distinct colors $i\in[n]$ so that an edge assigned
color $i$ belongs to $G_i$.  Thus a rainbow Hamilton cycle is a Hamilton
transversal.  We call
$\G$ \emph{rainbow pancyclic} if it contains a rainbow cycle of every
length from $3$ to $n$.  For $v\in V$, write
$
  \delta(v):=\min_{i\in[n]} d_{G_i}(v).
$
We refer to $\delta(v)$ as the \emph{system degree} of $v$.  Notice that it
is a minimum over the members of the system, not the degree in their
intersection.

Bondy's theorem \cite{Bondy1971} says that a Hamiltonian $n$-vertex graph
with at least $n^2/4$ edges is pancyclic unless it is the balanced complete
bipartite graph.  It is one of the cleanest Hamiltonian-to-pancyclic results:
once Hamiltonicity is already present, a global density condition forces the
entire cycle spectrum apart from a rigid extremal family.  The ordinary-graph
density threshold was subsequently refined for non-bipartite Hamiltonian
graphs by H\"aggkvist, Faudree and Schelp
\cite{HaggkvistFaudreeSchelp1981}; related cycle-structure results include
those of Schmeichel and Hakimi \cite{SchmeichelHakimi1988} and Keevash and
Sudakov \cite{KeevashSudakov2010}.

The corresponding transversal theory begins with Hamiltonicity.  Aharoni,
DeVos, Gonz\'alez Hermosillo de la Maza, Montejano and \v{S}\'amal
\cite{Aharoni2020} proved a rainbow Mantel theorem and posed a rainbow
Dirac conjecture.  Joos and Kim \cite{Joos2020} proved the exact Dirac
statement: if $\delta(G_i)\ge n/2$ for every $i$, then the system contains
a rainbow Hamilton cycle.  Since then, the Hamiltonian theory has been
developed in several directions, including general spanning methods
\cite{ChengHanWangWang2023}, stability near the Dirac threshold
\cite{ChengStaden2025}, an exact description of near-Dirac non-Hamiltonian
systems \cite{ChengSunWangWei2026}, and an Ore-type Hamiltonian transversal
criterion due to Liu, Chen and Ma \cite{LiuChenMa2025}.  We refer to Sun,
Wang and Wei \cite{SunWangWeiSurvey2026} for a recent survey.

Cycle-spectrum questions have developed in parallel.  Bradshaw
\cite{Bradshaw2021} obtained a transversal bipancyclicity theorem for
balanced bipartite graph families.  Cheng, Wang and Zhao \cite{Cheng2021}
proved an asymptotic rainbow Hamilton theorem and obtained rainbow cycles
of all lengths $3,\ldots,n-1$ under the stronger layerwise assumption
$\delta(G_i)\ge(n+1)/2$.  Li, Li and Li \cite{LiLiLi2023} studied
vertex-pancyclicity and panconnectedness, and later proved the exact
Dirac-type rainbow pancyclicity theorem \cite{LiLiLi2024}.  More recently,
Li, Wang and Yan \cite{LiWangYan2026} obtained rainbow pancyclicity and
vertex-pancyclicity under a system Ore-type condition.  Sharp transversal
panconnectedness was obtained by Sun, Wang and Wei \cite{SunWangWei2025},
and Ma, You and Zhang \cite{MaYouZhang2026} subsequently strengthened the
minimum-degree theory of rainbow panconnectivity.  Ma and Cai
\cite{MaCai2026} considered rainbow chorded pancyclicity.  Density conditions
of a different kind have also appeared; for example, Liu, Zhang and Wang
\cite{LiuZhangWang2025} studied Hamiltonian structures under large
edge-number assumptions.

Our point of view is different from the Dirac- and Ore-type results just
cited.  Those hypotheses are designed to force a Hamilton transversal and
then, in stronger forms, a rich cycle spectrum.  Here the Hamilton transversal
is part of the input.  We ask instead for the amount of \emph{aggregate}
system density that upgrades an already Hamiltonian graph system to a
pancyclic one.  The relevant quantity is
$
  \sum_{v\in V}\delta(v),
$
rather than a common minimum-degree or Ore-type condition on the members.
Different vertices may attain their minimum degree in different members, and
some may have system degree below $n/2$; in particular, our hypothesis need
not imply $\delta(G_i)\ge n/2$ for every $i$.  Thus the result is a
Hamiltonian-to-pancyclic statement in the sense of Bondy's density theorem,
rather than another criterion for transversal Hamiltonicity.  Combining our
theorem with the rainbow Dirac theorem of Joos and Kim recovers the exact
Dirac-type pancyclicity conclusion of Li, Li and Li.

A Hamilton transversal alone gives little control over shorter cycles: if
all members are the same $n$-cycle, the system has a rainbow Hamilton cycle
but no shorter cycle.  Moreover, in the rainbow setting a chord is useful
only when it can be assigned a color released by the Hamilton arc it
replaces.  This released-color constraint is the main additional feature
absent from the ordinary Bondy argument and motivates the shortcut counting
used below.

Our main result is the following Bondy-type theorem for graph systems.

\begin{theorem}[Rainbow Hamiltonian pancyclicity]\label{thm:bondy}
Let $n\ge3$ and let $\G=(G_1,\ldots,G_n)$ be a graph system on an
$n$-vertex set $V$.  Suppose that $\G$ contains a rainbow Hamilton cycle
and
\begin{equation}\label{eq:main-total}
  \sum_{v\in V}\delta(v)\ge
  \left\lceil\frac{n^2}{2}\right\rceil-1.
\end{equation}
Then $\G$ is rainbow pancyclic, unless $n$ is even and there is a
partition $V=X\sqcup Y$, with $|X|=|Y|=n/2$, such that
$G_i=K_{X,Y}$ for every $i\in[n]$.
\end{theorem}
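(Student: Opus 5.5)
Fix a rainbow Hamilton cycle $C=v_0v_1\cdots v_{n-1}v_0$ (indices in $\Z_n$), and relabel the colours so that the edge $e_j=v_jv_{j+1}$ carries colour $c_j$, with $\{c_0,\dots,c_{n-1}\}=[n]$. Suppose some length $\ell\in\{3,\dots,n-1\}$ is missing; we aim to force the bipartite obstruction. Write $k=n-\ell\ge1$ for the amount of shortening. The basic device is the \emph{shortcut}: a chord $v_av_{a+k+1}$ that replaces the arc $v_a v_{a+1}\cdots v_{a+k+1}$ of $C$. This arc has $k+1$ edges, and removing it releases the colours $c_a,\dots,c_{a+k}$. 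If the chord lies in $G_c$ for some released colour $c$, we obtain a rainbow cycle of length $n-k=\ell$. So, for a missing $\ell$, each chord $v_av_{a+k+1}$ may lie in none of the $k+1$ released colour classes. More generally, any two chords $v_av_{b+1}$ and $v_{a+1}v_{b+k+1}$ whose ``crossing'' combination closes a cycle of length $\ell$ (the standard Bondy/P\'osa crossing pattern) must fail to have a suitable assignment of the colours released by the two arcs they replace. Here there are $k+2$ released colours for the two chords, so a Hall-type condition on two chords is needed.

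For each vertex $v_a$ and each colour $c$, define the support $S_c(a)=\{j: v_av_j\in G_c\}$, which has size at least $\delta(v_a)$. The crossing constraint says that, for complementary indices, $S_c(a)$ and a shift of $S_{c'}(a+1)$ cannot both contain the same position whenever $c$ and $c'$ are released in the right way. This is the cross-intersecting condition. Summing over $a$ gives, for each $a$, an inequality of the shape $\delta(v_a)+\delta(v_{a+1})\le n+\varepsilon_a$, where $\varepsilon_a$ is a small defect counting exceptional positions (positions adjacent to the arc, and colour coincidences). Summing around $C$ then gives $2\sum_v\delta(v)\le n^2+\sum_a\varepsilon_a$, to be compared with the hypothesis $2\sum\delta\ge 2\lceil n^2/2\rceil-2$. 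I would carry this out in two cases. For $k$ odd (even shortening, i.e.\ $\ell\equiv n+1 \pmod 2$ type issues), I expect the count to have a strict gap of at least $3$ units, which yields a contradiction outright. For even $\ell$ in the odd-shortening case, the count is tight, and we learn that almost every $a$ satisfies equality, with total slack at most two units, via an exact defect identity.

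In the equality or near-equality regime I would extract structure. Equality in the cross-intersection forces $S_c(a+1)$ to be exactly the complement of a shift of $S_{c}(a)$ (the distance-two exchange). Iterating along $a\mapsto a+2$, the orbits of this exchange force $v_a$ to be adjacent in every $G_c$ exactly to the vertices of opposite parity. In particular $n$ is even (for odd $n$ the orbit has length $n$ and closes up inconsistently) and every $G_i=K_{X,Y}$, with $X$ the even-indexed and $Y$ the odd-indexed vertices. I would then use the one or two units of slack to rule out near-extremal configurations: a single extra edge in some $G_c$ at a vertex creates an odd chord, and with the released-colour freedom this chord gives the missing length.

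The main obstacle is the slack analysis at the lower threshold $\lceil n^2/2\rceil-1$. There, $\delta(v)$ may be below $n/2$ at some vertices, and the layer attaining the minimum varies with the vertex. So the defects $\varepsilon_a$ must be tracked colour-by-colour, and the released-colour assignment has to be checked by hand in the boundary cases (small $k$, especially $\ell=3$ and $\ell=n-1$, and small $n$). For this I would first try a direct bookkeeping of which colours fail Hall's condition at the few defective $a$.
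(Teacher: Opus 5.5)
Your plan has a genuine gap at its central step, the per-pair bound $\delta(v_a)+\delta(v_{a+1})\le n+\varepsilon_a$ with small $\varepsilon_a$.

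\textbf{The central inequality.} In your crossing pattern, the chords at $v_a$ and $v_{a+1}$ have their far ends on either side of an arc starting near $v_j$. The released colours are $c_a$ together with the colours of that far arc. Only $c_a$ is independent of $j$, and the two chords need distinct colours. So at most one chord can lie in a layer that does not move with $j$. The forbidden crossings therefore never compare two fixed layers, and you cannot extract any inequality involving $\delta(v_a)+\delta(v_{a+1})$.

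Double counting over all pairs $(a,j)$ does recover layer degrees globally. But then the incidences outside the admissible range of $j$ are left over: neighbours of $v_a$ in colour $c_a$ among the $k$ vertices just behind it. These are chords in a colour \emph{not} released by their own replacement, so the missing cycle says nothing about them. Nor are they small: in the exceptional system every vertex has about $k/2$ of them.

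The paper's proof is organised around exactly this issue. It applies endpoint crossing to the $(n-t)$-vertex path $v_i\cdots v_{i-t-1}$, in the two fixed colours $i-1$ and $i-t-1$ of the Hamilton edges leaving the path. With that choice, every exterior incidence is a type-$a$ shortcut ($1\le a\le t-1$) in a boundary colour of its own deleted block, which yields $\sum_a X_a\ge n(t-1)$. The matching upper bound is not a per-vertex estimate. Releasable shortcuts of types $a$ and $t-a$ with edge-disjoint blocks would give a rainbow $C_{n-t}$, so $S_a$ and $S_{t-a}$ are cross-intersecting families of cyclic blocks. The cyclic-block lemma then gives $|S_a|+|S_{t-a}|\le t+2$ when both are nonempty, and $|S_a|+|S_{t-a}|\le n$ always.

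\textbf{Further problems.} First, you place the strict counting gap at odd $k$. For even $n$ the exceptional $K_{X,Y}$ system satisfies the hypothesis with $\sum_v\delta(v)=n^2/2$ and misses every odd $\ell$, i.e.\ every odd $k$, so the count must be tight there. In the paper the gap occurs for even $t$. It comes from the self-complementary type $t/2$, whose blocks pairwise intersect, so $|S_{t/2}|\le t/2+1$. The gap is only $\ell-2$ units, so at the lower threshold it does not exclude $\ell=3$ (odd $n$) or $\ell=4$ (even $n$). The paper handles triangles with a four-layer weighted Mantel lemma and $C_4$ with a separate exchange argument.

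Second, your structural step iterates complementary shifts of fixed-colour neighbourhoods along $a\mapsto a+2$ on one cycle. This only ever sees released colours, whereas the conclusion concerns every $G_c$. The paper needs two further ingredients:
\begin{enumerate}
\item Complementary-chord exclusion. A chord $x_ix_{i+t-1}\in G_r$ of \emph{arbitrary} colour is combined with an edge-disjoint three-edge block $J$. The block is chosen so that $r$ is released by the two deleted blocks, and its type-$2$ shortcut is present in both boundary colours. Together they give a rainbow $C_{n-t}$.
\item A family of rainbow Hamilton cycles closed under the switch exchanging positions $p+1$ and $p+3$. The generated permutations bring any two vertices of a parity class to distance $t-1$, and any two vertices at all when $n$ is odd.
\end{enumerate}
Finally, $\ell=n-1$ has no intermediate shortcut types. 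It needs its own recolouring shift along the cycle to exclude distance-two chords from every layer. ``Checking by hand'' does not supply any of these mechanisms.
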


For even $n=2m$ the threshold is exact at the integer level.  Take one
member to be $K_{m,m}-e$ and the remaining $n-1$ members to be
$K_{m,m}$.  The system has a rainbow Hamilton cycle and
$
  \sum_v\delta(v)=\frac{n^2}{2}-2
$. 
But, being bipartite, it has no odd cycle and is not the exceptional
system in Theorem~\ref{thm:bondy}.  Hence the theorem fails at the next
integer level below its stated bound.

\begin{remark}[The diagonal specialization]\label{rem:diagonal}
If $G_1=\cdots=G_n=G$, then \eqref{eq:main-total} becomes an ordinary edge
condition.  For even $n$ it is exactly Bondy's threshold
$e(G)\ge n^2/4$, while for odd $n$ it becomes
$e(G)\ge (n^2-1)/4=\lfloor n^2/4\rfloor$.  We record this only to locate
the aggregate condition relative to the classical theorem; it is not an
improvement of the best ordinary-graph density results.  In particular,
H\"aggkvist, Faudree and Schelp \cite{HaggkvistFaudreeSchelp1981} proved a
stronger density theorem for non-bipartite Hamiltonian graphs.  The novelty
here lies in allowing the minimizing member in $\delta(v)$ to vary with
$v$, a phenomenon absent in the diagonal setting.
\end{remark}

\paragraph{Idea of the proof.}
Fix a colored rainbow Hamilton cycle
\[
  C=v_0v_1\cdots v_{n-1}v_0, 
  \qquad  v_iv_{i+1}\in G_i \quad(i\in\Z_n),
\]
and suppose that a rainbow $C_{n-t}$ is missing.  A type-$a$ shortcut
replaces an $(a+1)$-edge Hamilton arc and deletes $a$ internal vertices.  We
count only shortcuts that occur in one of the two colors on the boundary
edges of the deleted arc; these colors are automatically released by the
replacement.

Endpoint crossing forces many such releasable shortcuts.  Conversely, a
type-$a$ shortcut and a type-$(t-a)$ shortcut cannot have edge-disjoint
Hamilton blocks, since the two replacements would produce a rainbow
$C_{n-t}$.  Thus complementary shortcut supports are cross-intersecting,
and a sharp cyclic-block bound gives the matching upper estimate.  For even
$t$ this creates a strict counting gap.  For odd $t$, equality forces one
support in each complementary pair to be empty and the other essentially
full, which yields an iterated distance-two switch.

The switch orbits are the decisive structural object.  In odd order they
connect all positions, contradicting the persistent forbidden-chord
condition.  In even order they are precisely the two parity classes, which
become independent in every member; the degree bound then forces the common
balanced complete bipartite graph.  At the lower integer threshold the same
mechanism survives because an exact defect identity leaves only one unit of
slack in odd order and two in even order.  The boundary lengths $C_3$ and
$C_{n-1}$ are handled separately.

Section~\ref{sec:baseline} proves the $n^2/2$ case,
Section~\ref{sec:boundary} treats the lower integer threshold, and
Section~\ref{sec:sharpness} gives the extremal examples.
\section{The \texorpdfstring{$n^2/2$}{n\string^2/2} regime: shortening and rigidity}
\label{sec:baseline}

We first prove the result under the stronger bound
$\sum_v\delta(v)\ge n^2/2$.  The proof has three ingredients: endpoint
crossing forces many releasable shortcuts; a missing cycle makes
complementary shortcut blocks cross-intersect; and the equality case yields
a local distance-two exchange whose position orbits determine the extremal
structure.  Section~\ref{sec:boundary} treats the remaining integer level.

\begin{proposition}[The $n^2/2$ case]
\label{prop:pancy-baseline}
Suppose that $\G$ contains a rainbow Hamilton cycle and
$
  \sum_{v\in V}\delta(v)\ge \frac{n^2}{2}.
$
Then $\G$ is rainbow pancyclic unless $n$ is even and all members are
the same balanced complete bipartite graph.
\end{proposition}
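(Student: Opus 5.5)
We fix a colored rainbow Hamilton cycle $C=v_0v_1\cdots v_{n-1}v_0$ with $v_iv_{i+1}\in G_i$. We suppose that for some $t\in\{1,\dots,n-3\}$ there is no rainbow $C_{n-t}$, and we aim either for a contradiction or for the exceptional structure.

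\textbf{Shortcuts and the lower bound.} For $a\ge1$, a \emph{type-$a$ shortcut at position $i$} is an edge $v_iv_{i+a+1}$ lying in $G_{i}$ or in $G_{i+a}$. These are the colors of the two boundary edges of the arc $v_i\cdots v_{i+a+1}$. Replacing that arc by the chord deletes the $a$ internal vertices and frees all $a+1$ arc colors, so the chord can always be recolored by a released color. Let $s_a$ denote the number of positions carrying a type-$a$ shortcut.

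The lower bound comes from an endpoint-crossing count. For each $i$, look at the neighbors of $v_i$ in $G_i$ and the neighbors of $v_{i+1}$ in $G_{i}$ (or in $G_{i+1}$, as convenient), mapped to indices by shifts along $C$. This is Bondy's trick. Summing $d_{G_i}(v_i)+d_{G_{i-1}}(v_i)\ge 2\delta(v_i)$ over $i$ gives
\[
\sum_{a}(\text{releasable chords of type }a)\;\ge\;\sum_v 2\delta(v)-2n\;\ge\; n^2-2n .
\]
Pairing chords of type $a$ with chords of type $t-a$ then yields $\sum_{a}(s_a+s'_{t-a})$ of order $n\cdot(\text{number of complementary pairs})$, with a precise constant.

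\textbf{Upper bound via cross-intersection.} Let $S_a$ be the set of cyclic blocks (arcs of $a+1$ edges) of $C$ carrying a type-$a$ shortcut. If a block in $S_a$ and a block in $S_{t-a}$ are edge-disjoint, we perform both replacements. This deletes exactly $t$ vertices and gives a cycle of length $n-t$. The colors used are the two released colors on disjoint arcs, so they can be chosen distinct, and this cycle is a rainbow $C_{n-t}$, a contradiction. Hence $S_a$ and $S_{t-a}$ are cross-intersecting families of cyclic intervals. The key lemma to prove is the cyclic-block bound $|S_a|+|S_{t-a}|\le n+O(1)$, with equality only if one family is empty and the other is essentially all $n$ blocks. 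This is a Katona-type circle argument: a block of length $a+1$ meets at most $a+1+(t-a+1)-1$ blocks of the other type. A case check then shows that two nonempty families force the sum to be small unless they are concentrated.

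\textbf{Comparison and rigidity.} Summing over the complementary pairs $\{a,t-a\}$ and comparing with the lower bound gives the result. For even $t$ the middle term $a=t/2$ is self-paired, so $S_{t/2}$ is intersecting, which costs a strict gap and yields a contradiction. For odd $t$ all inequalities must be tight. This forces, in each pair, one support to be empty and the other full. Propagating this gives a rule of the form: $v_iv_{i+3}$ is releasable for every $i$, while $v_iv_{i+2}$ is absent in the relevant colors. This is the distance-two switch $i\mapsto i+2$.

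We then analyze the orbits of the switch. If $n$ is odd, one orbit covers all positions and contradicts the forbidden distance-two chords. If $n$ is even, the two parity classes $X=\{v_{2j}\}$ and $Y=\{v_{2j+1}\}$ must be independent in every $G_i$, since any chord inside a class creates a short even shortening. Then $\delta(v)\le n/2$ for every $v$, and equality in $\sum_v\delta(v)\ge n^2/2$ forces $G_i=K_{X,Y}$ for all $i$.

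\textbf{Boundary cases and the main obstacle.} The cases $C_3$ ($t=n-3$) and $C_{n-1}$ ($t=1$) are treated directly. For $C_{n-1}$ we need a releasable chord $v_iv_{i+2}$, which follows from averaging the degree sum. For $C_3$ we need a triangle with three distinct colors; this follows by a Mantel-type counting, since $\sum_v\delta(v)\ge n^2/2$ with a non-bipartite member forces it.

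I expect the main obstacle to be the sharp cyclic cross-intersection bound together with its equality characterization. The constants must match the crossing count exactly so that odd $t$ yields precisely the switch structure.
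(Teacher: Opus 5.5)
\textbf{There is a genuine gap.} Your outline has the same architecture as the paper: boundary-color shortcuts, cross-intersecting complementary supports, a strict gap from the self-paired type when $t$ is even, and switch orbits when $t$ is odd. But the lower bound, which drives everything, does not follow from what you wrote.

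\textbf{The lower bound.} Summing $d_{G_i}(v_i)+d_{G_{i-1}}(v_i)$ counts boundary-color incidences of \emph{all} types $1\le a\le n-2$, and it never uses the absence of a rainbow $C_{n-t}$. The complementary pairing controls only the types $1\le a\le t-1$. Each of the $n-1-t$ types $a\ge t$ can contribute up to $2n$. So your count yields only $\sum_{a=1}^{t-1}X_a\ge n(2t-n)$, which falls short of the required $n(t-1)$ for every $t\le n-2$. No ``precise constant'' in the pairing repairs this.

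The missing step is the path-crossing inequality (Lemma~\ref{lem:crossing}) applied to the $(n-t)$-vertex rainbow path $P_i=v_iv_{i+1}\cdots v_{i-t-1}$, with its two unused colors $i-1$ and $i-t-1$. It caps the endpoint incidences \emph{inside} $P_i$ (these correspond to the chords of type at least $t$) at $n-t-1$ per row. The incidences outside $P_i$ are exactly the two Hamilton edges plus the boundary-color incidences of types $1,\dots,t-1$, each obtained once as $i$ varies. Summing over $i$ gives $\sum_{a=1}^{t-1}X_a\ge 2\sum_v\delta(v)-2n-n(n-t-1)\ge n(t-1)$, and this is what the complementary upper bound can be matched against.

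\textbf{The case $t=1$.} No averaging argument can produce a releasable chord $v_iv_{i+2}$. The extremal system $K_{X,Y}$ with an alternating Hamilton cycle meets the degree hypothesis and has no distance-two chord in any color. What is needed is structural:
\begin{enumerate}
\item tightness of the deleted-vertex crossing rows gives $v_iv_{i+1}\in G_{i-1}\cap G_i$;
\item this lets you shift colors along the retained arc, so a distance-two chord in \emph{any} member yields a rainbow $C_{n-1}$;
\item the tight rows then force $v_iv_{i+3}\in G_i\cap G_{i+2}$;
\item only then does the switch-orbit argument apply, with forbidden distance $2$.
\end{enumerate}

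\textbf{The cross-intersection bound.} Your Katona estimate (each block meets at most $t+1$ blocks of the other family) gives only $|S_a|+|S_{t-a}|\le 2t+2$ for two nonempty families. This exceeds $n$ when $t$ is close to $n-3$. The odd-$t$ equality analysis needs exactly $\le n$ (not $n+O(1)$), which comes from the sharp bound $t+2\le n-1$ of Lemma~\ref{lem:arc}.

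\textbf{The odd-$t$ rigidity.}
\begin{enumerate}
\item The chords that become forbidden are those of type $t-2$ (distance $t-1$), not distance two.
\item You may first have to reverse a segment to obtain a Hamilton cycle with full type-$2$ support.
\item You must show the type-$(t-2)$ chords are absent from \emph{every} member, on \emph{every} cycle reached by switching. The paper pairs such a chord in color $r$ with an edge-disjoint type-$2$ block that releases $r$.
\end{enumerate}
Absence in boundary colors alone does not make the parity classes independent in every $G_i$. Also, a single chord inside a parity class has odd type and does not by itself produce a rainbow $C_{n-t}$.

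\textbf{The case $C_3$.} A triangle in one non-bipartite member need not be rainbow. A Mantel-type argument must be run on several members simultaneously, as in Lemma~\ref{lem:four-layer-mantel}. Alternatively, simply treat $t=n-3$ by the general argument, as the paper does.
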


Throughout this section we assume the baseline inequality
\begin{equation}
  \sum_{v\in V}\delta(v)\ge \frac{n^2}{2}.
  \label{eq:pancy-global}
\end{equation}
Relabel the members of the system by $\Z_n$ and fix a rainbow Hamilton
cycle
\begin{equation}
  C=v_0v_1\cdots v_{n-1}v_0,
  \qquad
  v_iv_{i+1}\in E(G_i)
  \quad(i\in\Z_n).
  \label{eq:pancy-H}
\end{equation}
All subscripts in this section are read modulo $n$.  More generally,
let
$
  H=x_0x_1\cdots x_{n-1}x_0
$
is an indexed Hamilton cycle, write
\[
  e_i(H)=x_ix_{i+1},
  \qquad
  B_H(i,a)=\{e_i(H),e_{i+1}(H),\ldots,e_{i+a}(H)\}
\]
for the block of $a+1$ consecutive Hamilton edges starting at $x_i$.  The
associated Hamilton arc
$
  x_ix_{i+1}\cdots x_{i+a+1}
$
has length $a+1$ (number of edges) and exactly $a$ internal vertices.  For
the fixed cycle $C$ in \eqref{eq:pancy-H}, we abbreviate
$e_i=e_i(C)$.  Throughout the paper, \emph{size} refers to the cardinality
of a set of cyclic positions, while \emph{length} refers to the number of
edges of a graph path, cycle, or arc.

\subsection{Crossing, released colors, and local exchange}

The proof rests on four elementary tools: an endpoint crossing inequality,
an intersection bound for cyclic blocks, a lemma for simultaneous arc
replacements, and an exchange-closure statement for iterated local switches.

\begin{lemma}[Path crossing]
\label{lem:crossing}
Let
$
  P=x_1x_2\cdots x_\ell
$
be a rainbow path with $\ell\ge3$, and let $a\ne b$ be two colors unused on $P$.  If
$V(P)$ spans no rainbow $C_\ell$, then
\[
  d_{G_a}(x_1,V(P))+d_{G_b}(x_\ell,V(P))\le \ell-1.
\]
\end{lemma}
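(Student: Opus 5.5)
This is the classical Bondy/Ore crossing argument, adapted to the colored setting. Let $P=x_1\cdots x_\ell$ use $\ell-1$ distinct colors, and let $a\neq b$ be two further colors. We will show that each index contributes at most once to the left-hand side.

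First define two index sets inside $\{1,\dots,\ell-1\}$:
\[
S=\{\,i:\ x_1x_{i+1}\in E(G_a)\,\},\qquad T=\{\,i:\ x_\ell x_i\in E(G_b)\,\}.
\]
Since $x_1$ is not adjacent to itself, every $G_a$-neighbour of $x_1$ in $V(P)$ is some $x_{i+1}$ with $1\le i\le \ell-1$, so $|S|=d_{G_a}(x_1,V(P))$. Likewise every $G_b$-neighbour of $x_\ell$ is some $x_i$ with $1\le i\le\ell-1$, so $|T|=d_{G_b}(x_\ell,V(P))$. Both sets lie in a ground set of size $\ell-1$, so it suffices to show $S\cap T=\emptyset$. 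Then $|S|+|T|\le \ell-1$ follows.

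Suppose some $i\in S\cap T$, so that $x_1x_{i+1}\in E(G_a)$ and $x_\ell x_i\in E(G_b)$.

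\emph{Case $i=\ell-1$.} Here $x_1x_\ell\in E(G_a)$, and the path closed by this edge, $x_1x_2\cdots x_\ell x_1$, is a rainbow $C_\ell$ on $V(P)$, since $a$ is unused on $P$. (For $i=1$, the symmetric statement holds with $x_\ell x_1\in E(G_b)$.)

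\emph{Case $2\le i\le \ell-2$.} Consider the cycle
\[
x_1x_2\cdots x_i\,x_\ell x_{\ell-1}\cdots x_{i+1}\,x_1 .
\]
It is the standard crossing cycle. It uses every path edge except $x_ix_{i+1}$, together with the two new edges $x_ix_\ell$ (colored $b$) and $x_{i+1}x_1$ (colored $a$). All of its colors are distinct, because $a\neq b$ and neither appears on $P$. It passes through all $\ell$ vertices, so it is a rainbow $C_\ell$ spanned by $V(P)$.

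Either case contradicts the hypothesis, so $S\cap T=\emptyset$, which proves the inequality.

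The only delicate points are the boundary indices, since the crossing cycle degenerates at $i=1$ and $i=\ell-1$. Both reduce to the closing edge $x_1x_\ell$ lying in $G_a$ or $G_b$, which closes $P$ directly. The hypothesis $\ell\ge 3$ ensures that $C_\ell$ is a genuine cycle.
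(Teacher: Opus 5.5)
Your proposal is correct and follows the paper's argument: you define the same two index sets in $[\ell-1]$, show they are disjoint via the standard crossing cycle (delete $x_ix_{i+1}$, add $x_1x_{i+1}$ in color $a$ and $x_ix_\ell$ in color $b$), and conclude $|S|+|T|\le \ell-1$. The only difference is cosmetic: the paper excludes $x_1x_\ell\in E(G_a)\cup E(G_b)$ at the start and then applies one crossing step to every index, whereas you treat $i=1$ and $i=\ell-1$ as separate closing cases.
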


\begin{proof}
The edge $x_1x_\ell$ lies in neither $G_a$ nor $G_b$, since otherwise
$P$ can be closed to a rainbow $C_\ell$.  Put
\[
 A=\{j\in[\ell-1]:x_1x_{j+1}\in E(G_a)\},
 \qquad
 B=\{j\in[\ell-1]:x_jx_\ell\in E(G_b)\}.
\]
If $j\in A\cap B$, delete $x_jx_{j+1}$ and add
$x_1x_{j+1}$ and $x_jx_\ell$ in colors $a$ and $b$, respectively.  This
produces a rainbow $C_\ell$, a contradiction.  Hence $A\cap B=\varnothing$,
and therefore
\[
  d_{G_a}(x_1,V(P))+d_{G_b}(x_\ell,V(P))
  =|A|+|B|\le \ell-1.
\]
\end{proof}

For $s\in\Z_n$ and $1\le p\le n$, write
\[
  I_p(s)=\{s,s+1,\ldots,s+p-1\}\subseteq\Z_n.
\]
We call $I_p(s)$ a \emph{cyclic $p$-block}.  It is a set of exactly $p$
consecutive positions, so $|I_p(s)|=p$.

\begin{lemma}[Cross-intersecting cyclic blocks]
\label{lem:arc}
Let $\mathcal A$ be a nonempty family of cyclic $p$-blocks and
$\mathcal B$ a nonempty family of cyclic $q$-blocks.  Suppose that
$
  A\cap B$ for every $A\in\mathcal A,\ B\in\mathcal B,
$
and $p+q\le n-1$.  Then
$
  |\mathcal A|+|\mathcal B|\le p+q.
$
\end{lemma}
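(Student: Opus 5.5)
The hypothesis should be read as $A\cap B\neq\varnothing$ for all $A\in\mathcal A$, $B\in\mathcal B$. The plan is to translate this into a difference-set condition on $\Z_n$ and then run an interval-sumset estimate. I encode each block by its starting position. Put $S=\{s: I_p(s)\in\mathcal A\}$ and $T=\{t: I_q(t)\in\mathcal B\}$, so $|\mathcal A|=|S|$ and $|\mathcal B|=|T|$.

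A direct check gives the translation. The blocks $I_p(s)$ and $I_q(t)$ meet if and only if
\[
t-s\in D:=\{-(q-1),\ldots,-1,0,1,\ldots,p-1\}\pmod n .
\]
Since $p+q\le n-1$, the set $D$ is a cyclic arc of exactly $p+q-1$ residues whose complement has at least $2$ residues. The hypothesis therefore becomes $T-S\subseteq D$, and we must show $|S|+|T|\le |D|+1$. This is a Cauchy--Davenport-type inequality for a difference set confined to a proper arc. The point is that, because the arc is proper, no modular wrap-around can help, even though $n$ may be composite.

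The key step is to put $S$ and $T$ inside integer intervals in a consistent way, so that the problem becomes the trivial integer inequality $|X|+|Y|\le |Y-X|+1\le |D|+1$. I would proceed in three steps.
\begin{enumerate}
\item Fix $s_0\in S$. Then $T\subseteq s_0+D$, which is an arc of length $p+q-1<n$. So $T$ lifts uniquely to a set $\tilde T$ of integers in the interval $s_0+[-(q-1),p-1]$.
\item For each $s\in S$, both $t_{\min}-s$ and $t_{\max}-s$ lie in $D+n\Z$, where $t_{\min},t_{\max}$ are the extreme elements of $\tilde T$. I want to choose the lift $\tilde s$ so that $\tilde t-\tilde s$ lies in the same integer copy $[-(q-1),p-1]$ of $D$ for every $\tilde t\in\tilde T$.
\item Once every difference lies in a single copy of $D$, we have $\tilde T-\tilde S\subseteq[-(q-1),p-1]$ inside $\Z$. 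Then $\max\tilde T-\min\tilde S\le p-1$ and $\min\tilde T-\max\tilde S\ge -(q-1)$. These give $(|T|-1)+(|S|-1)\le(\max\tilde T-\min\tilde T)+(\max\tilde S-\min\tilde S)\le p+q-2$, which is the claim.
\end{enumerate}

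The main obstacle is step~(ii). The differences $\tilde t-s$ for $\tilde t\in\tilde T$ form a set of width $w=t_{\max}-t_{\min}\le p+q-2$. That set could in principle straddle two consecutive copies $D$ and $D+n$ of the arc. Straddling is possible only if the width is at least the gap $n-(p+q-1)+1$. To rule this out I would not argue from a single $s$; I would use the nonemptiness and symmetry of both families together.

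Concretely, if some $s$ straddles, then every element of $T$ lies within distance $p-1$ after $s$ or within $q-1$ before $s$, on both sides of the complement gap. I would then compare with the extreme element $s_0$ used in step~(i), and show that the remaining residues are forced to cover the whole of $\Z_n$. That contradicts the fact that the complement of $D$ has at least two residues, and this is exactly where $p+q\le n-1$ rather than $p+q\le n$ is needed.

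If this lifting argument becomes messy, my fallback is induction on $|S|$. Removing the element of $S$ that is extreme with respect to the arc $s_0+D$ should shrink the admissible region for $T$ by at least one. This would give $|T|\le p+q-|S|$ directly.
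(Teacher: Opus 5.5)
Your reformulation $T-S\subseteq D$ is correct, and so is the integer computation in step (iii). Step (ii), however, is false, so the main line cannot be completed.

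Take $n=10$, $p=5$, $q=4$ (so $p+q=n-1$), $S=\{0,5\}$ and $T=\{2,3,4,7,8,9\}$. Here $\Z_{10}\setminus D=\{5,6\}$ and no difference $t-s$ lies in it. So $\{I_5(0),I_5(5)\}$ and $\{I_4(t):t\in T\}$ cross-intersect, and $2+6\le 9$ as the lemma says.

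Now take any integer lifts $\tilde s_1\equiv 0$ and $\tilde s_2\equiv 5$. They satisfy $|\tilde s_1-\tilde s_2|\ge 5$, so $(\tilde s_1+[-3,4])\cap(\tilde s_2+[-3,4])$ contains at most three integers. But $\tilde T$ would have to be a six-element subset of it. Thus no lift with $\tilde T-\tilde S\subseteq[-(q-1),p-1]$ exists, whichever $s_0$ you start from. Concretely, with $s_0=0$ and $\tilde T=\{-3,-2,-1,2,3,4\}$, the element $s=5$ genuinely straddles two copies of $D$.

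The two $p$-blocks cover the whole cycle, so the two families cannot be linearized simultaneously. The contradiction you planned to derive from straddling therefore does not exist.

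Your fallback points in the right direction, but it is anchored to the wrong arc and leaves the key shrinking step as ``should''. In particular, $S$ need not lie in $s_0+D$ (here $5\notin D$). What works is to linearize only the forbidden sets:

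Fix $t_0\in T$; this is where $\mathcal B\ne\varnothing$ enters. Then $S\subseteq t_0-D$, an arc of $p+q-1$ residues, so $S$ lifts to integers $s_1<\cdots<s_k$ with $s_k-s_1\le p+q-2$.

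The forbidden intervals $[s_i+p,\,s_i+n-q]$ each contain $r=n-p-q+1$ integers. They all lie in a window of at most $n-1$ consecutive integers, so they do not wrap modulo $n$. Hence their union has at least $k+r-1$ residues, and $|T|\le n-(k+r-1)=p+q-k$.

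This is essentially the paper's proof. The paper instead cuts the cycle at a start position outside the union $N$ of forbidden $r$-blocks, which again exists because $\mathcal B\ne\varnothing$.
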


\begin{proof}
Fix a cyclic $p$-block $I_p(s)$.  A cyclic $q$-block is disjoint from it
exactly when its starting position belongs to
\[
  I_r(s+p)=\{s+p,s+p+1,\ldots,s+n-q\},
  \qquad r:=n-p-q+1.
\]
Thus the forbidden starting positions form a cyclic $r$-block.

Let $N$ be the set of starting positions of all cyclic $q$-blocks that are
disjoint from at least one member of $\mathcal A$.  Since every member of
$\mathcal B$ meets every member of $\mathcal A$, no member of
$\mathcal B$ starts in $N$.  Moreover, $\mathcal B\ne\varnothing$, so
$N\ne\Z_n$.

Choose a position outside $N$ and cut the cyclic order immediately before
that position.  Every forbidden cyclic $r$-block is contained in $N$, so
none crosses the cut.  Different members of $\mathcal A$ give different
forbidden starting positions, and hence different forbidden $r$-blocks.
After cutting, these are ordinary blocks of $r$ consecutive positions on
a line.  Order them by their left endpoints.  Since every block has size $r$,
their right endpoints are strictly increasing.  The first block contributes
$r$ positions and each later block contributes at least one new position.
Hence
$
  |N|\ge r+|\mathcal A|-1.
$
Therefore
$
  |\mathcal B|\le n-|N|
  \le n-r-|\mathcal A|+1.
$
Since $r=n-p-q+1$, we obtain
$
  |\mathcal A|+|\mathcal B|\le p+q.
$
\end{proof}

We also need the following elementary replacement lemma.

\begin{lemma}[Disjoint arc replacements]
\label{lem:disjoint-shortcuts}
Let
$
 H=x_0x_1\cdots x_{n-1}x_0
$
be a rainbow Hamilton cycle, with $x_jx_{j+1}$ colored $c_j$.  For $h=1,\ldots,s$, let
\[
  B_h=\{x_{p_h}x_{p_h+1},x_{p_h+1}x_{p_h+2},\ldots,
          x_{p_h+a_h}x_{p_h+a_h+1}\}
\]
be a block of $a_h+1$ consecutive Hamilton edges, where $a_h\ge1$.
Assume that the edge sets $B_1,\ldots,B_s$ are pairwise disjoint and that
$
  n-\sum_{h=1}^s a_h\ge3.
$
The associated Hamilton arcs may share endpoints.  Replace the arc
associated with $B_h$ by the chord
$
 f_h=x_{p_h}x_{p_h+a_h+1}.
$
Let $R$ be the set of colors on all deleted Hamilton edges.  If the chords
$f_1,\ldots,f_s$ can be assigned pairwise distinct legal colors from $R$,
then the resulting cycle is a rainbow cycle of length
$
 n-\sum_{h=1}^s a_h.
$
\end{lemma}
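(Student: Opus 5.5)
The plan is to check directly that the object produced by the replacements is a cycle, that it has the claimed length, and that the coloring is rainbow. I would write the edge set of the new graph as
\[
  E' = \Bigl(E(H)\setminus \bigcup_{h=1}^s B_h\Bigr)\cup\{f_1,\ldots,f_s\}.
\]
Since the blocks $B_h$ are pairwise disjoint edge sets of consecutive Hamilton edges, the cycle $H$ splits into the $s$ deleted arcs $P_h=x_{p_h}\cdots x_{p_h+a_h+1}$ and the complementary kept arcs. Some kept arcs may be trivial, namely single vertices, when two deleted arcs share an endpoint. Traversing $H$ in cyclic order and replacing each $P_h$ by the chord $f_h$ joining its endpoints, I obtain a closed walk. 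Its vertex set is $V$ minus the internal vertices of the arcs $P_h$.

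The first step is to show that this closed walk is a cycle, i.e.\ that it visits each surviving vertex once. Internal vertices of distinct arcs are distinct, because the edge blocks are disjoint and internal vertices of $P_h$ are incident only to edges of $B_h$ on $H$. Likewise, every surviving vertex is visited exactly once in the cyclic traversal, since each position of $H$ is passed once and the internal vertices are simply skipped. The number of surviving vertices is $n-\sum_h a_h\ge 3$, so the walk has at least three vertices and is a genuine cycle. This lower bound also rules out degenerate cases: a chord $f_h$ cannot coincide with a kept Hamilton edge or with another chord, and it cannot be a loop. If one of these happened, the closed walk would have length at most $2$, which contradicts the bound. It remains to check that each $f_h$ is a non-edge of the cycle structure. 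This holds because its endpoints are at cyclic distance $a_h+1\ge 2$ along $H$, so $f_h$ is not a Hamilton edge. Hence the length is the number of surviving vertices, $n-\sum_h a_h$.

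For rainbowness, the kept Hamilton edges carry their original colors $c_j$, which are pairwise distinct because $H$ is rainbow. None of these colors lies in $R$, since $R$ consists exactly of the colors of the deleted edges and the colors of $H$ are all distinct. By hypothesis the chords receive pairwise distinct legal colors from $R$. Therefore all colors on the new cycle are distinct, and each edge lies in the member of its color, which gives a rainbow cycle.

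The only delicate point is the degeneracy issue, i.e.\ when arcs share endpoints or when a chord might duplicate another edge. I handle it with the length bound $n-\sum_h a_h\ge 3$ as described in the first step.
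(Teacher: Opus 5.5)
Your argument is correct and follows the same route as the paper: you delete the interiors of the edge-disjoint arcs, join each pair of endpoints by its chord in the original cyclic order, and use the count of $n-\sum_h a_h\ge3$ surviving vertices to get a simple cycle; you then note that retained Hamilton edges keep their distinct colors outside $R$ while the chords take distinct colors from $R$, and you spell out the degeneracy checks that the paper leaves implicit. One small precision: the cyclic distance between the endpoints of $f_h$ is $\min\{a_h+1,\,n-a_h-1\}$ rather than $a_h+1$, but both quantities are at least $2$ under the hypothesis, so your conclusion stands.
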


\begin{proof}
Because the Hamilton-edge blocks are pairwise edge-disjoint, the interiors of
the corresponding Hamilton arcs are disjoint; two consecutive arcs may
share an endpoint.  Delete the internal vertices of all chosen arcs and,
in the original cyclic order, join the two endpoints of each deleted arc
by its chord.  Exactly $n-\sum_h a_h$ distinct vertices remain, and the
hypothesis $n-\sum_h a_h\ge3$ ensures that they form one simple cycle.
The $h$th replacement deletes exactly $a_h$ internal vertices.  Every
retained Hamilton edge keeps its original color, which is outside $R$,
while the new chords receive pairwise distinct colors from $R$.  Hence the
resulting cycle is rainbow and has the stated length.
\end{proof}

\begin{remark}[The released-color constraint]
Lemma~\ref{lem:disjoint-shortcuts} isolates the genuinely rainbow part of
the shortening argument.  Geometrically, edge-disjoint replacement blocks
are enough to produce a shorter cycle.  In the rainbow setting, the new chords must also
receive distinct colors from the set released by the deleted Hamilton edges.
In general this is a small system-of-distinct-representatives problem.  Our
boundary-color convention is designed so that, for the one- and two-shortcut
replacements used below, the required representatives are supplied
automatically by the deleted blocks.
\end{remark}

\begin{lemma}[Exchange closure and the bipartite structure]
\label{lem:exchange-closure}
Assume $n\ge5$.  Let $\mathcal H$ be a nonempty collection of colored rainbow Hamilton
cycles of $\G$.  Suppose that, whenever
$
 H=x_0x_1\cdots x_{n-1}x_0\in\mathcal H,
$
the local switch
\begin{equation}\label{eq:abstract-switch}
 x_p\,x_{p+1}\,x_{p+2}\,x_{p+3}\,x_{p+4}
 \longmapsto
 x_p\,x_{p+3}\,x_{p+2}\,x_{p+1}\,x_{p+4}
\end{equation}
can be given a rainbow coloring that produces another member of
$\mathcal H$, for every $p\in\Z_n$.  Suppose further that for some even
integer $q$ with $2\le q\le n-2$ every $H\in\mathcal H$ satisfies
\begin{equation}\label{eq:abstract-forbidden}
 x_ix_{i+q}\notin E(G_r)
 \qquad\text{for all }i,r\in\Z_n.
\end{equation}
Then $n$ is even.  If $X$ and $Y$ are the vertices occupying the even
and odd positions, respectively, of any fixed $H\in\mathcal H$, then
$X$ and $Y$ are independent in every member $G_r$.  In particular, if
also
$
 \sum_{v\in V}\delta(v)\ge \frac{n^2}{2},
$
then $|X|=|Y|=n/2$ and
$
 G_r=K_{X,Y}$ for every $r\in\Z_n$.

\end{lemma}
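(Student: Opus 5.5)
Fix $H_0=x_0x_1\cdots x_{n-1}x_0\in\mathcal H$. The switch \eqref{eq:abstract-switch} at $p$ transposes the vertices in positions $p+1$ and $p+3$ and fixes every other position. The plan is to track where vertices can travel under repeated switches. Call vertices $u,w$ \emph{linked} if some $H\in\mathcal H$ has $u$ and $w$ at cyclic distance $q$. By \eqref{eq:abstract-forbidden}, linked vertices are non-adjacent in every $G_r$.

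\textbf{Step 1 (reachable position pairs).} The switch at $p$ acts on positions as the transposition $(p+1\;p+3)$. Composing switches along one orbit realises every permutation of that orbit, while vertices outside the orbit stay put. The orbits of the group generated by these transpositions are the classes of $\Z_n$ modulo $\gcd(2,n)$. Therefore, starting from $H_0$, any two vertices whose original positions lie in the same class can be moved to any two prescribed distinct positions of that class, staying inside $\mathcal H$. I will check that $n\ge5$ makes the transpositions $(i\;i+2)$ generate the full symmetric group on each class.

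\textbf{Step 2 ($n$ is odd is impossible).} If $n$ is odd there is a single orbit, namely all of $\Z_n$. Take any edge of $H_0$, say $x_0x_1\in G_0$. By Step 1 there is some $H\in\mathcal H$ in which $x_0$ and $x_1$ occupy positions $0$ and $q$. In that $H$ they are at distance $q$, yet they are adjacent in $G_0$, contradicting \eqref{eq:abstract-forbidden}. Hence $n$ is even.

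\textbf{Step 3 (independence of the parity classes).} Now let $n$ be even, and let $X$ and $Y$ be the vertices in the even and odd positions of $H_0$. Since $q$ is even and $2\le q\le n-2$, positions $0$ and $q$ are distinct and both even. Given distinct $u,w\in X$, Step 1 yields $H\in\mathcal H$ with $u$ at position $0$ and $w$ at position $q$. Then \eqref{eq:abstract-forbidden} gives $uw\notin G_r$ for every $r$. The same argument with positions $1$ and $1+q$ handles $Y$. Switches preserve the parity classes, so the choice of fixed $H$ is irrelevant; the statement for an arbitrary $H\in\mathcal H$ follows because that $H$ is itself a starting member.

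\textbf{Step 4 (degree count).} Each $G_r$ is bipartite with sides $X$ and $Y$, so $\delta(v)\le|Y|$ for $v\in X$ and $\delta(v)\le|X|$ for $v\in Y$. This gives
\[
 \sum_v\delta(v)\le 2|X||Y|\le n^2/2.
\]
Equality then forces $|X|=|Y|=n/2$ and $\delta(v)$ equal to the full opposite side for every $v$. Hence every $G_r$ contains all $X$--$Y$ edges, so $G_r=K_{X,Y}$.

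The main obstacle is Step 1. One must ensure that the switch transpositions really act independently: each switch must be applicable to the current cycle in $\mathcal H$, which the hypothesis grants for every $p$ and every member. One must also confirm that the transpositions $(i\;i+2)$, taken over all $i$, generate the full symmetric group on each residue class and not merely a transitive action. Transitivity on ordered pairs of distinct positions is what is needed, and this holds for the full symmetric group.
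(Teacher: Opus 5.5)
Your proposal is correct and follows essentially the same route as the paper: the switches act as adjacent transpositions in the step-$2$ cyclic order of positions, so they generate the full symmetric group on each orbit. This rules out odd $n$, makes each parity class independent in every $G_r$, and the degree count then forces $G_r=K_{X,Y}$. The only differences are cosmetic: you get the odd-$n$ contradiction from a single Hamilton edge rather than from every $G_r$ being edgeless, and you obtain $|X|=|Y|=n/2$ from $2|X||Y|\le n^2/2$ rather than directly from the sizes of the position classes.
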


\begin{proof}
The switch \eqref{eq:abstract-switch} exchanges the vertices in positions
$p+1$ and $p+3$ and fixes all other positions.  Thus it is an adjacent
transposition in the cyclic order obtained by repeatedly adding $2$ to a
position.

If $n$ is odd, addition by $2$ gives a single cyclic order on all $n$
positions.  Adjacent transpositions in this order generate all
permutations of the positions.  More concretely, one may first move a
chosen vertex to a prescribed position and then, after deleting that
fixed position from the cyclic order, move a second chosen vertex along
the remaining linear order.  Hence any prescribed pair of vertices can
be placed in positions $i$ and $i+q$.  By
\eqref{eq:abstract-forbidden} that pair is nonadjacent in every $G_r$.
Since the pair was arbitrary, every $G_r$ would be edgeless, contrary to
the existence of a Hamilton cycle in $\mathcal H$.  Therefore $n$ is
even.

For even $n$, addition by $2$ has exactly two cyclic orbits, the even
positions and the odd positions.  The allowed switches generate all
permutations inside each orbit.  Since $q$ is even, any two vertices that
start in the same parity class can therefore be placed in positions $i$
and $i+q$.  Equation \eqref{eq:abstract-forbidden} shows that every such
pair is nonadjacent in every $G_r$.  Thus both parity classes $X$ and $Y$
are independent in every $G_r$, and hence $G_r\subseteq K_{X,Y}$.

The two position classes have size $n/2$.  Consequently every vertex has
degree at most $n/2$ in every $G_r$, so $\delta(v)\le n/2$ for all $v$.
The total-degree hypothesis gives
$\sum_v\delta(v)\ge n^2/2$, which is exactly the sum of these $n$
pointwise upper bounds.  Hence $\delta(v)=n/2$ for every $v$.  Since
$d_{G_r}(v)\ge\delta(v)=n/2$ for every $v,r$, while
$G_r\subseteq K_{X,Y}$, every $G_r$ is exactly $K_{X,Y}$.
\end{proof}

\subsection{Shortcut supports and the complementary obstruction}

Fix $2\le t\le n-3$ and suppose that no rainbow $C_{n-t}$ exists.
The next notation separates the two sides of the argument.  The quantities
$X_a$ record how many \emph{released-color incidences} are forced by density,
while the support sets $S_a$ retain only the geometric positions of the
corresponding shortcuts.  The absence of $C_{n-t}$ will force the supports
for types $a$ and $t-a$ to cross-intersect.

For a type-$a$ shortcut, the two colors on the first and last deleted
Hamilton edges will be called its boundary colors.  It is useful to define
the support relative to the \emph{colored Hamilton cycle}, rather than only
to the fixed normalized cycle.  If
$
  H=x_0x_1\cdots x_{n-1}x_0$,
  $x_ix_{i+1}$ has color $c_i$.
Then for $1\le a\le t-1$, put
\begin{equation}
  S_a(H)=\{i\in\Z_n:
  x_ix_{i+a+1}\in E(G_{c_i})\cup E(G_{c_{i+a}})\}.
\end{equation}
Thus $i\in S_a(H)$ precisely when the type-$a$ chord replacing
$B_H(i,a)$ is available in at least one of the two colors released at the
ends of that deleted block.  Let $X_a(H)$ count the two boundary-color
incidences separately:
\begin{equation}
  X_a(H)=\sum_{i\in\Z_n}
  \left(
  \ind_{\{x_ix_{i+a+1}\in E(G_{c_i})\}}
  +
  \ind_{\{x_ix_{i+a+1}\in E(G_{c_{i+a}})\}}
  \right).
\end{equation}
For the normalized cycle $C$ in \eqref{eq:pancy-H}, we abbreviate
$
  S_a:=S_a(C)$, $X_a:=X_a(C)$.
Equivalently,
\begin{equation}
  S_a=\{i\in\Z_n:
  v_iv_{i+a+1}\in E(G_i)\cup E(G_{i+a})\},
  \label{eq:Sa}
\end{equation}
and
\begin{equation}
  X_a=\sum_{i\in\Z_n}
  \left(
  \ind_{\{v_iv_{i+a+1}\in E(G_i)\}}
  +
  \ind_{\{v_iv_{i+a+1}\in E(G_{i+a})\}}
  \right).
  \label{eq:Xa}
\end{equation}
In either notation,
\begin{equation}
  X_a(H)\le2|S_a(H)|
  \text{ and } X_a\le2|S_a|.
  \label{eq:X-support}
\end{equation}

\smallskip
\noindent\emph{Lower bound from crossing.}
For each
$i\in\Z_n$, consider the rainbow path
$
  P_i=v_iv_{i+1}\cdots v_{i-t-1},
$
which has $n-t$ vertices.  The colors $i-1$ and $i-t-1$ are unused on
$P_i$.  Since there is no rainbow $C_{n-t}$, Lemma~\ref{lem:crossing}
gives
\begin{equation}
  d_{G_{i-1}}(v_i,P_i)
  +d_{G_{i-t-1}}(v_{i-t-1},P_i)
  \le n-t-1.
  \label{eq:count-cross}
\end{equation}
The vertices outside $P_i$ are
$v_{i-t},v_{i-t+1},\ldots,v_{i-1}$.  We record the exterior incidences at
the two endpoints explicitly.  At $v_i$, the exterior vertices may be
written as
$
  v_{i-a-1}$ for $0\le a\le t-1.
$
For $a=0$ we obtain the Hamilton edge $v_iv_{i-1}$, which lies in
$G_{i-1}$.  For $1\le a\le t-1$, the incidence
$
  v_iv_{i-a-1}\in E(G_{i-1})
$
is exactly the second boundary-color incidence counted in $X_a$, at the
position $i-a-1$.

For the other endpoint put $j=i-t-1$.  Its exterior vertices are
$v_{j+a+1}$ for $0\le a\le t-1$.  Again $a=0$ gives the Hamilton edge
$v_jv_{j+1}\in E(G_j)$, while for $1\le a\le t-1$ the incidence
$
  v_jv_{j+a+1}\in E(G_j)
$
is exactly the first boundary-color incidence counted in $X_a$, at position
$j$.  As $i$ runs through $\Z_n$, every indicator occurring in every $X_a$
is obtained exactly once in this way.  Hence the total number of exterior
incidences in the two relevant colors is
$
  2n+\sum_{a=1}^{t-1}X_a.
$
Summing the two endpoint degrees before subtracting these exterior
incidences gives
\begin{align*}
&\sum_{i\in\Z_n}
\bigl(
  d_{G_{i-1}}(v_i,P_i)
  +d_{G_{i-t-1}}(v_{i-t-1},P_i)
\bigr)\\
&\qquad=
\sum_{i\in\Z_n}
\bigl(
  d_{G_{i-1}}(v_i)
  +d_{G_{i-t-1}}(v_{i-t-1})
\bigr)
-
\left(2n+\sum_{a=1}^{t-1}X_a\right)\\
&\qquad\ge
2\sum_{v\in V}\delta(v)-2n-
\sum_{a=1}^{t-1}X_a.
\end{align*}
Together with \eqref{eq:count-cross}, this yields
\[
  n(n-t-1)
  \ge 2\sum_{v\in V}\delta(v)-2n-
      \sum_{a=1}^{t-1}X_a.
\]
Using \eqref{eq:pancy-global}, we obtain
\begin{equation}
  \sum_{a=1}^{t-1}X_a\ge n(t-1).
\end{equation}
Using \eqref{eq:X-support}, we obtain
\begin{equation}
  \sum_{a=1}^{t-1}|S_a|\ge\frac{n(t-1)}2.
  \label{eq:Sa-lower}
\end{equation}

\smallskip
\noindent\emph{Complementary upper bound.}
Fix $1\le a\le t-1$.  If $i\in S_a$ and $j\in S_{t-a}$ had edge-disjoint Hamilton-edge
blocks
$
  B_C(i,a)$
  and
  $B_C(j,t-a)$,
choose for each chord a boundary color in which it is present.  Each chosen
color belongs to a Hamilton edge in its own deleted block, and the two
colors are distinct because the two blocks are edge-disjoint.  The
corresponding arcs may share an endpoint, which is allowed in
Lemma~\ref{lem:disjoint-shortcuts}.  Since the resulting cycle has length
at least $3$, that lemma gives a rainbow cycle of length
$
  n-a-(t-a)=n-t,
$
a contradiction.  Hence every Hamilton-edge block arising from $S_a$
meets every block arising from $S_{t-a}$.  Equivalently, the cyclic
$(a+1)$-blocks of edge indices from $S_a$ cross-intersect the cyclic
$(t-a+1)$-blocks from $S_{t-a}$.  If both support sets are nonempty,
Lemma~\ref{lem:arc} gives
\begin{equation}
  |S_a|+|S_{t-a}|\le t+2,
  \label{eq:pair-short}
\end{equation}
because the two cyclic blocks have sizes $a+1$ and $t-a+1$, and
$(a+1)+(t-a+1)=t+2\le n-1$.  In particular,
\begin{equation}
  |S_a|+|S_{t-a}|\le n
  \qquad(1\le a\le t-1),
  \label{eq:pair-n}
\end{equation}
where the inequality is immediate if one of the two sets is empty.

\subsection{Even shortening: a counting gap}

When $t$ is even, the complementary pairing has a middle,
self-complementary type.  This produces a strict gap between the density
lower bound and the cross-intersection upper bound.

\begin{lemma}[Even $t$]
\label{thm:even}
Let $t=2m$ with $2\le t\le n-3$.  Then $\G$ contains a rainbow
$C_{n-t}$.
\end{lemma}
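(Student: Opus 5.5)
Argue by contradiction: assume no rainbow $C_{n-t}$ exists, so the counting lower bound \eqref{eq:Sa-lower} holds,
\[
  \sum_{a=1}^{t-1}|S_a|\ge \frac{n(t-1)}{2},
\]
and compare it with an upper bound obtained by pairing types. The indices $a\in\{1,\ldots,t-1\}$ with $t=2m$ split into the $m-1$ complementary pairs $\{a,t-a\}$ with $1\le a\le m-1$, plus the single self-complementary type $a=m$. For each genuine pair, \eqref{eq:pair-n} gives $|S_a|+|S_{t-a}|\le n$. Summing over the pairs contributes at most $n(m-1)$.

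The key step is the middle type $a=m$. Apply Lemma~\ref{lem:arc} with $\mathcal A=\mathcal B$ equal to the family of cyclic $(m+1)$-blocks indexed by $S_m$. The complementary-obstruction argument applies verbatim with $a=t-a=m$: two positions $i,j\in S_m$ with edge-disjoint blocks give, by Lemma~\ref{lem:disjoint-shortcuts}, a rainbow $C_{n-2m}=C_{n-t}$. The boundary colors are distinct because the blocks are edge-disjoint, and the resulting length $n-t\ge3$. Hence $S_m$'s blocks are pairwise intersecting, hence cross-intersecting with themselves. Since $(m+1)+(m+1)=t+2\le n-1$, Lemma~\ref{lem:arc} yields $2|S_m|\le t+2$, i.e.\ $|S_m|\le m+1$; this holds trivially if $S_m=\varnothing$.

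Combining the bounds gives
\[
  \frac{n(2m-1)}{2}\le \sum_a|S_a|\le n(m-1)+m+1,
\]
which simplifies to $n/2\le m+1$, i.e.\ $n\le t+2$. This contradicts $t\le n-3$, so a rainbow $C_{n-t}$ exists.

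The main point to verify carefully is the self-intersecting application at $a=m$. One must check that Lemma~\ref{lem:arc} applies with $\mathcal A=\mathcal B$: its proof only uses that every member of $\mathcal B$ meets every member of $\mathcal A$, which includes $A=B$. Alternatively, one can bound an intersecting family of cyclic $(m+1)$-blocks directly by $m+1$ when $2(m+1)\le n$, which holds here. One should also check that for $i\ne j$ the two chords have distinct chosen colors, and that the case $i=j$ is harmless since a block always meets itself.
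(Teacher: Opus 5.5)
Your proposal is correct and matches the paper's proof step for step. The $m-1$ complementary pairs are bounded by \eqref{eq:pair-n}, the self-complementary support $S_m$ is bounded by $m+1$ via Lemma~\ref{lem:arc} with $\mathcal A=\mathcal B$ (the same self-application the paper makes), and comparing with \eqref{eq:Sa-lower} yields $n\le t+2$.
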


\begin{proof}
Suppose not.  By \eqref{eq:pair-n}, 
$
  |S_a|+|S_{2m-a}|\le n
$ for $1\le a\le m-1$.
Hence these $2m-2$ sets contribute at most $n(m-1)$ in total.

It remains to bound $S_m$.  The claim $|S_m|\le m+1$ is trivial if
$S_m=\varnothing$.  Otherwise, two edge-disjoint blocks
$
  B_C(i,m)$ and  $B_C(j,m)\ (i,j\in S_m)$
would provide two shortcuts in distinct released boundary colors and hence,
by Lemma~\ref{lem:disjoint-shortcuts}, a rainbow $C_{n-2m}$.
Thus the cyclic $(m+1)$-blocks
$
  \mathcal I=\bigl\{I_{m+1}(i):i\in S_m\bigr\}
$
are pairwise intersecting.  Since $2(m+1)=t+2\le n-1$, Lemma~\ref{lem:arc} with
$\mathcal A=\mathcal B=\mathcal I$ gives
$
  2|S_m|=2|\mathcal I|\le2(m+1).
$
so $|S_m|\le m+1$.  Therefore
\[
  \sum_{a=1}^{2m-1}|S_a|
  \le n(m-1)+m+1.
\]
On the other hand, \eqref{eq:Sa-lower} gives
\[
  \sum_{a=1}^{2m-1}|S_a|\ge\frac{n(2m-1)}2=nm-\frac n2.
\]
Combining the two bounds gives
$
 nm-\frac n2\le n(m-1)+m+1$.
Thus $n\le2m+2=t+2$, contradicting $t\le n-3$.
\end{proof}

\subsection{Odd shortening: equality and parity rigidity}

When $t$ is odd, there is no self-complementary shortcut type: the types
pair perfectly as $(a,t-a)$.  The lower and upper support counts can
therefore coincide.  The task is no longer to obtain a counting
contradiction, but to understand the resulting equality structure and
propagate it by local exchange.

\begin{lemma}[Odd $t$]
\label{thm:odd}
Let $t$ be odd with $3\le t\le n-3$.  If there is no rainbow
$C_{n-t}$, then $n$ is even and there is a partition
$ V=X\sqcup Y$ with
  $|X|=|Y|=\frac n2,$
such that
\[
  G_0=\cdots=G_{n-1}=K_{X,Y}.
\]
\end{lemma}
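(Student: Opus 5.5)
The plan is to run the counting of the previous subsection to equality and then feed the resulting structure into Lemma~\ref{lem:exchange-closure}. Write $t=2k+1$ with $k\ge1$. The types $1,\dots,t-1$ split into $k$ complementary pairs $(a,t-a)$ with $1\le a\le k$. By \eqref{eq:pair-n} each pair contributes at most $n$, so $\sum_{a=1}^{t-1}|S_a|\le kn=\frac{n(t-1)}2$. Together with \eqref{eq:Sa-lower} this gives equality throughout, and the equality propagates back through the derivation:
\begin{itemize}
\item $\sum_v\delta(v)=n^2/2$ exactly;
\item $X_a=2|S_a|$ for every $a$, so each support position carries its chord in \emph{both} boundary colors;
\item every crossing inequality \eqref{eq:count-cross} is tight, with $d_{G_{i-1}}(v_i)=\delta(v_i)$ and similarly at the other end;
\item $|S_a|+|S_{t-a}|=n$ for every pair.
\end{itemize}
Since $t+2<n$, \eqref{eq:pair-short} forbids both sets in a pair from being nonempty. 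Hence for each $a$ one of $S_a,S_{t-a}$ is empty and the other equals $\Z_n$. In particular, for $a=1$, either $S_1=\Z_n$ or $S_{t-1}=\Z_n$.

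The next step is to extract the distance-two switch. Suppose first that $S_1=\Z_n$ with both boundary colors. Then for every $p$ the chord $v_pv_{p+2}$ lies in $G_p$ and in $G_{p+1}$. Applying this at positions $p$, $p+1$, $p+2$ should let us recolor the path $v_pv_{p+3}v_{p+2}v_{p+1}v_{p+4}$ rainbowly, using only the four released colors $p,\dots,p+3$. I would check this assignment directly. The switch \eqref{eq:abstract-switch} also needs $v_pv_{p+3}$ and $v_{p+1}v_{p+4}$, which are type-2 chords. So I would also want $S_2=\Z_n$ (hence $S_{t-2}=\varnothing$), or else argue via a type-$(t-1)$/type-$1$ combination.

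The main obstacle is exactly this: deciding which member of each pair is full, and showing the resulting switched cycle $H'$ again satisfies the same equality hypotheses. The latter is needed so that the family $\mathcal H$ of all rainbow Hamilton cycles reachable by switches is closed. For closure, note that the entire equality analysis used only that $H$ is a rainbow Hamilton cycle and that no rainbow $C_{n-t}$ exists. So it applies verbatim to any $H\in\mathcal H$, with $S_a(H)$, $X_a(H)$ in place of $S_a$, $X_a$.

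To pin down the full side, I would use the tight crossing condition. Endpoint degrees inside $P_i$ are disjointly partitioned, which forces the forbidden chord $v_iv_{i-t-1}$ (the type-$t$ shortcut, at distance $t+1$) to be absent in all colors. I would try to show that $S_{t-1}=\Z_n$ with $S_1=\varnothing$ leads to a contradiction via Lemma~\ref{lem:crossing} applied to a modified path. For the forbidden-chord hypothesis \eqref{eq:abstract-forbidden}, I would take $q=t+1$, which is even, with $2\le q\le n-2$. The point is that $x_ix_{i+t+1}$ in any color would, combined with the full type-1 support, produce a rainbow $C_{n-t}$ by a released-color argument as in Lemma~\ref{lem:disjoint-shortcuts}. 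Then Lemma~\ref{lem:exchange-closure}, with $n\ge t+3\ge6$ and total $\sum_v\delta(v)\ge n^2/2$, yields that $n$ is even and $G_r=K_{X,Y}$ for all $r$, with $|X|=|Y|=n/2$.
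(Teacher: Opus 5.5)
Your equality analysis is correct and matches the paper. Every pair $(a,t-a)$ has one support empty and the other equal to $\Z_n$ with both boundary incidences, and this holds on every colored rainbow Hamilton cycle. The gap is the step you flag yourself, and the direction you propose there cannot succeed. You want to build the switch from $S_1=\Z_n$ and to refute the case $S_1=\varnothing$, $S_{t-1}=\Z_n$. But that case is exactly the extremal system. Every Hamilton cycle of $K_{X,Y}$ alternates sides, so $x_ix_{i+2}$ is never an edge, while $x_ix_{i+t}$ ($t$ odd) lies in every member. That system satisfies every hypothesis of the lemma, so no contradiction is available; it is $S_1=\Z_n$ that turns out to be impossible. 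Also, re-running the dichotomy on a switched cycle $H'$ only tells you that one side of each pair is full on $H'$. It does not tell you that $S_2(H')$ is the full side, so "the analysis applies verbatim" does not give closure under the type-$2$ switch.

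The missing idea is a single witness chord that selects the type-$2$ side.
\begin{enumerate}
\item Take any $b$ with $S_b(H)=\Z_n$. Reverse $x_0x_1\cdots x_{b+1}x_{b+2}$ to $x_0x_{b+1}x_b\cdots x_1x_{b+2}$, using colors $c_0$ and $c_{b+1}$ on the two new end edges.
\item On the new cycle, the old chord $x_{-1}x_b\in G_{c_{-1}}$ joins positions $-1$ and $2$ in a boundary color. So $S_2(H')\ne\varnothing$, and the dichotomy forces $S_2(H')=\Z_n$ in both colors. This is what makes the switch available.
\item After a switch at $p$, the chord $x_{p-1}x_{p+2}\in G_{c_{p-1}}$ plays the same role, which gives closure.
\end{enumerate}

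Your forbidden-chord step is also wrong as stated. A type-$t$ chord together with a type-$1$ replacement deletes $t+1$ vertices and yields $C_{n-t-1}$, not $C_{n-t}$. The paper instead takes $q=t-1$: a type-$(t-2)$ chord in color $r$, plus a type-$2$ block in the complementary arc (chosen to contain $e_r$ if $e_r$ is retained), gives a rainbow $C_{n-t}$. Your choice $q=t+1$ can be justified differently. The tight crossing rows give $e_j\in G_{j-1}\cap G_j\cap G_{j+1}$, which lets you shift colors along the retained arc to free color $r$, as in the proof of Lemma~\ref{thm:t1}. That is a different argument from the one you sketched.
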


\begin{proof}
\smallskip
\noindent\emph{Exact support structure.}
Write $t=2m+1$.  From \eqref{eq:pair-n},
\[
  \sum_{a=1}^{t-1}|S_a|
  =\sum_{a=1}^{m}\bigl(|S_a|+|S_{t-a}|\bigr)
  \le mn,
\]
whereas \eqref{eq:Sa-lower} gives the reverse inequality.  Hence the total is
$mn$.  Since each of the $m$ summands above is at most $n$, every one is
exactly $n$. For $1\le a\le m$ we have
\begin{equation}
  |S_a|+|S_{t-a}|=n.
\end{equation}
If both sets were nonempty, \eqref{eq:pair-short} would give at most
$t+2\le n-1$, a contradiction.  Hence one of $S_a,S_{t-a}$ is empty
and the other is all of $\Z_n$.
On the other hand,
\[
  n(t-1)
  \le\sum_{a=1}^{t-1}X_a
  \le2\sum_{a=1}^{t-1}|S_a|
  =n(t-1).
\]
Thus
$
 \sum_{a=1}^{t-1}\bigl(2|S_a|-X_a\bigr)=0.
$
Every summand is nonnegative, so
$
  X_a=2|S_a|$ for every $1\le a\le t-1.
$
Consequently, for each $1\le b\le t-1$, either $S_b=\varnothing$ and
no type-$b$ chord occurs in a boundary color, or $S_b=\Z_n$ and every
one of the $2n$ boundary incidences occurs. Equivalently,
\begin{equation}
  v_iv_{i+b+1}\in E(G_i)\cap E(G_{i+b})
  \qquad\forall i\in\Z_n.
\end{equation}
Exactly one of the two chord lengths $b$ and $t-b$ has the second
property.  More generally, the same dichotomy holds for $S_b(H)$ and
$S_{t-b}(H)$ on every colored rainbow Hamilton cycle $H$.  Indeed, globally
permuting the member labels so that the color of $x_ix_{i+1}$ becomes $i$
does not change any system degree or the existence of a rainbow cycle, and
the preceding count then applies verbatim to the cyclic order of $H$.

\smallskip
\noindent\emph{Creating full type-$2$ support.}
First choose a rainbow Hamilton cycle on which every chord joining vertices
three steps apart is present in both boundary colors.  Start with
any rainbow Hamilton cycle $H$ as above and choose
$b\in\{1,\ldots,t-1\}$ for which the second alternative holds.  Thus
\[
  x_ix_{i+b+1}\in E(G_{c_i})\cap E(G_{c_{i+b}})
  \qquad\forall i\in \Z_n.
\]
Replace the segment
$
  x_0x_1\cdots x_{b+1}x_{b+2}
$
by
$
  x_0x_{b+1}x_b\cdots x_1x_{b+2},
$
and color its edges, in order, by
$
  c_0,c_b,c_{b-1},\ldots,c_1,c_{b+1}.
$
The first new boundary edge $x_0x_{b+1}$ is available in color $c_0$, and
the second, $x_1x_{b+2}$, is available in color $c_{b+1}$; the middle
edges are old Hamilton edges traversed backwards.  The new segment thus
uses exactly the distinct colors $c_0,c_1,\ldots,c_{b+1}$, so the
resulting cycle is again a rainbow Hamilton cycle.

Write this new cycle as
\[
  H'=y_0y_1\cdots y_{n-1}y_0,
  \qquad y_iy_{i+1}\text{ colored }d_i,
\]
with the indices chosen so that
$
  y_{-1}=x_{-1}$, $y_0=x_0$,
  $y_1=x_{b+1}$ and $y_2=x_b$.
The edge $y_{-1}y_0=x_{-1}x_0$ is unchanged, so
$d_{-1}=c_{-1}$.  Moreover, applied at $i=-1$ to the second alternative for the original
cycle,  gives
\[
  y_{-1}y_2=x_{-1}x_b\in E(G_{c_{-1}})=E(G_{d_{-1}}).
\]
Hence on $H'$ at least one chord joining vertices three steps apart is
present in one of its two boundary colors.  Applying the conclusion above
to $H'$ with $b=2$, the alternative in which all such chords are absent is
impossible.  Hence every chord joining vertices three steps apart is
present in both boundary colors.  Renaming $H'$ as
\[
  H=x_0x_1\cdots x_{n-1}x_0,
  \qquad x_ix_{i+1}\text{ colored }c_i,
\]
we therefore have
\begin{equation}
  x_ix_{i+3}\in E(G_{c_i})\cap E(G_{c_{i+2}})
  \qquad\forall i\in \Z_n.
  \label{eq:odd-step3}
\end{equation}

\smallskip
\noindent\emph{Closure under the local switch.}
Equation \eqref{eq:odd-step3} allows the following distance-two exchange at every
position $p$:
\begin{equation}
  x_p\,x_{p+1}\,x_{p+2}\,x_{p+3}\,x_{p+4}
  \quad\longmapsto\quad
  x_p\,x_{p+3}\,x_{p+2}\,x_{p+1}\,x_{p+4}.
  \label{eq:odd-switch}
\end{equation}
Indeed, use color $c_p$ on $x_px_{p+3}$ and color $c_{p+3}$ on
$x_{p+1}x_{p+4}$; these choices are legal by
\eqref{eq:odd-step3} applied at $p$ and $p+1$, respectively.  Keep colors
$c_{p+2}$ and $c_{p+1}$ on the two middle edges, now traversed backwards.
The four colors are distinct, so the result is another rainbow Hamilton
cycle, and the vertices in positions $p+1$ and $p+3$ have been exchanged.

We next show that the same exchange remains available after it is performed.
Let the new Hamilton cycle be
\[
  H'=y_0y_1\cdots y_{n-1}y_0,
  \qquad y_iy_{i+1}\text{ colored }d_i,
\]
where outside the displayed five-vertex segment the positions are
unchanged.  In particular,
$
  y_{p-1}=x_{p-1}$, $y_p=x_p$,
  $y_{p+1}=x_{p+3}$, $y_{p+2}=x_{p+2}$,
and the unchanged edge $y_{p-1}y_p$ has color
$d_{p-1}=c_{p-1}$.  Before the exchange, \eqref{eq:odd-step3} gives
\[
  y_{p-1}y_{p+2}=x_{p-1}x_{p+2}\in E(G_{c_{p-1}})
  =E(G_{d_{p-1}}).
\]
Hence $H'$ again has a chord joining vertices three steps apart in a
boundary color.  Applying the conclusion preceding the construction to
$H'$ with $b=2$ again rules out the empty alternative and recovers
\eqref{eq:odd-step3} with the new vertices and colors.  Consequently the
exchange \eqref{eq:odd-switch} may be iterated.

\smallskip
\noindent\emph{Excluding complementary chords.}
We claim that on every Hamilton cycle obtained in this way,
\begin{equation}
  x_ix_{i+t-1}\notin E(G_r)
  \qquad\forall i,r\in\Z_n.
  \label{eq:odd-forbidden}
\end{equation}
Since all chords joining vertices three steps apart occur in both boundary
colors, the complementary chords that delete $t-2$ vertices occur in
neither boundary color.  Suppose nevertheless that
$x_ix_{i+t-1}\in E(G_r)$ for some $i,r$, and let $x_qx_{q+1}$ be the
Hamilton edge of color $r$.  The chord $x_ix_{i+t-1}$ replaces the Hamilton-edge block
$
  B=B_H(i,t-2),
$
which contains $t-1$ edges.  Its complement is a Hamilton arc of
$n-t+1\ge4$ edges.  Choose a block $J$ of three consecutive Hamilton
edges entirely inside this complementary arc, so that $B$ and $J$ are
edge-disjoint.  If $e_q(H)\notin B$, choose $J$ to contain $e_q(H)$. This is
possible because the complementary arc has at least four edges.  By \eqref{eq:odd-step3}, the chord replacing $J$ is present in
both boundary colors of $J$.

Assign color $r$ to $x_ix_{i+t-1}$.  If $e_q(H)\in B$, color $r$ is the color of a deleted edge of $B$; if
$e_q(H)\notin B$, it is the color of a deleted edge of $J$.  Thus $r$ belongs
to the released-color set of $B\cup J$.
The two boundary colors of $J$ are distinct colors of deleted edges of
$J$, so at least one of them is different from $r$.  Use such a color on
the chord replacing $J$.  The two new chords now have distinct legal
colors, both released by the two edge-disjoint Hamilton-edge blocks $B$
and $J$.
Since $n-(t-2)-2=n-t\ge3$, Lemma~\ref{lem:disjoint-shortcuts} gives a
rainbow cycle
of length
$
  n-(t-2)-2=n-t,
$
a contradiction.  This proves \eqref{eq:odd-forbidden}.

Let $\mathcal H_t$ be the collection of colored rainbow Hamilton cycles
obtainable from the present cycle by a finite sequence of switches
\eqref{eq:odd-switch}.  The preceding closure argument shows that
$\mathcal H_t$ is closed under every switch
\eqref{eq:abstract-switch}, and \eqref{eq:odd-forbidden} holds on every
member of $\mathcal H_t$.  Since $q=t-1$ is even and
$2\le q\le n-4$, Lemma~\ref{lem:exchange-closure} together with
\eqref{eq:pancy-global} gives an even order $n$ and a balanced
partition $V=X\sqcup Y$ such that
$
  G_r=K_{X,Y}$ for all $r$.
\end{proof}

\subsection{The boundary length \texorpdfstring{$C_{n-1}$}{C(n-1)}}

\begin{lemma}[Almost-spanning obstruction]
\label{thm:t1}
Let $n\ge5$.  If there is no rainbow $C_{n-1}$, then $n$ is even and all
members of the system are the same balanced complete bipartite graph.
\end{lemma}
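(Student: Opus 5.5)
The plan is to run the crossing count of Section~\ref{sec:baseline} with $t=1$. Equality should then be forced everywhere, and the resulting rigid structure should be fed into Lemma~\ref{lem:exchange-closure} with $q=2$.

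\emph{Counting.} For each $i\in\Z_n$, take the rainbow path $P_i=v_iv_{i+1}\cdots v_{i-2}$ on $n-1$ vertices. The colors $i-1$ and $i-2$ are unused on $P_i$. Since there is no rainbow $C_{n-1}$, Lemma~\ref{lem:crossing} gives
\[
d_{G_{i-1}}(v_i,P_i)+d_{G_{i-2}}(v_{i-2},P_i)\le n-2 .
\]
The only exterior vertex is $v_{i-1}$, and both exterior incidences are the Hamilton edges $v_{i-1}v_i\in G_{i-1}$ and $v_{i-2}v_{i-1}\in G_{i-2}$. Summing over $i$ therefore gives
\[
2\sum_v\delta(v)-2n\le n(n-2),
\]
that is, $\sum_v\delta(v)\le n^2/2$. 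Together with \eqref{eq:pancy-global}, every inequality in the chain is tight. Three consequences follow.
\begin{enumerate}
\item $d_{G_{i-1}}(v_i)=d_{G_{i-2}}(v_{i-2})=\delta(\cdot)$ for the vertices concerned, and $\sum_v\delta(v)=n^2/2$.
\item In Lemma~\ref{lem:crossing}, the sets $A$ and $B$ partition $[n-2]$ for the path $P_i$.
\item The same statements hold for every colored rainbow Hamilton cycle $H$. As in the proof of Lemma~\ref{thm:odd}, we relabel the members so that the color of $x_ix_{i+1}$ is $i$.
\end{enumerate}

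\emph{Rigidity.} Next I would extract local chord information from the partition property. Work on $P=x_1\cdots x_\ell$ with free colors $a,b$, and consider $j=1$ and $j=\ell-1$. The chord $x_1x_\ell$ is excluded, and exactly one of $x_1x_{j+1}\in G_a$, $x_jx_\ell\in G_b$ holds for each $j$. I would use this to show that every chord $x_ix_{i+2}$ of every $H$ lies in no $G_r$, which is \eqref{eq:abstract-forbidden} with $q=2$. If $r$ is one of the two colors of the skipped edges $x_ix_{i+1},x_{i+1}x_{i+2}$, we obtain a rainbow $C_{n-1}$ directly. Otherwise $r=c_q$ for some Hamilton edge $e_q$ elsewhere. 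In that case I would use a Pósa-type rotation of the path obtained by deleting $e_q$ to re-free that color while keeping $x_i,x_{i+2}$ as a removable pattern. The rotation is the move: given $x_1x_{j+1}\in G_a$, reverse $x_1\cdots x_j$ to obtain a new rainbow path with endpoints $x_j,x_\ell$ and free colors $c_j,b$. Each rotated path again satisfies the tight partition condition, so the forcing propagates.

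\emph{Switch.} Second, I would show the distance-two switch \eqref{eq:abstract-switch} is always available and stays within the class of rainbow Hamilton cycles. This needs $x_px_{p+3}\in G_{c_p}$ and $x_{p+1}x_{p+4}\in G_{c_{p+3}}$. The partition property should supply this. On the path $P=x_{p+1}x_{p+2}\cdots x_p$ obtained by deleting $e_p$, the chord $x_{p+1}x_{p+3}$ is forbidden by the previous step. The partition therefore forces the complementary incidence at the other endpoint, giving an adjacency of $x_p$ in the free color at distance three. A symmetric argument handles $x_{p+1}x_{p+4}$. Once both conditions hold on every member of the closure $\mathcal H$, and since $n\ge5$, Lemma~\ref{lem:exchange-closure} with $q=2$ and $\sum_v\delta(v)\ge n^2/2$ yields that $n$ is even and $G_r=K_{X,Y}$ for all $r$ with $|X|=|Y|=n/2$.

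\emph{Main obstacle.} I expect the main difficulty to be the first rigidity step: excluding $x_ix_{i+2}$ in a color $r$ that is not released by the two skipped edges. Here the released-color constraint genuinely bites. I would first try to free $r$ by choosing the Hamilton path that deletes the edge of color $r$ and then using the tight crossing partition on that path. The delicate part is checking that the free colors at the endpoints can be matched to the chords so that the resulting cycle is rainbow; everything after that is bookkeeping through Lemma~\ref{lem:exchange-closure}.
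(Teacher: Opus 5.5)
\paragraph{Verdict: genuine gap in the rigidity step.}
Your counting step is correct and is the paper's. The proof breaks exactly where you expect: you do not exclude a distance-two chord $x_ix_{i+2}\in G_r$ when $r=c_q$ is not one of the released colors $c_i,c_{i+1}$. The rotation plan does not close this. Your claim that each rotated path again satisfies the tight partition condition is unjustified. Tightness came only from summing the $n$ rows of a fixed rainbow Hamilton cycle. Each such row is that cycle minus one vertex, and both of its endpoints are joined to the deleted vertex in the two free colors. A rotated path $x_j\cdots x_1x_{j+1}\cdots x_\ell$ has a new endpoint $x_j$ with no such property, and nothing gives $\delta(x_j)+\delta(x_\ell)=n$. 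So the forcing cannot be propagated this way.

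\paragraph{The missing idea.}
No rotation is needed. Delete $x_k$ and write the row as $y_1\cdots y_{n-1}$ with $y_j=x_{k+j}$; its free colors are $c_k$ at $y_1$ and $c_{k-1}$ at $y_{n-1}$. In cell $j=1$ the competitor of $y_1y_2\in G_{c_k}$ is the closing edge $y_1y_{n-1}\in G_{c_{k-1}}$, which is impossible. Tightness therefore gives $e_{k+1}\in G_{c_k}$, so every Hamilton edge satisfies $e_j\in G_{c_{j-1}}\cap G_{c_j}$.

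Now suppose $x_ix_{i+2}\in G_{c_q}$ with $q\notin\{i,i+1\}$. Delete $x_{i+1}$ and walk forward along the retained arc from $e_{i+2}$ to $e_q$, recoloring each $e_j$ with $c_{j-1}$. The first new color $c_{i+1}$ was released by the deletion. Each later new color was just vacated by the preceding edge. At the end $c_q$ is free for the chord, and you have a rainbow $C_{n-1}$. Both the distance-three forcing and the forbidden-pair hypothesis of Lemma~\ref{lem:exchange-closure} with $q=2$ depend on this exclusion.

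\paragraph{The switch step uses the wrong row.}
Deleting the edge $e_p$ gives a Hamilton path on all $n$ vertices with one free color. Lemma~\ref{lem:crossing} does not apply to it, since a rainbow $C_n$ exists. Moreover, on that path the cell-$2$ competitor of $x_{p+1}x_{p+3}$ is $x_{p+2}x_p$, again a distance-two chord, not a distance-three one.

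The correct row deletes the vertex $x_{p+1}$. On $x_{p+2}\cdots x_p$ with free colors $c_{p+1},c_p$, cell $j=2$ pits the forbidden $x_{p+2}x_{p+4}$ against $x_{p+3}x_p$, forcing $x_px_{p+3}\in G_{c_p}$. Deleting $x_{p+3}$ and using cell $j=n-3$ forces $x_{p+1}x_{p+4}\in G_{c_{p+3}}$.

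For $n=5$ these two cells coincide and both candidates are distance-two chords, so the partition itself is contradicted. More simply, for odd $n$ your count already gives $\sum_v\delta(v)\le n^2/2<\lceil n^2/2\rceil$, a contradiction.

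With these two repairs your outline becomes the paper's proof.
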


\begin{proof}
\smallskip
\noindent\emph{Exact crossing rows.}
Keep the Hamilton cycle \eqref{eq:pancy-H}.  Fix $k\in\Z_n$ and delete
$v_k$.  Write the remaining Hamilton path as
$
  P_k=x_1\cdots x_{n-1}$ with
   $x_j=v_{k+j}$.
Its unused colors are $k$ and $k-1$, so Lemma~\ref{lem:crossing} gives
\begin{equation}
 d_{G_k}(v_{k+1},V\setminus\{v_k\})
 +d_{G_{k-1}}(v_{k-1},V\setminus\{v_k\})
 \le n-2.
 \label{eq:t1-cross}
\end{equation}
The only vertex outside $P_k$ is $v_k$, and both endpoint--$v_k$ edges
are Hamilton edges in the relevant colors.  Therefore
\[
\begin{aligned}
  \delta(v_{k+1})+\delta(v_{k-1})
  &\le d_{G_k}(v_{k+1})+d_{G_{k-1}}(v_{k-1})\\
  &=d_{G_k}(v_{k+1},P_k)+d_{G_{k-1}}(v_{k-1},P_k)+2\le n.
\end{aligned}
\]
Summing the last display over $k$ gives
$
 2\sum_{v\in V}\delta(v)\le n^2.
$
Together with \eqref{eq:pancy-global}, equality holds.  Since each of the
$n$ displayed inequalities has left-hand side at most $n$, each one is tight.
Moreover, equality in its two-step chain forces both the layer-degree
comparison and \eqref{eq:t1-cross} to be tight for every $k$.

As in the proof of Lemma~\ref{lem:crossing}, put
\[
 A_k=\{j\in[n-2]:x_1x_{j+1}\in E(G_k)\},
 \qquad
 B_k=\{j\in[n-2]:x_jx_{n-1}\in E(G_{k-1})\}.
\]
The crossing argument gives $A_k\cap B_k=\varnothing$, while equality in
\eqref{eq:t1-cross} gives
\begin{equation}
  A_k\sqcup B_k=[n-2].
  \label{eq:t1-complement}
\end{equation}
The closing edge $x_1x_{n-1}$ belongs to neither unused color, so
$1\notin B_k$ and $1\in A_k$.  Reindexing gives
\begin{equation}
  v_iv_{i+1}\in E(G_{i-1})\cap E(G_i)
  \qquad\forall i.
  \label{eq:doubleH}
\end{equation}

\smallskip
\noindent\emph{Distance two is forbidden; distance three is forced.}
We claim that
\begin{equation}
  v_iv_{i+2}\notin E(G_r)
  \qquad\forall i,r.
  \label{eq:t1-step2}
\end{equation}
Suppose otherwise that $v_iv_{i+2}\in E(G_r)$.  Write
$e_j=v_jv_{j+1}$, so that $e_j$ has color $j$.  Delete $e_i$ and
$e_{i+1}$, the remaining Hamilton edges together with the chord
$v_iv_{i+2}$ form a cycle on $n-1$ vertices.  If
$r\in\{i,i+1\}$, color the chord with $r$ and keep every remaining
Hamilton edge in its original color. Then we  obtain a rainbow $C_{n-1}$.

Assume therefore that $r\notin\{i,i+1\}$.  The edge $e_r$ lies on the
retained Hamilton arc from $v_{i+2}$ to $v_i$.  Starting with $e_{i+2}$
and proceeding along this arc through $e_r$, recolor every encountered
edge $e_j$ with color $j-1$. Here the indices are read in this forward
cyclic order.  This is legal by \eqref{eq:doubleH}, since
$
  e_j=v_jv_{j+1}\in E(G_{j-1})\cap E(G_j).
$
The first new color, $i+1$, was released when $e_{i+1}$ was deleted.  For
each subsequent edge $e_j$, its new color $j-1$ was the old color of the
preceding edge $e_{j-1}$, which has already been recolored.  Thus the
colors on all retained edges remain distinct.  Finally, $e_r$ no longer
uses its old color $r$, so color $r$ is free.  Assigning color $r$ to
$v_iv_{i+2}$ gives a rainbow $C_{n-1}$, again a contradiction.  Hence
\eqref{eq:t1-step2} holds.

If $n=5$, then $[n-2]=[3]$.  At $j=2$, both crossing candidates
$x_1x_3$ and $x_2x_4$ are forbidden by \eqref{eq:t1-step2}.  Thus
$2\notin A_k\cup B_k$, contradicting \eqref{eq:t1-complement}.  Hence the
assumed obstruction does not occur when $n=5$, and from now on $n\ge6$.
At $j=2$, the edge $x_1x_3$ is forbidden by \eqref{eq:t1-step2}, so
$2\in B_k$.  Hence
$
  x_2x_{n-1}=v_{k+2}v_{k-1}\in E(G_{k-1}).
$
Putting $i=k-1$ gives $v_iv_{i+3}\in E(G_i)$.  At $j=n-3$, the edge
$x_{n-3}x_{n-1}$ is forbidden, so $n-3\in A_k$.  Hence
$
  x_1x_{n-2}=v_{k+1}v_{k-2}\in E(G_k).
$
Putting $i=k-2$ gives $v_iv_{i+3}\in E(G_{i+2})$.  Since $k$ is
arbitrary, the two conclusions together yield
\begin{equation}
  v_iv_{i+3}\in E(G_i)\cap E(G_{i+2})
  \qquad\forall i.
\end{equation}

\smallskip
\noindent\emph{Exchange closure.}
These conclusions depend only on the current colored Hamilton cycle and the absence of a rainbow $C_{n-1}$.  Hence, for every rainbow Hamilton cycle
$H=x_0x_1\cdots x_{n-1}x_0$,
  $x_jx_{j+1}$ colored $c_j$,
we have
\begin{align}
  x_jx_{j+2}&\notin E(G_r)
  &&\forall j,r,
  \label{eq:t1-step2-arbitrary}\\
  x_jx_{j+3}&\in E(G_{c_j})\cap E(G_{c_{j+2}})
  &&\forall j.
  \label{eq:t1-step3-arbitrary}
\end{align}
Thus, for every $p$, the replacement
\[
  x_p\,x_{p+1}\,x_{p+2}\,x_{p+3}\,x_{p+4}
  \quad\longmapsto
  x_p\,x_{p+3}\,x_{p+2}\,x_{p+1}\,x_{p+4}
\]
gives another rainbow Hamilton cycle: use colors $c_p$ and $c_{p+3}$ on
the two new boundary edges and keep colors $c_{p+2}$ and $c_{p+1}$ on
the reversed middle edges.  Since
\eqref{eq:t1-step2-arbitrary}--\eqref{eq:t1-step3-arbitrary} hold for every
rainbow Hamilton cycle, this exchange of the vertices in positions
$p+1$ and $p+3$ may be repeated.

Let $\mathcal H_1$ be the collection of colored rainbow Hamilton cycles
obtainable from the fixed Hamilton cycle by these switches.  It is closed
under \eqref{eq:abstract-switch}, and
\eqref{eq:t1-step2-arbitrary} gives
\eqref{eq:abstract-forbidden} with $q=2$ on every member of
$\mathcal H_1$.  Lemma~\ref{lem:exchange-closure}, together with
\eqref{eq:pancy-global}, therefore yields an even order $n$ and a
balanced partition $V=X\sqcup Y$ for which
$
  G_r=K_{X,Y}$ for all $r$.
\end{proof}

\subsection{Completion of the baseline argument}

\begin{proof}[Proof of Proposition~\ref{prop:pancy-baseline}]
We first handle $n\le5$.  For $n=3$, the Hamilton cycle is already a
rainbow $C_3$.  Let $n=4$ and write it as
\[
  v_0v_1v_2v_3v_0,
  \qquad v_iv_{i+1}\in E(G_i).
\]
If there is no rainbow triangle, neither diagonal belongs to any member of
the system.  Indeed, suppose for example that $v_0v_2\in E(G_r)$.  The
two Hamilton $v_0$--$v_2$ arcs use the disjoint color pairs $\{0,1\}$ and
$\{2,3\}$; one of these pairs avoids $r$, and together with $v_0v_2$
gives a rainbow triangle.  Hence every $G_r$ is a subgraph of
$
  K_{\{v_0,v_2\},\{v_1,v_3\}}.
$
Thus every degree in every $G_r$ is at most $2$, and hence
$\delta(v)\le2$ for every $v$.  Since the total degree sum is at least
$8$, we must have $\delta(v)=2$ for every $v$.  Therefore
$d_{G_r}(v)\ge2$ for every $v,r$, while the bipartite containment gives
the reverse inequality.  Hence every $G_r$ is this $K_{2,2}$.
For $n=5$, Lemma~\ref{thm:even} with $t=2$ gives a rainbow $C_3$, while
Lemma~\ref{thm:t1} shows that a rainbow $C_4$ cannot be missing.  Together
with the Hamilton $C_5$, the system is rainbow pancyclic.

Assume now that $n\ge6$.  The Hamilton cycle gives a rainbow $C_n$.  If a
rainbow $C_{n-1}$ is missing, Lemma~\ref{thm:t1} gives the common
balanced complete bipartite system.
Let $3\le\ell\le n-2$ and put $t=n-\ell$.  Then $2\le t\le n-3$.  If
$t$ is even, Lemma~\ref{thm:even} gives a rainbow $C_\ell$.  If $t$ is
odd, Lemma~\ref{thm:odd} shows that the absence of a rainbow $C_\ell$
forces the common balanced complete bipartite system.

Thus every missing length forces the same common balanced bipartite
system.  Otherwise all lengths $\ell\in\{3,\ldots,n\}$ occur and the
system is rainbow pancyclic.  The two alternatives are disjoint because the
common bipartite system has no odd cycle.
\end{proof}

\section{Stability at the lower integer threshold}
\label{sec:boundary}

We now assume equality at the lower integer level.  The counting argument
from Section~\ref{sec:baseline} remains valid, but with total slack equal to
one when $n$ is odd and two when $n$ is even.  We record this slack explicitly
and show that the distance-two switching argument still applies.  Throughout
this section a rainbow Hamilton cycle is fixed and
\begin{equation}
  \sum_{v\in V}\delta(v)
  =B_n:=\left\lceil\frac{n^2}{2}\right\rceil-1.
  \label{eq:boundary-layer}
\end{equation}
Equivalently,
\begin{equation}
 n^2-2B_n=
 \begin{cases}
  1,& n\text{ odd},\\
  2,& n\text{ even}.
 \end{cases}
 \label{eq:boundary-gap}
\end{equation}
For any fixed colored rainbow Hamilton cycle, we may apply a global
permutation to the member labels and normalize it as
\[
 H=x_0x_1\cdots x_{n-1}x_0,
 \qquad x_ix_{i+1}\in E(G_i).
\]
Such a permutation leaves every system degree unchanged.  Whenever the
defect identity below is applied to a later Hamilton cycle, all row defects,
shortcut supports, incidence counts, and defect terms are recomputed after
this normalization.  Thus the total defect budget is invariant, although
the locations of the defects may move under switching.

\subsection{The defect identity}

For the crossing row associated with
$
 P_i=x_ix_{i+1}\cdots x_{i-t-1},
$
let
\[
 q_i=(n-t-1)-
 \bigl(d_{G_{i-1}}(x_i,P_i)+
       d_{G_{i-t-1}}(x_{i-t-1},P_i)\bigr)\ge0.
\]
Thus $q_i$ measures the deficit in the $i$th endpoint-crossing row.
When $t=2m+1$ is odd, four nonnegative quantities account for all slack:
\begin{align*}
 &D_{\rm deg}
 :=\sum_i\bigl(d_{G_{i-1}}(x_i)-\delta(x_i)
                 +d_{G_i}(x_i)-\delta(x_i)\bigr),\quad
 &&D_{\rm cross}:=\sum_iq_i,\\
 &D_{\rm supp}:=mn-\sum_{a=1}^{2m}|S_a|,
 &&D_{\rm col}:=2\sum_{a=1}^{2m}|S_a|-\sum_{a=1}^{2m}X_a.
\end{align*}
Here $D_{\rm supp}$ measures the loss from the maximal complementary-support
total $n$ for each pair $(a,t-a)$, while $D_{\rm col}$ counts missing
boundary-color incidences at supported positions.  A one-unit loss in
complementary support contributes two units to the identity below.

\begin{lemma}[Shortcut count and defect identity]
\label{lem:lower-shortcut-count}
Fix $2\le t\le n-3$ and suppose that there is no rainbow
$C_{n-t}$.  Define $S_a$ and $X_a$ as in \eqref{eq:Sa} and
\eqref{eq:Xa}.  Then
\begin{equation}
 \sum_{a=1}^{t-1}X_a
 \ge n(t-1)-(n^2-2B_n).
 \label{eq:lower-shortcut-count}
\end{equation}
Equality in \eqref{eq:lower-shortcut-count} can occur only when every
crossing row is tight and the two layer degrees used at each vertex are
equal to its system degree.
If $t=2m+1$ is odd, then the slack decomposes exactly as
\begin{equation}
 D_{\rm deg}+D_{\rm cross}+2D_{\rm supp}+D_{\rm col}
 =n^2-2B_n.
 \label{eq:defect-identity}
\end{equation}
In particular,
\begin{equation}
 2D_{\rm supp}+D_{\rm col}+D_{\rm cross}
 \le n^2-2B_n.
 \label{eq:odd-lower-count}
\end{equation}
Finally, if $q_i=0$, then
\begin{equation}
 x_ix_{i+1}\in E(G_{i-1}),
 \qquad
 x_{i-t-2}x_{i-t-1}\in E(G_{i-t-1}),
 \label{eq:qzero-end-edges}
\end{equation}
and hence, for every $j$,
\begin{equation}
 q_j=0\Longrightarrow x_jx_{j+1}\in E(G_{j-1}),
 \qquad
 q_{j+t+2}=0\Longrightarrow x_jx_{j+1}\in E(G_{j+1}).
 \label{eq:qzero-adjacent-colors}
\end{equation}
\end{lemma}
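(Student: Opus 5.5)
The plan is to redo the endpoint-crossing count of Section~\ref{sec:baseline} as an exact identity instead of an inequality, carrying every loss along as a named nonnegative term. Start from the definition of $q_i$ and sum over $i\in\Z_n$:
\[
 \sum_i q_i=n(n-t-1)-\sum_i\bigl(d_{G_{i-1}}(x_i,P_i)+d_{G_{i-t-1}}(x_{i-t-1},P_i)\bigr).
\]
By the exterior-incidence bookkeeping already carried out before \eqref{eq:Sa-lower}, the last sum equals
\[
\sum_i\bigl(d_{G_{i-1}}(x_i)+d_{G_{i-t-1}}(x_{i-t-1})\bigr)-2n-\sum_{a=1}^{t-1}X_a .
\]
Reindex the second degree term by $j=i-t-1$, so that it becomes $\sum_j d_{G_j}(x_j)$. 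The full-degree sum is then exactly $2\sum_v\delta(v)+D_{\rm deg}=2B_n+D_{\rm deg}$.

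Substituting gives, for every $t$,
\[
 \sum_{a=1}^{t-1}X_a=n(t-1)-(n^2-2B_n)+D_{\rm deg}+D_{\rm cross}.
\]
Since $D_{\rm deg}$ and $D_{\rm cross}$ are nonnegative, this proves \eqref{eq:lower-shortcut-count}. It also shows that equality forces $D_{\rm deg}=D_{\rm cross}=0$. That is, every row is tight ($q_i=0$), and $d_{G_{i-1}}(x_i)=d_{G_i}(x_i)=\delta(x_i)$ for all $i$.

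For odd $t=2m+1$, rewrite $\sum_a X_a$ using the definitions of $D_{\rm col}$ and $D_{\rm supp}$:
\[
 \sum_a X_a=2\sum_a|S_a|-D_{\rm col}=2mn-2D_{\rm supp}-D_{\rm col}.
\]
Since $n(t-1)=2mn$, inserting this into the identity above and rearranging gives \eqref{eq:defect-identity} immediately. Nonnegativity of $D_{\rm deg}$ is clear from the definition of $\delta$. For $D_{\rm supp}$, nonnegativity comes from \eqref{eq:pair-n} summed over the pairs $(a,t-a)$, $1\le a\le m$; that bound only used the absence of $C_{n-t}$, not the density hypothesis. For $D_{\rm col}$ it comes from \eqref{eq:X-support}. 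Dropping $D_{\rm deg}$ then yields \eqref{eq:odd-lower-count}.

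For the last claim, unpack $q_i=0$ through the proof of Lemma~\ref{lem:crossing} applied to $P_i$, which has $\ell=n-t$ vertices and unused colors $i-1$ and $i-t-1$. There $A$ and $B$ are disjoint subsets of $[\ell-1]$, and $q_i=0$ means $A\sqcup B=[\ell-1]$. The closing edge $x_ix_{i-t-1}$ is in neither color, so $1\notin B$, which forces $1\in A$; this is the edge $x_ix_{i+1}\in E(G_{i-1})$. Symmetrically, $\ell-1\notin A$, since otherwise it would again be the closing edge, so $\ell-1\in B$; this is $x_{i-t-2}x_{i-t-1}\in E(G_{i-t-1})$. This argument is the same as the $t=1$ argument around \eqref{eq:t1-complement}. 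Finally, \eqref{eq:qzero-adjacent-colors} follows by substitution. Take $i=j$ in the first relation, and take $i=j+t+2$ in the second, which makes $x_{i-t-2}x_{i-t-1}=x_jx_{j+1}$ with color $i-t-1=j+1$.

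I expect the only delicate step to be the reindexing that turns the two degree sums into $2\sum_v\delta(v)+D_{\rm deg}$. One has to check that both reindexed sums run over the same vertices $x_j$, using colors $j-1$ and $j$, so that $D_{\rm deg}$ is exactly the defect as defined. The $-2n$ Hamilton-edge exterior term also needs to be tracked carefully against the constant $n(n-t-1)$.
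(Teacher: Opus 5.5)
Your proposal is correct and follows the paper's proof: the exact crossing identity $\sum_a X_a=n(t-1)-(n^2-2B_n)+D_{\rm deg}+D_{\rm cross}$, the substitution of $D_{\rm supp}$ and $D_{\rm col}$ for odd $t$, and the end cells of a tight crossing row are exactly what the paper uses. You simply spell out the reindexing of the two degree sums and the nonnegativity sources, which the paper leaves implicit.
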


\begin{proof}
Before replacing layer degrees by system degrees, the counting argument
from Section~\ref{sec:baseline} gives the exact equality
\begin{align*}
 \sum_{a=1}^{t-1}X_a
 =n(t-1)-(n^2-2B_n)+D_{\rm deg}+D_{\rm cross},
\end{align*}
where the last two terms are nonnegative.  This proves
\eqref{eq:lower-shortcut-count} and its equality statement.

Now let $t=2m+1$.  Since
$
 2mn= n(t-1),
$
rearranging the preceding equality yields
\[
 n^2-2B_n
 =D_{\rm deg}+D_{\rm cross}
  +2\left(mn-\sum_{a=1}^{2m}|S_a|\right)
  +\left(2\sum_{a=1}^{2m}|S_a|-\sum_{a=1}^{2m}X_a\right),
\]
which is exactly \eqref{eq:defect-identity}.  The quantities
$D_{\rm supp}$ and $D_{\rm col}$ are nonnegative by
\eqref{eq:pair-n} and \eqref{eq:X-support}, respectively.  Dropping
$D_{\rm deg}$ gives \eqref{eq:odd-lower-count}.

If $q_i=0$, equality holds in the Path Crossing Lemma for $P_i$.  Its
two crossing sets partition all admissible positions.  The first and
last positions then force the two end Hamilton edges into the opposite
unused colors, giving \eqref{eq:qzero-end-edges}.  Reindexing gives
\eqref{eq:qzero-adjacent-colors}.
\end{proof}

For odd $t$, \eqref{eq:defect-identity} is the stability form of the exact
support equality from Section~\ref{sec:baseline}.  The total defect budget is
$1$ when $n$ is odd and $2$ when $n$ is even.  Thus a one-unit loss from
maximal complementary support already consumes two units, while a missing
boundary-color incidence or one unit of crossing deficit consumes one.  The later switching argument uses
only this budget accounting, together with the fact that the degree defect
$D_{\rm deg}$ can only reduce the room available for all other failures.

\subsection{The boundary length \texorpdfstring{$C_3$}{C3}}

Triangles are most efficiently handled through the following weighted
form of Mantel's theorem.

\begin{lemma}[Four-layer weighted Mantel lemma]
\label{lem:four-layer-mantel}
Let $F_1,F_2,F_3,F_4$ be graphs on the same $n$-vertex set.  If there is
no triangle whose three edges can be assigned three distinct labels from
$\{1,2,3,4\}$, then
\[
 \sum_{j=1}^4 e(F_j)\le4\left\lfloor\frac{n^2}{4}\right\rfloor.
\]
Equality holds if and only if there is a balanced-as-possible bipartition
$V=A\sqcup B$ such that
$
 F_1=F_2=F_3=F_4=K_{A,B}.
$
\end{lemma}

\begin{proof}
For each pair $e$ put
\[
 w(e)=|\{j:e\in E(F_j)\}|\in\{0,1,2,3,4\}.
\]
If a triangle has edge weights  $a\le b\le c$
with $a\ge1$, $b\ge2$, and $c\ge3$, then the three actual color sets
have cardinalities at least $1,2,3$.  Hall's theorem therefore gives
three distinct representatives, contrary to the hypothesis.  Thus no
triangle has such a weight pattern.

We first prove an auxiliary bound.  If all edge weights are at most
$3$ and $n\ge3$, then
\begin{equation}
 W\le n(n-1).
 \label{eq:max-three-bound}
\end{equation}
For $n=3,4$ the bound is explicit.  If all weights are at most
$2$, then $W\le2\binom n2=n(n-1)$.  Otherwise choose an edge $xy$ of
weight $3$.  For every other vertex $z$ the forbidden-pattern condition
forces $w(xz)+w(yz)\le3$. If $n=4$, then the only edge disjoint from $xy$
also has weight at most $3$.  Thus $W\le3+3=6$ for $n=3$ and
$W\le3+2\cdot3+3=12$ for $n=4$.  For $n\ge5$, argue by induction.  Again the case of maximum weight at most
$2$ is immediate.
If $w(xy)=3$, then
$
 w(xz)+w(yz)\le3\ (z\ne x,y),
$
and the induction hypothesis on $V\setminus\{x,y\}$ gives
\[
 W\le (n-2)(n-3)+3(n-2)+3
   =n^2-2n+3\le n(n-1).
\]

We prove by induction on $n$ that
\begin{equation}
 W:=\sum_e w(e)\le4\left\lfloor\frac{n^2}{4}\right\rfloor.
 \label{eq:weighted-mantel}
\end{equation}
For $n\le2$, both the inequality and the equality description are immediate.
For $n\ge3$, \eqref{eq:max-three-bound} shows that equality in
\eqref{eq:weighted-mantel} is impossible when the maximum edge weight is
at most $3$.  Hence, for the remaining case and for every equality case,
choose an edge $xy$ of weight $4$.

For such an edge, the same forbidden-pattern condition gives
$
 w(xz)+w(yz)\le4$\
 $(z\ne x,y).$
Therefore
\begin{align*}
 W
 &\le4\left\lfloor\frac{(n-2)^2}{4}\right\rfloor
    +4(n-2)+4
  =4\left\lfloor\frac{n^2}{4}\right\rfloor.
\end{align*}
If equality holds, equality holds on $V\setminus\{x,y\}$ and
$w(xz)+w(yz)=4$ for every $z$.  If both terms were positive, then the forbidden-pattern condition with
$w(xy)=4$ would force both of them to equal $1$, contradicting that
their sum is $4$.  Hence equality forces
\[
 \{w(xz),w(yz)\}=\{4,0\}.
\]
By induction, the remaining vertices have a balanced-as-possible
bipartition $A'\sqcup B'$ with weight $4$ exactly on its cross edges.
The weight-$4$ neighbors of $x$ cannot meet both $A'$ and $B'$, for
otherwise they form with $x$ a triangle of three weight-$4$ edges; the
same is true for $y$.  Since for every remaining vertex $z$ exactly one
of $xz,yz$ has weight $4$, the two weight-$4$ neighborhoods partition
$V\setminus\{x,y\}$.  Each is contained in one side of $A'\sqcup B'$, so
complementarity forces them to be the two whole sides (with the evident
interpretation when one side is empty).  Hence, after possibly
interchanging $A'$ and $B'$, $x$ is joined with weight $4$ to all of
$B'$ and to none of $A'$, while $y$ is joined with weight $4$ to all of
$A'$ and to none of $B'$.  Together with $w(xy)=4$, this gives the balanced bipartition
$(A'\cup\{x\})\sqcup(B'\cup\{y\})$ (up to interchanging $x$ and
$y$).  Thus the weight-$4$ graph is $T_2(n)$.  Its
$\lfloor n^2/4\rfloor$ cross edges already contribute
$4\lfloor n^2/4\rfloor=W$, so every remaining pair has weight $0$.
Hence each $F_j$ is exactly this same copy of $T_2(n)$, proving the
equality statement.
\end{proof}

\begin{lemma}[Rainbow triangle]
\label{lem:boundary-triangle}
Under the standing assumptions of this section, the system contains a rainbow triangle.
\end{lemma}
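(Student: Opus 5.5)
We plan to prove the statement by a direct application of Lemma~\ref{lem:four-layer-mantel} to four members at a time. The hypothesis \eqref{eq:boundary-layer} and the Hamilton cycle are used only at the very end, to exclude the equality case. The case $n=3$ is immediate, since the rainbow Hamilton cycle is itself a rainbow triangle. So assume $n\ge4$, and suppose for contradiction that there is no rainbow triangle.

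\emph{Step 1: an edge lower bound for every member.} For each $i$,
\[
 2e(G_i)=\sum_v d_{G_i}(v)\ge\sum_v\delta(v)=B_n .
\]
If $n$ is even, $B_n=n^2/2-1$, so $e(G_i)\ge n^2/4-\tfrac12$. Since $n^2/4$ is an integer, integrality gives $e(G_i)\ge n^2/4=\lfloor n^2/4\rfloor$. If $n$ is odd, $B_n=(n^2-1)/2$, so $e(G_i)\ge (n^2-1)/4=\lfloor n^2/4\rfloor$. Thus in both parities every member satisfies $e(G_i)\ge\lfloor n^2/4\rfloor$.

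\emph{Step 2: apply the weighted Mantel lemma to every quadruple.} Fix any four distinct colors $i_1,i_2,i_3,i_4$ and put $F_j=G_{i_j}$. A triangle whose edges can be assigned three distinct labels from $\{1,2,3,4\}$ is a rainbow triangle of $\G$. Hence Lemma~\ref{lem:four-layer-mantel} gives $\sum_j e(F_j)\le4\lfloor n^2/4\rfloor$. Step~1 gives the reverse inequality, so equality holds. The equality clause of the lemma then forces $G_{i_1}=\cdots=G_{i_4}=K_{A,B}$ for some balanced-as-possible bipartition $A\sqcup B$. Since $n\ge4$, every pair of members lies in a common quadruple. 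Hence all members coincide, and there is one bipartition with $G_i=K_{A,B}$ for every $i\in[n]$.

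\emph{Step 3: contradiction.} If $n$ is odd, then $|A|\ne|B|$, so $K_{A,B}$ has no Hamilton cycle. This contradicts the rainbow Hamilton cycle, whose edges lie in the members $G_i=K_{A,B}$. If $n$ is even, then $|A|=|B|=n/2$, so $\delta(v)=n/2$ for every $v$. This gives $\sum_v\delta(v)=n^2/2\ne B_n$, contradicting \eqref{eq:boundary-layer}. Either way, a rainbow triangle exists.

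The only delicate point I expect is Step~1. The integer rounding must be checked to give exactly $\lfloor n^2/4\rfloor$ edges per member in both parities; the slack in $B_n$ is precisely what the rounding absorbs. Everything else is a direct invocation of the equality case of Lemma~\ref{lem:four-layer-mantel}.
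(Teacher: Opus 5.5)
Your proof is correct and follows the paper's argument essentially step for step. You bound each member by $e(G_r)\ge\lfloor n^2/4\rfloor$ from $2e(G_r)\ge B_n$ with parity rounding, invoke the equality case of the four-layer weighted Mantel lemma, and reach the contradiction via the Hamilton cycle for odd $n$ or the degree sum $n^2/2\ne B_n$ for even $n$. Your remark that any two members lie in a common quadruple covers $n=4$ and $n\ge5$ uniformly, which is a slightly cleaner way to say what the paper does with overlapping four-tuples.
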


\begin{proof}
The case $n=3$ is the Hamilton cycle itself.  Assume $n\ge4$.  For every
$r$,
$
 2e(G_r)=\sum_v d_{G_r}(v)\ge B_n.
$
If $n$ is even, $B_n=n^2/2-1$ is odd, so parity gives
$e(G_r)\ge n^2/4$.  If $n$ is odd, then
$B_n=(n^2-1)/2$ and hence
$e(G_r)\ge(n^2-1)/4=\lfloor n^2/4\rfloor$.  Thus in all cases
$
 e(G_r)\ge\left\lfloor\frac{n^2}{4}\right\rfloor.
$
If no rainbow triangle exists, apply
Lemma~\ref{lem:four-layer-mantel} to any four members.  Equality is
forced, so those four members are the same $T_2(n)$.  For $n\ge5$,
using overlapping four-tuples shows that every member is the same
$T_2(n)$; for $n=4$ there is only one four-tuple.  If $n$ is odd this
common bipartite graph has no Hamilton cycle.  If $n$ is even it has
system-degree sum $n^2/2$, not $B_n$.  Both alternatives contradict the
standing assumptions.
\end{proof}

\subsection{Even shortening: the counting gap survives}

For even $t$, the support squeeze leaves only one possible obstruction:
a missing $C_4$.  In that extremal case every inequality in the count is
forced to be an equality.

\begin{lemma}[Even $t$ at the lower threshold]
\label{lem:boundary-even-t}
Let $2\le t\le n-3$ be even.  Then the system contains a rainbow
$C_{n-t}$.
\end{lemma}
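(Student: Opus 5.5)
We would rerun the counting of Lemma~\ref{thm:even}, but with the weaker lower bound of Lemma~\ref{lem:lower-shortcut-count}. Suppose there is no rainbow $C_{n-t}$, write $t=2m$ and $g:=n^2-2B_n\in\{1,2\}$. Inequality \eqref{eq:lower-shortcut-count} together with \eqref{eq:X-support} gives
\[
  \sum_{a=1}^{2m-1}|S_a|\ \ge\ \frac{n(2m-1)-g}{2}.
\]
The upper bound from the proof of Lemma~\ref{thm:even} holds verbatim, since it uses only \eqref{eq:pair-n}, Lemma~\ref{lem:arc} and Lemma~\ref{lem:disjoint-shortcuts}, and none of these depends on the density. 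It reads
\[
  \sum_{a=1}^{2m-1}|S_a|\ \le\ n(m-1)+m+1 .
\]
Comparing the two bounds gives $n\le t+2+g\le t+4$. Together with $t\le n-3$, only $n-t\in\{3,4\}$ survives.

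If $n=t+3$, the missing cycle is a triangle, and Lemma~\ref{lem:boundary-triangle} already supplies a rainbow $C_3$. If $n=t+4$, then $g=2$, so $n$ is even, and every inequality in the chain must be an equality. We record the resulting rigid structure.
\begin{enumerate}
\item For $1\le a\le m-1$ we have $|S_a|+|S_{t-a}|=n$. Since \eqref{eq:pair-short} gives at most $t+2<n$ when both sets are nonempty, one of the two sets is empty and the other is all of $\Z_n$.
\item $|S_m|=m+1$.
\item $X_a=2|S_a|$ for every $a$, so supported shortcuts occur in both boundary colors.
\item By the equality clause of Lemma~\ref{lem:lower-shortcut-count}, every crossing row is tight, $q_i=0$ for all $i$, and $D_{\rm deg}=0$.
\end{enumerate}
By \eqref{eq:qzero-adjacent-colors}, every Hamilton edge then satisfies $x_jx_{j+1}\in E(G_{j-1})\cap E(G_j)\cap E(G_{j+1})$. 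The task is to turn this extremal configuration for a missing $C_4$ into a contradiction.

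The main tool is the color-shifting trick from the proof of Lemma~\ref{thm:t1}. Because consecutive Hamilton edges may be recolored by $j\mapsto j\pm1$, a chord replacing a block can be given any color $r$ carried by a retained Hamilton edge. To do this, shift colors along the retained arc from the deleted block up to $e_r$, which frees $r$ while keeping all colors distinct. Apply this to two type-$m$ shortcuts $i,j\in S_m$: they each appear in boundary colors, and together they would form a $C_{n-2m}=C_4$. Lemma~\ref{lem:disjoint-shortcuts} already shows their blocks must meet. The new information is that a type-$m$ chord present in any color, not only a boundary color, now yields a contradiction when combined with a disjoint block. This should upgrade the intersecting family $\{I_{m+1}(i):i\in S_m\}$ from the boundary-color support to the full-color support. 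Tightness of every crossing row also says that the crossing sets $A_i\sqcup B_i$ partition each row, which determines, position by position, which chords at the endpoints of each $P_i$ are present.

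We would then derive the contradiction in one of two ways.

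\emph{Direct route.} Read off from the partitions $A_i\sqcup B_i=[n-t-2]$ with $n-t=4$ that each row has exactly two crossing slots. Combined with $|S_m|=m+1$ and the full/empty dichotomy for the pairs $(a,t-a)$, this should force one specific chord to lie outside both $A_i$ and $B_i$. That contradicts tightness, as in the $n=5$ step of Lemma~\ref{thm:t1}.

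\emph{Switching route.} If the direct route fails, use a type $b$ with $S_b=\Z_n$ to perform the segment reversal from the proof of Lemma~\ref{thm:odd}, obtaining a new rainbow Hamilton cycle $H'$. The equality structure must hold again on $H'$ after renormalization, because the defect budget is invariant. However, the reversal moves a starting position of $S_m$, so $S_m(H')$ cannot remain a family of $m+1$ pairwise intersecting $(m+1)$-blocks of the required shape. That contradicts $|S_m(H')|=m+1$.

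We expect this final step, extracting a contradiction from the exact $C_4$ extremal configuration (particularly the small case $m=1$, $n=6$), to be the main obstacle. For $m=1$ I would first try a direct finite check using the triple-colored Hamilton edges.
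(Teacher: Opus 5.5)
\textbf{There is a genuine gap: the extremal $C_4$ configuration is never refuted.}

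Your reduction is correct and is exactly the paper's:
\begin{itemize}
\item the two support bounds give $n-t\le 2+(n^2-2B_n)$;
\item the case $n-t=3$ is Lemma~\ref{lem:boundary-triangle};
\item for $n-t=4$ (so $n$ is even and the budget is $2$) every inequality is tight. This gives $q_i=0$ for all $i$, $X_a=2|S_a|$, the full/empty dichotomy for $(a,t-a)$ with $a<m$, $|S_m|=m+1$, and $e_j\in G_{j-1}\cap G_j\cap G_{j+1}$.
\end{itemize}
The lemma is only proved once this configuration is refuted, and neither of your two routes does so.

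In the direct route, take $\ell=n-t=4$. The crossing cells of a row are indexed by $[3]$, not $[n-t-2]$. The two end cells can only be filled by Hamilton edges. The middle cell merely asks for $x_ix_{i+2}\in G_{i-1}$ or $x_{i+1}x_{i+3}\in G_{i+3}$. You give no argument that this cell is empty, so ``should force one specific chord outside $A_i\cup B_i$'' is only a hope.

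In the switching route, re-deriving the equality structure on $H'$ is legitimate. But it only tells you that $S_m(H')$ is again a pairwise intersecting family of $m+1$ cyclic $(m+1)$-blocks. You give no reason why a reversal makes this impossible, so the claimed contradiction with $|S_m(H')|=m+1$ has no mechanism.

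The colour-shifting ``upgrade'' is never actually used in either route. Moreover, a $C_4$ formed by two type-$m$ shortcuts retains only two Hamilton edges, so there is little to shift along.

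\textbf{What is missing} is a concrete rainbow $C_4$, or an exchange argument producing one, extracted from the tight structure. The paper proceeds as follows.
\begin{itemize}
\item $n=6$: from $S_1=\{0,1\}$, the cycle $x_0x_2x_3x_1x_0$ coloured $0,3,2,5$ is rainbow.
\item $n=8$: a type-$2$ chord from $S_2$ closes $x_ix_{i+1}x_{i+2}x_{i+3}x_i$, coloured $i,i+1,i+3,i+2$.
\item $n\ge10$: an exchange principle for $b\in\{1,2\}$. A nonempty, hence full, $S_b(H)$ persists under the block reversal, because the old edge $x_px_{p+1}$ becomes a supported type-$b$ chord. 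Type-$(t-b)$ chords are then excluded from every member. The switch orbits make every member edgeless ($b=1$) or remove all cross-parity pairs ($b=2$). A final reversal converts $S_{t-1}=\Z_n$ into $S_2(H')\ne\varnothing$.
\end{itemize}

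The ingredients you already recorded would also finish $n\ge8$ directly. By (i) with $a=1$, one of two cases holds:
\begin{itemize}
\item $S_1=\Z_n$: then $x_0x_2x_3x_1x_0$ coloured $0,3,2,-1$ is a rainbow $C_4$.
\item $S_{t-1}=\Z_n$: for $t=n-4$ this says $x_jx_{j+4}\in G_{j-1}\cap G_{j+4}$ for every $j$. Then $x_jx_{j+4}x_{j+5}x_{j+1}x_j$ coloured $j-1,j+3,j+5,j$ is a rainbow $C_4$.
\end{itemize}
Some such construction must be supplied. As written, your proposal stops where the real work of the lemma begins.
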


\begin{proof}
Suppose not and write $t=2m$.  The support upper bound from the proof of
Lemma~\ref{thm:even} remains valid:
\[
 \sum_{a=1}^{2m-1}|S_a|\le n(m-1)+m+1.
\]
By Lemma~\ref{lem:lower-shortcut-count},
\begin{align*}
 2\sum_{a=1}^{2m-1}|S_a|
 \ge\sum_{a=1}^{2m-1}X_a
 \ge n(2m-1)-(n^2-2B_n).
\end{align*}
Consequently, 
$
 \ell\le2+(n^2-2B_n)
$ for $\ell=n-t$.
If $n$ is odd, then $n^2-2B_n=1$ and $\ell$ is odd, so $\ell=3$,
contrary to Lemma~\ref{lem:boundary-triangle}.  Hence $n$ is even,
$n^2-2B_n=2$, and $\ell$ is even, so $\ell=4$.  For this value of
$\ell$, the lower and upper bounds on $2\sum_a|S_a|$ coincide.  Hence every
inequality used between them is tight.  Lemma~\ref{lem:lower-shortcut-count}
then forces every crossing row to be tight and the relevant layer degrees to
equal the system degrees, also
$\sum_a(2|S_a|-X_a)=0$. So every supported shortcut occurs in both boundary
colors.  Finally, the upper bound
\[
 \sum_{a=1}^{2m-1}|S_a|
 \le \sum_{a=1}^{m-1} n +(m+1)
\]
is tight.  Since each complementary pair contributes at most $n$ and the
central support contributes at most $m+1$, all these component bounds are
tight.  In particular
\begin{equation}
 q_i=0\quad\text{for every }i,
 \qquad
 X_a=2|S_a|\quad(1\le a\le t-1),
 \label{eq:C4-all-equality}
\end{equation}
and for every $1\le a<m$,
\begin{equation}
 |S_a|+|S_{t-a}|=n.
 \label{eq:C4-pair-equality}
\end{equation}
Since two nonempty complementary supports would have total size at most
$t+2=n-2$, \eqref{eq:C4-pair-equality} gives
$
 \{S_a,S_{t-a}\}=\{\mathbb Z_n,\varnothing\}.
$
Moreover, \eqref{eq:C4-all-equality} says that every chord in a full
support occurs in both boundary colors.

We first dispose of the two small cases.  If $n=6$, then $t=2$ and
$|S_1|=2$.  As in the central-support argument of Lemma~\ref{thm:even},
the two corresponding cyclic $2$-edge blocks are pairwise intersecting.
Hence their starting positions are consecutive and, after a cyclic shift,
$S_1=\{0,1\}$.  Therefore,
$
 x_0x_2\in G_0\cap G_1$ and
 $x_1x_3\in G_1\cap G_2$.
Since every crossing row is tight,
\[
 x_ix_{i+1}\in G_{i-1}\cap G_i\cap G_{i+1}
 \qquad\forall i.
\]
Thus $x_0x_2x_3x_1x_0$ has the rainbow coloring $0,3,2,5$, a
contradiction.

If $n=8$, then $t=4$ and equality in the central-support bound gives
$|S_2|=3$.  Choose $i\in S_2$.  Then
$ x_ix_{i+3}\in G_i\cap G_{i+2}$ and 
 $
 x_{i+2}x_{i+3}\in G_{i+3}$,
where the second inclusion follows from $q_j=0$ for all $j$.  Hence
$x_ix_{i+1}x_{i+2}x_{i+3}x_i$ has the rainbow coloring
$i,i+1,i+3,i+2$, again a contradiction.

Assume now that $n\ge10$.  Then $t=n-4\ge6$, so $1,2<t/2$.  In
particular, all shortcut types used below lie in the range
$1\le b\le t-1$, and every two-block replacement still leaves the target
cycle length $n-t=4$.  The preceding equality argument depends only on the
current colored rainbow Hamilton cycle, and therefore remains valid after
any rainbow Hamilton switch.

\smallskip
\noindent\emph{Exchange principle.}
We claim that $S_b(H)\ne\varnothing$ is impossible for every colored rainbow
Hamilton cycle $H$ and every $b\in\{1,2\}$.  Suppose otherwise and apply the
equality argument to $H$.  Since $b<t/2$, it gives
$
 S_b(H)=\mathbb Z_n$,
 $
 S_{t-b}(H)=\varnothing
$,
and every type-$b$ chord occurs in both boundary colors.

Write the current Hamilton cycle as
\[
 H=x_0x_1\cdots x_{n-1}x_0,
 \qquad x_jx_{j+1}\text{ colored }c_j.
\]
For every $p$, reverse the block between $x_p$ and $x_{p+b+2}$:
\[
 x_p x_{p+1}\cdots x_{p+b+1}x_{p+b+2}
 \longmapsto
 x_p x_{p+b+1}x_{p+b}\cdots x_{p+1}x_{p+b+2}.
\]
Color the two new boundary edges by $c_p$ and $c_{p+b+1}$ and keep the
old colors on the reversed middle edges.  Call the resulting rainbow Hamilton
cycle $H'$.  The old edge $x_px_{p+1}$ is now a supported type-$b$
chord.  Hence $S_b(H')\ne\varnothing$ on the new cycle $H'$, so the equality
structure again gives $S_b(H')=\mathbb Z_n$.  Thus these switches may be
iterated.

We next show that every type-$(t-b)$ chord is absent from every member.
Suppose
$
 f=x_ix_{i+t-b+1}\in G_r.
$
The block replaced by $f$ has $t-b+1$ Hamilton edges, while its
complement has $b+3$ edges.  If the Hamilton edge of color $r$ lies in
the deleted block, choose any $(b+1)$-edge block in the complement.  If
it lies in the complement, choose such a block containing that edge.
Its type-$b$ shortcut is available in both boundary colors.  Those two
boundary colors are the colors of the first and last Hamilton edges of
the chosen block; since the block has at least two edges, at least one of
them is different from $r$, and that color is released by the block.
Together with $f$ colored $r$, the two edge-disjoint replacements delete
$(t-b)+b=t$ vertices and, by Lemma~\ref{lem:disjoint-shortcuts}, yield a
rainbow $C_4$, a contradiction.
Therefore all type-$(t-b)$ chords are absent in every color; the same is
true on every Hamilton cycle reached by the switches.

For $b=1$ the switch exchanges two consecutive positions, so the
switches generate all vertex permutations.  Any pair can then be placed
at the ends of a forbidden type-$(t-1)$ chord, forcing every member to
be edgeless.  For $b=2$ the switch exchanges positions at distance two
and generates all permutations within the two parity classes.  A
forbidden type-$(t-2)$ chord has odd endpoint distance $t-1=n-5$.
Hence every pair from opposite parity classes is absent from every
member, contradicting the Hamilton cycle.  This proves the claim.

Finally, apply \eqref{eq:C4-pair-equality} with $a=1$.  The claim rules
out $S_1\ne\varnothing$, so
$
 S_{t-1}=\mathbb Z_n.
$
Reverse
$
 x_0x_1\cdots x_tx_{t+1}$
 to
 $x_0x_tx_{t-1}\cdots x_1x_{t+1}$,
using colors $0$ and $t$ on the two new boundary edges and the old
colors on the reversed middle edges.  Call the resulting rainbow Hamilton
cycle $H'$.  On $H'$, the old type-$(t-1)$ chord $x_{-1}x_{t-1}$ joins vertices three
positions apart and is present in the boundary color $-1$.  Thus $S_2(H')\ne\varnothing$, contradicting the exchange principle.
\end{proof}

\subsection{Odd shortening: robust type-\texorpdfstring{$2$}{2} exchange}

For odd $t$ the count may miss equality, but by at most one unit when $n$ is
odd and two units when $n$ is even.  The next lemma shows that this still
leaves enough type-$2$ switches.  We keep only the condition
$S_2(H)\ne\varnothing$. The exceptional position may move when
the Hamilton cycle changes.

\begin{lemma}[Type-$2$ switches near equality]
\label{lem:near-full-step2}
Let $3\le t\le n-3$ be odd and suppose that there is no rainbow $C_{n-t}$.
Assume $n-t\ge4$ when $n$ is odd and $n-t\ge5$ when $n$ is even.  Then there
is a rainbow Hamilton cycle $H_0$ for which $S_2(H_0)\ne\varnothing$.
For every cycle $H$ reached from $H_0$ by the switches below and satisfying
$S_2(H)\ne\varnothing$, the following hold.
\begin{enumerate}
\item If $n$ is odd, every type-$2$ switch is available\footnote{Here ``available'' means that the displayed local exchange can be colored
	rainbow while all edges outside the exchanged segment retain their colors.}.
\item If $n$ is even, among switches whose starting positions have a fixed
parity, at most one is unavailable.
\item Every available switch can be colored so that the resulting Hamilton
cycle $H'$ again satisfies $S_2(H')\ne\varnothing$.
\end{enumerate}

\end{lemma}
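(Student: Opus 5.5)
Throughout, fix odd $t=2m+1$ and apply the defect identity \eqref{eq:defect-identity} to whatever normalized rainbow Hamilton cycle is current. Its right-hand side is $1$ for odd $n$ and $2$ for even $n$. Since $D_{\rm supp}$ enters with weight $2$, for odd $n$ we get $D_{\rm supp}=0$. For even $n$ we get $D_{\rm supp}\le1$, and if $D_{\rm supp}=1$ all other defects vanish.

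\textbf{Step 1: the pair structure.} Two nonempty complementary supports have total at most $t+2\le n-1$ by \eqref{eq:pair-short}. So a pair $(a,t-a)$ with $|S_a|+|S_{t-a}|\ge n-1$ has one support empty, and the other of size $\ge n-1$. The hypotheses $n-t\ge4$ (odd $n$), resp.\ $n-t\ge5$ (even $n$), give $t+2\le n-2$, resp.\ $\le n-3$. This leaves room to rule out the case where both supports are nonempty even with deficit one or two.

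\textbf{Step 2: producing $H_0$.} Pick a pair with a nearly full support $S_b$. If $b=2$ we are done. Otherwise, as in the ``creating full type-$2$ support'' step of Lemma~\ref{thm:odd}, reverse the block $x_0x_1\cdots x_{b+1}x_{b+2}$ using a chord present in a boundary color. We need a position $i$ with $x_ix_{i+b+1}$ and $x_{i-1}x_{i+b}$ both available in the appropriate colors. Near-fullness together with $D_{\rm col}\le2$ supplies such an $i$, since at most two of the $2n$ incidences are missing. The resulting cycle $H'$ has a distance-three chord present in a boundary color, so $S_2(H')\ne\varnothing$.

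\textbf{Step 3: items (i)--(iii).} Given $H$ with $S_2(H)\ne\varnothing$, Step 1 forces $S_{t-2}(H)=\varnothing$ and $|S_2(H)|\ge n-1$, with all boundary incidences present except for at most the defect budget. The switch at $p$ needs $x_px_{p+3}\in G_{c_p}$ and $x_{p+1}x_{p+4}\in G_{c_{p+3}}$ (or an alternative legal color pair). Each missing incidence kills at most the switches at $p$ and $p-1$, which have opposite parities.
\begin{itemize}
\item For odd $n$: the only possible defect is a single missing incidence or a single crossing deficit. Here I would use \eqref{eq:qzero-adjacent-colors}: Hamilton edges lying in neighbouring colors allow recoloring the middle reversed edges, giving an alternative legal coloring. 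This restores every switch and proves (i).
\item For even $n$: two units of defect can hit at most one switch of each parity, which proves (ii).
\end{itemize}
For (iii), argue exactly as in the closure step of Lemma~\ref{thm:odd}. After the switch at $p$, the old chord $x_{p-1}x_{p+2}$ becomes a distance-three chord of $H'$ in the unchanged boundary color $c_{p-1}$, so $S_2(H')\ne\varnothing$. If that particular incidence is the missing one, use the symmetric chord $x_{p+2}x_{p+5}$ at the other end instead; it also becomes a distance-three chord in color $c_{p+4}$.

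\textbf{Main obstacle.} I expect the hardest step to be the odd-$n$ claim in (i): that a single unit of slack never disables a switch. That step needs the crossing-row tightness information \eqref{eq:qzero-end-edges} to provide alternate colors for the two new boundary edges.
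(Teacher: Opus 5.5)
Your outline follows the paper's route: defect identity, complementary dichotomy, a reversal producing $H_0$, budget accounting for failed switches, and persistence. The even-order part, however, has a gap that affects both (ii) and (iii).

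The gap is the configuration $D_{\rm col}=2$, where the two units of slack are two separate missing boundary incidences rather than one unsupported start. Normalize so that $e_j=x_jx_{j+1}$ has color $j$, and write $f_s=x_sx_{s+3}$. Under the standard coloring $(p,p+2,p+1,p+3)$, each boundary incidence is used by exactly one switch. Your picture of a defect killing the switches at $s$ and $s-1$ therefore describes only the $D_{\rm supp}=1$ case. Two independent misses such as $f_p\notin G_p$ and $f_{p+2}\notin G_{p+2}$ kill the switches at $p$ and $p+2$, which have the same parity. So ``two units of defect hit at most one switch of each parity'' does not follow from your count.

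What rescues (ii) is the mechanism you reserve for odd $n$. Here $D_{\rm col}=2$ exhausts the budget in \eqref{eq:defect-identity}, so $D_{\rm cross}=0$ and every crossing row is tight. Then \eqref{eq:qzero-adjacent-colors} gives $e_j\in G_{j-1}\cap G_j\cap G_{j+1}$. Every switch can now be colored: give each new chord any present boundary color, and distribute the two leftover colors on the reversed middle edges. The exhaustive case split is:
\begin{itemize}
\item $|S_2|=n-1$: then $D_{\rm supp}=1$, and the only possible failures are at $s$ and $s-1$, which have opposite parity.
\item $S_2=\mathbb{Z}_n$ with $D_{\rm cross}=0$: every switch is available.
\item $D_{\rm cross}\ge1$: this forces $D_{\rm supp}=0$ and $D_{\rm col}\le1$, hence at most one failure.
\end{itemize}
In particular, the odd case you flag as the main obstacle is the easy one.

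The same configuration breaks your persistence argument for (iii). Your two witnesses are the incidences $x_{p-1}x_{p+2}\in G_{p-1}$ and $x_{p+2}x_{p+5}\in G_{p+4}$. With $D_{\rm col}=2$, both can be the missing ones while $f_p\in G_p$ and $f_{p+1}\in G_{p+3}$ still hold. The switch at $p$ is then available, yet you exhibit no element of $S_2(H')$.

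The paper uses a witness that costs nothing: the old Hamilton edge $x_px_{p+1}\in G_p$, which joins positions $p$ and $p+3$ of $H'$. It is a supported type-$2$ chord as soon as color $p$ sits on the first or third edge of the new segment. Every coloring needed for (i)--(ii) can be chosen that way: put $p$ on $x_px_{p+3}$ when $f_p\in G_p$, and otherwise on the reversed edge $x_{p+2}x_{p+1}$.

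A smaller slip is in Step 2. The reversal needs both new boundary chords, so each position requires three type-$b$ incidences, not two. The count still closes, since at most two incidences are missing, each excludes at most two positions, and $n\ge7$.
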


\begin{proof}
\smallskip
\noindent\emph{Almost-full complementary supports.}
Write $t=2m+1$.  From \eqref{eq:odd-lower-count} and
\eqref{eq:boundary-gap},
\[
 0\le mn-\sum_{a=1}^{2m}|S_a|
 \le
 \begin{cases}
  0,& n\text{ odd},\\
  1,& n\text{ even}.
 \end{cases}
\]
Since
\[
 mn-\sum_{a=1}^{2m}|S_a|
 =\sum_{a=1}^{m}\bigl(n-|S_a|-|S_{t-a}|\bigr),
\]
and every summand is nonnegative, each complementary pair has total support
$n$ when $n$ is odd and at least $n-1$ when $n$ is even.  If both $S_a$ and $S_{t-a}$ were
nonempty, \eqref{eq:pair-short} would give
$
 n-|S_a|-|S_{t-a}|\ge n-t-2,
$
which is at least $2$ for odd $n$ and at least $3$ for even $n$ in the
stated range.  This is impossible.  The two supports cannot both be
empty either.  Hence exactly one of $S_a,S_{t-a}$ is nonempty, and the
nonempty one has size $n$ when $n$ is odd and at least $n-1$ when $n$
is even.

\smallskip
\noindent\emph{Creating type-$2$ support.}
If $S_2\ne\varnothing$, take the current Hamilton cycle for $H_0$.
Otherwise $S_{t-2}$ is nonempty and has size at least $n-1$.  Put $b=t-2$.  Consider the reversal
\[
 x_p x_{p+1}\cdots x_{p+t-1}x_{p+t}
 \longmapsto
 x_p x_{p+t-1}x_{p+t-2}\cdots x_{p+1}x_{p+t}.
\]
Its two new boundary edges are the type-$b$ shortcuts starting at $p$ and
$p+1$, use them in colors $p$ and $p+t-1$, respectively.  We also require
the type-$b$ shortcut starting at $p-1$ in color $p-1$.  After the reversal,
this third chord joins positions $p-1$ and $p+2$, and hence is a supported
type-$2$ shortcut on the new Hamilton cycle.

Choose $p$ so that the three required incidences
\[
 x_px_{p+t-1}\in G_p,\qquad
 x_{p+1}x_{p+t}\in G_{p+t-1},\qquad
 x_{p-1}x_{p+t-2}\in G_{p-1}
\]
are all present.  If the nonempty complementary support has size $n-1$,
then $D_{\rm supp}=1$, so $2D_{\rm supp}$ already uses the entire
even-order defect budget.  Hence every crossing row is tight and every supported chord
occurs in both boundary colors.  The missing start $s$ can interfere
only with $p\in\{s-1,s,s+1\}$, so at most three choices are excluded.

Otherwise the type-$b$ support is full.  The hypotheses imply $n\ge7$
($n-t\ge4$ with $t\ge3$ in odd order, and even more in even order).  The
defect budget leaves at most two missing boundary-color incidences in total.  A missing left incidence
at a start $s$ can exclude only the choices $p=s$ and $p=s+1$, while a
missing right incidence can exclude only $p=s-1$.  Thus at most four values
of $p$ are excluded.  Since $n\ge7$, an admissible $p$ exists.  Performing the reversal with the indicated two boundary colors gives a
rainbow Hamilton cycle; call it $H_0$.  The third incidence gives a supported
type-$2$ shortcut on $H_0$, so $S_2(H_0)\ne\varnothing$.

\smallskip
\noindent\emph{How many switches can fail.}
Fix a reached colored Hamilton cycle $H$ with $S_2(H)\ne\varnothing$.
For this local calculation, globally relabel the colors so that the Hamilton
edge in position $i$ has color $i$, and recompute all defect quantities for
this normalized cycle.  Abbreviate $S_a(H)$ by $S_a$.
The complementary-pair argument gives
$
 |S_2|\ge n-1$,
 $S_{t-2}=\varnothing$.
For the switch at $p$, write
$
 f_p=x_px_{p+3}, f_{p+1}=x_{p+1}x_{p+4}.
$
If $|S_2|=n-1$, then $D_{\rm supp}=1$.  This is possible only for even
$n$, and \eqref{eq:defect-identity} forces all other defects to vanish.
Thus every supported type-$2$ chord occurs in both boundary colors and every
crossing row is tight.  If $s$ is the unique unsupported start, only the
switches starting at $s$ and $s-1$ can fail and  these starts have opposite
parities.

Assume now that $S_2=\mathbb Z_n$.  If all crossing rows are tight, then
\eqref{eq:qzero-adjacent-colors} gives
\[
 e_j\in G_{j-1}\cap G_j\cap G_{j+1}\qquad\forall j.
\]
Choose any available boundary color from $\{p,p+2\}$ for $f_p$ and from
$\{p+1,p+3\}$ for $f_{p+1}$.  The two choices are distinct.  The two
unused colors from $\{p,p+1,p+2,p+3\}$ can then be assigned to
$e_{p+2}$ and $e_{p+1}$, whose available color sets contain
$\{p+1,p+2,p+3\}$ and $\{p,p+1,p+2\}$, respectively: if $p$ is
unused, give it to $e_{p+1}$; if $p+3$ is unused, give it to $e_{p+2}$;
otherwise the two unused colors are $p+1,p+2$.  Hence every switch is
available.

It remains only to consider a positive crossing defect.  If $n$ is odd,
the total defect budget is $1$. Hence $D_{\rm col}=0$,  both preferred
shortcut incidences are present and the usual coloring
$(p,p+2,p+1,p+3)$ works.  Let $n$ be even.  Again there is nothing to
prove when $D_{\rm col}=0$.  Otherwise
\eqref{eq:defect-identity} forces
\[
 D_{\rm col}=D_{\rm cross}=1,
 \qquad D_{\rm deg}=D_{\rm supp}=0.
\]
Thus there is exactly one missing shortcut incidence and one unit of
crossing deficit.  Only the switch for which that incidence is a preferred
boundary color can be affected.  Suppose, for instance, that
$f_p\notin G_p$.  Then $f_p\in G_{p+2}$, while $f_{p+1}$ has both
boundary colors.  If $e_{p+1}\in G_p$, the switch is still colorable.  Use $p+2$ on
$f_p$ and $p$ on $e_{p+1}$.  If $e_{p+2}\in G_{p+1}$, use colors
$p+1,p+3$ on $e_{p+2},f_{p+1}$, respectively.  Otherwise $e_{p+2}\notin G_{p+1}$, so the contrapositive of the first
implication in \eqref{eq:qzero-adjacent-colors} gives $q_{p+2}>0$.
This is the unique crossing defect.  Since $t+2<n$, the row
$p+t+4$ is different from $p+2$ modulo $n$, and hence
$q_{p+t+4}=0$.  The second implication in
\eqref{eq:qzero-adjacent-colors}, with $j=p+2$, now gives
$e_{p+2}\in G_{p+3}$. Then use $p+3,p+1$ instead.  Hence this switch can fail only if
$e_{p+1}\notin G_p$.
The case of a missing preferred right boundary is symmetric.  Therefore at
most one switch in total is unavailable; in particular, each parity class
of starting positions contains at most one unavailable switch.

\smallskip
\noindent\emph{Persistence under switching.}
Every switch whose availability was established above admits a coloring
that also preserves the invariant $S_2\ne\varnothing$.  If
$f_p\in G_p$, use color $p$ on $f_p$ and complete the switch by the
coloring already given.  The only remaining guaranteed case is the one in
which $f_p\notin G_p$ but $e_{p+1}\in G_p$. There we use $p+2$ on
$f_p$ and $p$ on $e_{p+1}$, and the preceding argument supplies the two
remaining colors.  (If the missing preferred incidence is on the right,
$f_p$ still has color $p$.)

Let $H'$ be the resulting Hamilton cycle.  In every case color $p$ appears
on the first or third edge of the new four-edge segment.  The old edge
$x_px_{p+1}$ now joins positions $p$ and $p+3$ and still belongs to $G_p$,
hence it is a supported type-$2$ chord of $H'$.  Thus
$S_2(H')\ne\varnothing$,  the argument may be iterated. The exceptional
position (if any) may move.
\end{proof}

The relation $S_{t-2}=\varnothing$ excludes only the two boundary
colors of a complementary chord.  For the switching argument we need the
stronger conclusion that such a chord is absent from every member of the
system.

\Needspace{7\baselineskip}
\begin{lemma}[Complementary-chord exclusion]
\label{lem:boundary-arbitrary-forbidden}
Under the hypotheses of Lemma~\ref{lem:near-full-step2}, let $H$ be any
reached colored Hamilton cycle with $S_2(H)\ne\varnothing$.  After globally
relabeling the colors so that
$H=x_0x_1\cdots x_{n-1}x_0$ with $x_ix_{i+1}\in E(G_i)$, one has
\begin{equation}
 x_ix_{i+t-1}\notin E(G_r)
 \qquad\forall i,r\in\mathbb Z_n.
 \label{eq:boundary-arbitrary-forbidden}
\end{equation}
\end{lemma}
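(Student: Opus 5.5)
I would copy the ``Excluding complementary chords'' step of Lemma~\ref{thm:odd}, using the near-full structure from Lemma~\ref{lem:near-full-step2} in place of full type-$2$ support. Assume for contradiction that $f=x_ix_{i+t-1}\in E(G_r)$. This chord replaces the block $B=B_H(i,t-2)$ of $t-1$ Hamilton edges. The complementary Hamilton arc has $n-t+1$ edges, which is at least $5$ when $n$ is odd and at least $6$ when $n$ is even. Inside this arc I look for a $3$-edge block $J=B_H(j,2)$ edge-disjoint from $B$ with two properties. First, $j\in S_2(H)$, with at least one boundary color of $J$ different from $r$ in which the type-$2$ chord $x_jx_{j+3}$ is present. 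Second, if the edge $e_r$ lies outside $B$, then $J$ contains $e_r$, so that color $r$ is released by $B\cup J$. Coloring $f$ with $r$ and the chord of $J$ with that other boundary color, Lemma~\ref{lem:disjoint-shortcuts} produces a rainbow cycle of length $n-(t-2)-2=n-t\ge3$. This contradicts the hypothesis.

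The supply of good blocks $J$ comes from the normalized structure already extracted in the proof of Lemma~\ref{lem:near-full-step2}. Since $S_2(H)\ne\varnothing$, we have $|S_2|\ge n-1$ and $S_{t-2}=\varnothing$. In addition, \eqref{eq:defect-identity} bounds the number of missing boundary-color incidences. When $n$ is odd, $D_{\rm supp}=0$, so $S_2=\mathbb Z_n$ and $D_{\rm col}\le1$. When $n$ is even, either $|S_2|=n-1$ with $D_{\rm col}=0$, or $S_2=\mathbb Z_n$ with $D_{\rm col}\le2$. In every case each $j\in S_2$ has the type-$2$ chord present in at least one boundary color. The only bad situation is a supported $j$ whose chord is present \emph{only} in color $r$, so that its unique present boundary color equals $r$. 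A chord with one boundary incidence missing costs one unit of $D_{\rm col}$. Hence there are at most two starts that are bad for this reason, together with at most one unsupported start.

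The main obstacle is the case $e_r\notin B$, because then $J$ must contain $e_r$. Only the three starts $j\in\{r-2,r-1,r\}$ are candidates, and some of these may stick out of the complementary arc when $e_r$ is near its end. I would first argue that a candidate whose chord is present in any boundary color other than $r$ works: if $e_r$ is the first or last edge of $J$, the other boundary color is automatically different from $r$, and if $e_r$ is the middle edge, both boundary colors differ from $r$. So the only true failures are unsupported starts, or starts with a missing incidence in the single boundary color $\neq r$. Because the arc has at least $5$ edges, at least two of the three candidate blocks fit inside it, and at least three fit when $e_r$ is interior. I would then check that the defect budget ($1$ for odd $n$, $2$ for even $n$) cannot kill every fitting candidate. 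The sub-case I expect to be tightest is even $n$ with $e_r$ at an end of the arc and only two fitting candidates. There I would use the extra length $n-t+1\ge6$. If that still fails, I would fall back on tight crossing rows via \eqref{eq:qzero-adjacent-colors} to recolor $e_r$ in a neighbouring color, or pick a different $J$ in which color $r$ is released through a recolored middle edge.

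When $e_r\in B$, color $r$ is already released by $B$. Any supported $J$ inside the complementary arc then works as long as one of its present boundary colors is not $r$, and that is automatic since $e_r\notin J$. There are at least $n-t-1\ge3$ admissible starts, and at most three of them are excluded by the defect accounting. I would check this count directly, using $n-t\ge5$ in the even case, to ensure one start survives.
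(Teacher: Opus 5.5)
\textbf{There is a genuine gap} in the case $e_r\notin B$ with $e_r$ an end edge of the complementary arc $R$.

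Your count ``at least two of the three candidate blocks fit'' is false there. If $e_r$ is the first edge of $R$, then $e_{r-1}$ is the last edge of $B$. So $B_H(r-2,2)$ and $B_H(r-1,2)$ both meet $B$, and the only block containing $e_r$ that is edge-disjoint from $B$ is $J_0=B_H(r,2)$, with boundary colors $r$ and $r+2$. A single unit of defect can leave $J_0$ with no usable color other than $r$:
\begin{itemize}
\item $x_rx_{r+3}\notin G_{r+2}$, which costs one unit of $D_{\rm col}$ and is permitted even when $n$ is odd; or
\item for even $n$, $r$ is the unique unsupported start.
\end{itemize}
The extra length of $R$ does not help. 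As long as $e_r$ is retained in color $r$, the chord $f$, known only to lie in $G_r$, has no free color. So the budget check you plan cannot succeed, and the difficulty is not confined to even $n$.

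\textbf{What is missing} is the recoloring you list only as a vague fallback: keep $e_r$ and move it to a neighbouring released color. This requires the tight-row bookkeeping that the paper carries out. Let $J_1=B_H(r+1,2)$.
\begin{itemize}
\item If $J_0$ is unsupported, then $2D_{\rm supp}=2$ exhausts the even-order budget, so every crossing row is tight. Then \eqref{eq:qzero-adjacent-colors} gives $e_r\in G_{r-1}$. Recolor $e_r$ with $r-1$ (released by $B$), give $f$ color $r$, and replace $J_1$ in either boundary color $r+1$ or $r+3$.
\item If $J_0$ is supported but misses color $r+2$, the same recoloring works when $e_r\in G_{r-1}$.
\item Otherwise $q_r>0$. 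Together with the missing incidence this exhausts the budget, so $n$ is even and there are no other defects. Since $t+2<n$, we get $q_{r+t+2}=0$, and the second implication of \eqref{eq:qzero-adjacent-colors} gives $e_r\in G_{r+1}$. Recolor $e_r$ with $r+1$ (released by $J_1$), color the $J_1$ shortcut with $r+3$, and give $f$ color $r$.
\end{itemize}
The other end of $R$ is symmetric. Since $e_r$ is retained but recolored, the rainbow property must be checked directly rather than quoted from Lemma~\ref{lem:disjoint-shortcuts}, which keeps retained colors fixed.

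Your treatment of the cases $e_r\in B$ and $e_r$ not an end edge of $R$ is sound. There the middle block $B_H(r-1,2)$, or a neighbouring supported block with both incidences, suffices. Your claim that ``three fit'' also overcounts when $e_r$ is the second edge of $R$, but this is harmless.
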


\begin{proof}
For this normalized cycle write $S_a=S_a(H)$ and
$e_j=x_jx_{j+1}$.  We know that $S_{t-2}=\varnothing$.
Suppose, to the contrary, that
$
 f=x_ix_{i+t-1}\in G_r.
$
Let $B=B_H(i,t-2)$ be the $(t-1)$-edge block replaced by $f$, and let
$R$ be the complementary Hamilton arc.  Thus $R$ has $n-t+1$ edges,
at least $5$ when $n$ is odd and at least $6$ when $n$ is even.  We find
a three-edge block $J\subseteq R$ whose type-$2$ shortcut can be used
together with $f$.

\smallskip
\noindent\emph{If $e_r\in B$.}
Color $f$ with the released color $r$.  At most one type-$2$ start is
unsupported, so $R$ contains a supported three-edge block $J$.  Any legal
boundary color of its shortcut is different from $r$, because both boundary
edges of $J$ lie in $R$.  The two disjoint replacements therefore give a
rainbow $C_{n-t}$ by Lemma~\ref{lem:disjoint-shortcuts}.

\smallskip
\noindent\emph{If $e_r$ is an interior edge of $R$.}
Choose a three-edge block $J\subseteq R$ containing $e_r$.  If all type-$2$
starts are supported, take the block centered at $e_r$; its boundary colors
are both different from $r$.  If one start is unsupported, then every other
type-$2$ shortcut occurs in both boundary colors.  Among two consecutive
three-edge blocks containing $e_r$, at least one is supported, and one of
its boundary colors is different from $r$.  Use that color on the shortcut
of $J$ and color $f$ with $r$.  Again Lemma~\ref{lem:disjoint-shortcuts}
gives a rainbow $C_{n-t}$.

\smallskip
\noindent\emph{If $e_r$ is an end edge of $R$.}
Orient $R$ so that $e_r$ is its first edge, and let $J_0,J_1$ be the first
two three-edge blocks of $R$.  If the shortcut of $J_0$ has a legal boundary
color different from $r$, use it and color $f$ with $r$, as above.

It remains only when $J_0$ is unsupported, or when its non-$r$ boundary
incidence is missing.  If $J_0$ is unsupported, this is impossible for odd $n$ and exhausts
the two-unit defect budget for even $n$. Hence every crossing row is tight
and every other supported type-$2$ shortcut has both boundary colors.  Since
$e_r$ is the first edge of $R$, the preceding edge $e_{r-1}$ is the last
edge of $B$.  Tightness and \eqref{eq:qzero-adjacent-colors} give
$e_r\in G_{r-1}$.  Recolor $e_r$ with the released color $r-1$, color $f$
with $r$, and use either boundary color of the $J_1$ shortcut.

Now suppose that all type-$2$ starts are supported but the non-$r$ boundary
incidence of $J_0$ is missing.  If $e_r\in G_{r-1}$, the same recoloring
works.  Otherwise the contrapositive of the first implication in
\eqref{eq:qzero-adjacent-colors} gives $q_r>0$.  Together with the missing
boundary incidence, \eqref{eq:defect-identity} rules out odd $n$. For even
$n$ it forces these to be the only two defects.  In particular every
other type-$2$ shortcut, including the shortcut of $J_1$, has both boundary
incidences.  Since $t+2<n$, the row $r+t+2$ is different from the defective
row $r$, we have $q_{r+t+2}=0$. The second implication in
\eqref{eq:qzero-adjacent-colors}, with $j=r$, gives $e_r\in G_{r+1}$.
Recolor $e_r$ with the color $r+1$ released by $J_1$, color $f$ with $r$,
and color the $J_1$ shortcut with its other boundary color.  The opposite end of $R$ is
symmetric.

In the endpoint recoloring subcases, the recolored edge $e_r$ is retained,
and its new color is released by $B$ or $J_1$. The shortcut of $J_1$ receives
a different released boundary color, while $f$ receives the now free color
$r$.  Thus the resulting cycle is rainbow.  In every case the two disjoint
replacements delete $(t-2)+2=t$ internal vertices, so the resulting cycle
has length $n-t$, a contradiction.
\end{proof}

\begin{lemma}[Odd $t$ at the lower threshold]
\label{lem:boundary-odd-t}
Let $3\le t\le n-3$ be odd.  Then the system contains a rainbow
$C_{n-t}$.
\end{lemma}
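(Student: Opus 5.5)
Suppose, for a contradiction, that there is no rainbow $C_{n-t}$, and set $\ell=n-t$. The argument splits according to $\ell$ and the parity of $n$.

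\emph{Main range.} Assume $\ell\ge4$ when $n$ is odd and $\ell\ge5$ when $n$ is even. These are exactly the hypotheses of Lemma~\ref{lem:near-full-step2}, so there is a rainbow Hamilton cycle $H_0$ with $S_2(H_0)\ne\varnothing$. Let $\mathcal H$ be the set of colored rainbow Hamilton cycles reached from $H_0$ by switches that Lemma~\ref{lem:near-full-step2}(iii) colors so as to keep $S_2\ne\varnothing$. On every member of $\mathcal H$, Lemma~\ref{lem:boundary-arbitrary-forbidden} gives the forbidden-chord condition \eqref{eq:abstract-forbidden} with the even distance $q=t-1$.

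If $n$ is odd, part (i) makes every switch available, so $\mathcal H$ is closed under all switches \eqref{eq:abstract-switch}. The first half of the proof of Lemma~\ref{lem:exchange-closure} then forces $n$ to be even, which is a contradiction. That half uses no degree hypothesis.

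If $n$ is even, part (ii) leaves at most one unavailable switch in each parity class of starting positions. The main obstacle is to show that the switches which remain still generate all permutations of each parity orbit. Within one orbit, the switches are adjacent transpositions along a cycle of length $n/2$. Deleting one edge of that cycle leaves a path, and adjacent transpositions along a path still generate the full symmetric group on its vertices. The missing switch may move after each switch, so the argument cannot rely on a fixed path. I would handle this by induction on the number of switches: to realise a given transposition, route the moved vertex around whichever side of the orbit cycle currently avoids the unavailable edge. Each intermediate cycle still satisfies $S_2\ne\varnothing$, so the lemmas apply again at every step. Once the parity classes $X,Y$ are independent in every member, each $G_r$ is contained in $K_{X,Y}$, so $\delta(v)\le n/2$. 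The total is then at most $n^2/2$, but the actual total is $B_n=n^2/2-1$. So at most two pairs $(v,r)$ fall short, and the system is $K_{X,Y}$ up to a small number of missing edges, so it is bipartite. A bipartite system has no odd rainbow cycle, but this does not yet contradict the missing even length $\ell$. Instead I would use the near-complete bipartite structure directly: with $\ell\le n-3$ even and only two degree-deficient incidences, a rainbow $C_\ell$ can be built greedily inside $K_{X,Y}$ minus a few edges, using the many spare colors. That is the contradiction.

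\emph{Remaining small cases.} These are $\ell=3$, and $\ell=4$ when $n$ is even. The case $\ell=3$ is Lemma~\ref{lem:boundary-triangle}. For $\ell=4$ with $n$ even, $t=n-4$ is even, so this $t$ does not lie in the present odd range. The case therefore does not arise.
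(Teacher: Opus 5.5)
There is a genuine gap in the even-order endgame. When $n$ is even and $t$ is odd, $\ell=n-t$ is \emph{odd}; in your main range it is at least $5$. You write that it is even, although in your small-case remark you correctly note that an even $\ell$ would force $t$ even. So once you have $G_r\subseteq K_{X,Y}$ for every $r$, the system has no odd cycle at all. The rainbow $C_\ell$ you propose to build greedily inside ``$K_{X,Y}$ minus a few edges'' therefore cannot exist. Bipartiteness is fully consistent with the missing length, and your argument ends without a contradiction.

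The missing idea is an exact degree count. Both parity classes have size $n/2$. If some $G_r$ misses a cross edge $xy$, then $\delta(x),\delta(y)\le n/2-1$, so $\sum_v\delta(v)\le n^2/2-2$. If no member misses a cross edge, then every $G_r=K_{X,Y}$ and $\sum_v\delta(v)=n^2/2$. Both contradict $\sum_v\delta(v)=B_n=n^2/2-1$. A missing edge always costs system degree at both endpoints, so a total deficit of exactly one is impossible. Your remark that ``at most two pairs $(v,r)$ fall short'' overlooks this.

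Your permutation-generation step is also only sketched. The unavailable switch may relocate after every move, so routing ``around whichever side currently avoids it'' needs a real argument. You do not need full permutations:

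\begin{itemize}
\item Take $u,v$ in one parity class. Track their oriented separation $s$ in the step-$2$ cyclic order, and aim for $d=(t-1)/2$, where $1\le d\le n/2-2$.
\item If $s\ne d$, moving $u$ one step or moving $v$ one step gives two distinct switches with the same starting parity, each changing $s$ by one towards $d$. The bounds on $d$ ensure neither move collides with the other vertex.
\item By Lemma~\ref{lem:near-full-step2}(ii) at least one of the two is available, and (iii) preserves $S_2\ne\varnothing$.
\item Iterating places $u,v$ at distance $t-1$, where Lemma~\ref{lem:boundary-arbitrary-forbidden} excludes $uv$ from every member.
\end{itemize}

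Your odd-order branch and your treatment of $\ell=3$ are fine as written.
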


\begin{proof}
If $n$ is even and  $n-t=3$, then  the system contains a rainbow
$C_{3}$ use
Lemma~\ref{lem:boundary-triangle}.  Otherwise the hypotheses of
Lemma~\ref{lem:near-full-step2} hold: when $n$ is odd, the oddness of
$t$ makes $n-t$ even, hence $n-t\ge4$; when $n$ is even, $n-t$ is odd,
and after excluding $n-t=3$ we have $n-t\ge5$.  Fix the cycle $H_0$ supplied
there and keep only the switches allowed by that lemma.

If $n$ is odd, every switch is available.  The reached Hamilton cycles
are therefore closed under \eqref{eq:abstract-switch}, while
\eqref{eq:boundary-arbitrary-forbidden} gives the forbidden-pair condition
of Lemma~\ref{lem:exchange-closure} with $q=t-1$.  That lemma forces $n$
to be even, a contradiction.

Let $n=2h$ be even.  Here $n-t\ge5$ and $t\ge3$, so $n\ge8$ and
$h\ge4$.  Moreover
$
  1\le d:=\frac{t-1}{2}\le h-2,
$
because $t\le n-3=2h-3$.  The switches preserve the two vertex parity
classes $X,Y$.  For any ordered pair of distinct vertices $u,v$ in one
parity class, measure their oriented separation in the cyclic order obtained
by adding $2$.  If this separation is smaller than $d$, it can be increased
by one either by moving $u$ one step backwards or by moving $v$ one step
forwards. If it is larger than $d$, then there are two analogous moves that
decrease it.  The two candidate switches have the same starting-position
parity and are distinct.  By Lemma~\ref{lem:near-full-step2}, at least
one is available.  Repeating, we can place $u,v$ in positions differing
by $t-1$.  Equation \eqref{eq:boundary-arbitrary-forbidden} then shows
that every pair inside $X$, and likewise every pair inside $Y$, is
nonadjacent in every member.  Hence
$
 G_r\subseteq K_{X,Y}$ for all $r$.
Both parts have size $h$.  If any member misses a cross edge $xy$, then
$\delta(x),\delta(y)\le h-1$ and all other system degrees are at most
$h$, giving
\[
 \sum_v\delta(v)\le2h^2-2=\frac{n^2}{2}-2,
\]
contrary to \eqref{eq:boundary-layer}.  Thus every member is
$K_{X,Y}$, which instead gives system-degree sum $n^2/2$, again a
contradiction.
\end{proof}

\subsection{The boundary length \texorpdfstring{$C_{n-1}$}{C(n-1)} at the lower threshold}

For $C_{n-1}$ it is simpler to work directly with the crossing rows
obtained by deleting one vertex from the Hamilton cycle.

\begin{lemma}[Clean-parity exchange]\label{lem:clean-parity-exchange}
Let $n=2h\ge6$, and let
$
 H=x_0x_1\cdots x_{n-1}x_0
$
be a colored rainbow Hamilton cycle in a system with no rainbow
$C_{n-1}$.  Suppose that for some $\sigma\in\{0,1\}$,
\begin{equation}
 \delta(x_{k-1})+\delta(x_{k+1})=n
 \qquad(k\equiv\sigma\pmod2).
 \label{eq:clean-parity-degree}
\end{equation}
Then every step-$2$ switch that exchanges the vertices in positions
$p+1$ and $p+3$ with $p+1\equiv\sigma\pmod2$ is available, and these
switches may be iterated.  Consequently, the set
$\{x_i:i\equiv\sigma\pmod2\}
$
 is independent in every member of the system.
\end{lemma}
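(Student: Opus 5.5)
The plan is to follow the proof of Lemma~\ref{thm:t1}, using only the crossing rows with $k\equiv\sigma\pmod 2$. Normalize the colors so that $x_jx_{j+1}\in E(G_j)$. For each $k$ delete $x_k$ and apply Lemma~\ref{lem:crossing} to the Hamilton path $x_{k+1}\cdots x_{k-1}$ with unused colors $k,k-1$. Exactly as in Lemma~\ref{thm:t1}, this gives the chain
\[
\delta(x_{k+1})+\delta(x_{k-1})\le d_{G_k}(x_{k+1})+d_{G_{k-1}}(x_{k-1})\le n .
\]
For $k\equiv\sigma$, hypothesis \eqref{eq:clean-parity-degree} makes the chain tight. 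So the layer degrees equal the system degrees, and the crossing sets satisfy $A_k\sqcup B_k=[n-2]$. The end positions $1\in A_k$ and $n-2\in B_k$ then give
\[
e_{k+1}\in G_k,\qquad e_{k-2}\in G_{k-1}\qquad(k\equiv\sigma).
\]
Equivalently, every Hamilton edge is available in one neighbouring color, with the side alternating by parity: $e_j\in G_{j-1}$ for $j\equiv\sigma+1$ and $e_j\in G_{j+1}$ for $j\equiv\sigma$.

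Next, in a tight row $k\equiv\sigma$, I would locate the crossing index $j=2$. If $j=2$ were in $A_k$, then $x_{k+1}x_{k+3}\in G_k$. Deleting $x_{k+2}$ releases colors $k+1,k+2$. Recoloring $e_k$ with $k+1$ is legal since $k\equiv\sigma$, which frees color $k$ for the chord and yields a rainbow $C_{n-1}$. Hence $2\in B_k$, that is, $x_{k-1}x_{k+2}\in G_{k-1}$. Symmetrically, at $j=n-3$, the recoloring $e_{k-1}\mapsto k-2$ rules out $A_k$ there. This gives $x_{k-2}x_{k+1}\in G_k$.

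These two distance-three chords are exactly the new boundary edges of the step-$2$ switch on the segment $x_{k-2}x_{k-1}x_kx_{k+1}x_{k+2}$, which exchanges $x_{k-1}$ and $x_{k+1}$. The two reversed middle edges take the two remaining colors: $e_{k-1}$ gets $k-2$ and $e_k$ gets $k+1$, both legal by the alternation above. The four colors are distinct, so the switch is available. The same recoloring tricks show that no distance-two chord $x_{k-1}x_{k+1}$, with both ends in the class being permuted, lies in any member. For a color in the deleted block this is immediate. Otherwise I would shift colors along the retained arc as in Lemma~\ref{thm:t1}, using the alternating one-sided availability; this needs care and may require choosing the direction of the shift according to parity.

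For iteration, the switch permutes vertices only inside the parity class that appears in \eqref{eq:clean-parity-degree}. Since the tightness conclusions above depend only on the current colored cycle and that degree condition, the argument repeats as long as \eqref{eq:clean-parity-degree} survives the switch. Once iteration is established, the reached cycles realize every permutation of that class, because these switches are adjacent transpositions in the step-$2$ cyclic order. Then the distance-two exclusion (the case $q=2$ of the reasoning in Lemma~\ref{lem:exchange-closure}) places any two vertices of the class at distance two, so the class is independent in every member.

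The main obstacle is showing that \eqref{eq:clean-parity-degree} survives the switch. Transposing two adjacent vertices in the step-$2$ order changes which pairs are summed. I would first try to show that all vertices of the relevant class have $\delta=h$. Tightness says $d_{G_k}(x_{k+1})=\delta(x_{k+1})$, and the crossing partition describes the neighbourhood of $x_{k+1}$ in $G_k$ completely: it is $A_k$ together with $x_k$. Combining this with the forced distance-three and forbidden distance-two pattern should bound $\delta(x_{k\pm1})\le h$ individually. The pair sums $=n=2h$ then force equality at every vertex of the class, which makes the condition invariant under any permutation within that class.
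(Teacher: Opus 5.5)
Your tight-row analysis is correct and matches the paper. The alternating one-sided availability is the paper's statement that each clean pair $e_k,e_{k+1}$ with $k\equiv\sigma$ lies in $G_k\cap G_{k+1}$. The chords $x_{k-1}x_{k+2}\in G_{k-1}$ and $x_{k-2}x_{k+1}\in G_k$ are correctly forced; note that at $j=n-3$ your recoloring rules out the $B_k$ alternative $x_{k-3}x_{k-1}\in G_{k-1}$, not $A_k$.

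The gap is in how you assemble the switch. Taking both chords from the same row $k$ gives a switch exchanging $x_{k-1}$ and $x_{k+1}$. These sit in positions of parity $\sigma+1$, which is exactly the class whose degrees appear in the hypothesis. The statement instead concerns switches exchanging positions $p+1\equiv\sigma$ and concludes that $\{x_i:i\equiv\sigma\}$ is independent. So even a completed version of your argument would be about the wrong class.

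This choice also creates the two difficulties you flag, and neither is resolved.
\begin{enumerate}
\item The degree hypothesis is not preserved by your switch. The hoped-for individual bound $\delta\le h$ on that class does not follow from anything you derive; for even $h$ the pair condition around the step-$2$ cycle allows alternating values $h\pm s$.
\item The shift ``as in Lemma~\ref{thm:t1}'' breaks down. After deleting $x_k$ and recoloring $e_{k+1}$ with $k$, the next edge $e_{k+2}$ is only known to lie in $G_{k+2}\cap G_{k+3}$, not in $G_{k+1}$. Going backwards, $e_{k-3}$ is only known to lie in $G_{k-4}\cap G_{k-3}$. With the available information the shift stalls after one step in either direction and frees only the colors $k-2,\dots,k+1$, not an arbitrary $r$.
\end{enumerate}

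The missing idea is to pair chords from two \emph{different} clean rows. For $p+1\equiv\sigma$, row $p+1$ gives $x_px_{p+3}\in G_p$ and row $p+3$ gives $x_{p+1}x_{p+4}\in G_{p+3}$. So $x_px_{p+3}x_{p+2}x_{p+1}x_{p+4}$ is rainbow with colors $p,p+2,p+1,p+3$, and the middle edges need no recoloring.

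This switch fixes every vertex in positions of parity $\sigma+1$, so the degree hypothesis persists for free. It also preserves the color set of every clean block $\{e_k,e_{k+1}\}$. Hence the whole tight-row structure re-derives on each new cycle. These switches are adjacent transpositions in the step-$2$ order of the parity-$\sigma$ positions, so they realize every permutation of that class.

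For independence, use the blocks rather than a shift. Their color sets partition all $n$ colors. Given $u,v$ in the parity-$\sigma$ class and a color $r$, move $u,v$ to positions $k$ and $k+2$ flanking the block that contains $r$. Deleting $x_{k+1}$ releases $r$, so $uv\in G_r$ would close a rainbow $C_{n-1}$.
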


\begin{proof}
Call the indices $k\equiv\sigma\pmod2$ \emph{clean}.  For such $k$,
the calculation leading to \eqref{eq:t1-cross}. Together with
\eqref{eq:clean-parity-degree}, forces equality in the deleted-vertex
crossing row.  If $e_j=x_jx_{j+1}$ has color $c_j$, the first end position
of the clean row at $k$ gives $e_{k+1}\in G_{c_k}$, while the last end
position of the clean row at $k+2$ gives $e_k\in G_{c_{k+1}}$.  Together
with the original Hamilton colors, this yields
\begin{equation}
 e_k,e_{k+1}\in G_{c_k}\cap G_{c_{k+1}}.
 \label{eq:clean-two-color-block}
\end{equation}
Thus every clean pair $e_k,e_{k+1}$ is a two-color block, its two colors
may be interchanged.

Let $p+1$ be clean.  In the clean row obtained by deleting $x_{p+1}$,
tightness at the second crossing position gives one of two alternatives.
After deleting $x_{p+3}$, the alternative
$
 x_{p+2}x_{p+4}\in G_{c_{p+1}}
$
  allow us to recolor $e_{p+1}$ with the
released color $c_{p+2}$ by \eqref{eq:clean-two-color-block} and color the
new chord with $c_{p+1}$.  This gives a rainbow $C_{n-1}$, a contradiction.
Hence
$
 x_px_{p+3}\in G_{c_p}.
$
The symmetric clean row at $p+3$ gives
$
 x_{p+1}x_{p+4}\in G_{c_{p+3}}.
$
Therefore the standard switch
\[
 x_p x_{p+1}x_{p+2}x_{p+3}x_{p+4}
 \longmapsto
 x_p x_{p+3}x_{p+2}x_{p+1}x_{p+4}
\]
is rainbow, with colors
$ c_p,c_{p+2},c_{p+1},c_{p+3}$.  The switch exchanges two vertices of the
same parity, so every vertex in the opposite parity positions is fixed.
Moreover, the set of two colors carried by each clean two-edge block is
unchanged: the middle affected block merely exchanges $c_{p+1}$ and
$c_{p+2}$, while the two adjacent clean blocks retain their original color
sets because the new boundary edges receive $c_p$ and $c_{p+3}$.  The
vertices in the opposite parity positions are fixed, so
\eqref{eq:clean-parity-degree} also persists.  Hence the clean-block
structure may be re-established after every switch, and the same argument
iterates.

These switches are adjacent transpositions in the cyclic order of the
positions of parity $\sigma$; hence they generate arbitrary permutations
of the vertices of $X$.  The clean two-edge blocks are pairwise disjoint
and their color sets partition all $n$ colors.  Given distinct $u,v\in X$
and a color $r$, choose the clean block $e_k,e_{k+1}$ whose color set
contains $r$, and move $u,v$ to positions $k$ and $k+2$.  If $uv\in G_r$,
deleting $x_{k+1}$ deletes both edges of the block, releases color $r$,
and gives a rainbow $C_{n-1}$.
Hence $uv\notin G_r$.  Since $u,v$ and $r$ were arbitrary, $X$ is
independent in every member.
\end{proof}

\Needspace{7\baselineskip}
\begin{lemma}[The lower-threshold $C_{n-1}$]
\label{lem:boundary-nminus1}
Under the standing assumptions of this section, the system contains a rainbow
$C_{n-1}$.
\end{lemma}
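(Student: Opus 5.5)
We argue by contradiction: assume there is no rainbow $C_{n-1}$ and that \eqref{eq:boundary-layer} holds. The case $n=3$ is impossible, because $C_{n-1}$ would be a $2$-cycle; we read the statement for $n\ge4$. We handle $n=4$ and $n=5$ by hand: for $n=4$ a rainbow $C_3$ is supplied by Lemma~\ref{lem:boundary-triangle}, and for $n=5$ we re-run the $n=5$ part of Lemma~\ref{thm:t1} with the slack bookkeeping below. So the main case is $n\ge6$.

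We start from the deleted-vertex rows of the proof of Lemma~\ref{thm:t1}. For each $k$ we get
\[
\delta(x_{k+1})+\delta(x_{k-1})\le d_{G_k}(x_{k+1})+d_{G_{k-1}}(x_{k-1})\le n .
\]
Summing over $k$ gives $2\sum_v\delta(v)\le n^2$. Write the row slacks as $\epsilon_k:=n-\delta(x_{k-1})-\delta(x_{k+1})\ge0$; then $\sum_k\epsilon_k=n^2-2B_n$. This sum equals $1$ when $n$ is odd and $2$ when $n$ is even.

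\textbf{Odd $n$.} Exactly one row $k_0$ has $\epsilon_{k_0}=1$ and every other row is tight. We repeat the argument of Lemma~\ref{thm:t1} using only the tight rows. We want to show, first, that distance-two chords $x_jx_{j+2}$ are absent from every member, except possibly near $k_0$; second, that the two-colour property \eqref{eq:doubleH} holds away from $k_0$; and third, that distance-three chords are present in both boundary colours, so the step-$2$ switch is available at all positions outside a bounded window around $k_0$.

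The recoloring chain in the proof of \eqref{eq:t1-step2} travels along the retained arc and could hit the defective edge. We avoid this by running the chain in whichever direction does not pass $k_0$. This works because the chain may start from either end of the deleted pair, using $e_j\in G_{j+1}$ in the reverse direction.

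Once the switches are available, we renormalize after each one, as in Section~\ref{sec:boundary}. The defect budget is invariant, so there is always exactly one defective row, although its position may move. Since $n$ is odd, the step-$2$ positions form a single cycle. With a single possible blockage, we can still realize any transposition by routing around it, exactly as in the proof of Lemma~\ref{lem:boundary-odd-t}, where two candidate moves always exist. Any pair of vertices can then be placed at distance two in a tight region, so every member is edgeless. This contradicts the existence of the Hamilton cycle.

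\textbf{Even $n=2h$.} Summing the slacks separately over the two parity classes of $k$ gives two budgets that add to $2$. If one parity class $\sigma$ has zero slack, then \eqref{eq:clean-parity-degree} holds. Lemma~\ref{lem:clean-parity-exchange} then makes $\{x_i:i\equiv\sigma\}$ independent in every member, so each vertex of the other class has degree at most $h$ in every member. Moreover, each vertex of class $\sigma$ has all its neighbours in the other class, so it too has degree at most $h$. We then compare the clean rows, which force $\delta(x_{k-1})+\delta(x_{k+1})=n$ with both terms at most $h$, so both equal $h$. The remaining slack of $2$ must therefore sit on the other class. Applying Lemma~\ref{lem:clean-parity-exchange} inside the clean class and the same degree squeeze as at the end of Lemma~\ref{lem:boundary-odd-t}, we show that the other class is also independent. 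Then $G_r\subseteq K_{X,Y}$, and a single missing cross edge costs $2$. So either every member equals $K_{X,Y}$, which gives degree sum $n^2/2\ne B_n$, or the degree sum is at most $n^2/2-2<B_n$. Both are contradictions.

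The remaining case is one unit of slack in each parity class, so neither class is clean. Here we imitate the odd-$n$ argument: in each class the switches are blocked at most at one position. We hope to move the defects by switching until one class becomes clean, or else to run the routing argument directly.

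\textbf{Main obstacle.} The hardest step is the even case with one unit of slack per parity class. There Lemma~\ref{lem:clean-parity-exchange} does not apply directly, and we must verify that the single defective row per class blocks at most one switch per parity and never blocks the recoloring used to forbid chords $x_kx_{k+2}$. I would first try to show that a suitable switch transfers the defect so as to make one class clean.
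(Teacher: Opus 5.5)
\textbf{Odd order.} Your plan here is essentially the paper's. There is one imprecision in the recoloring step. The forward chain uses the first end cells of rows $i+1,\dots,r-1$, and the backward chain uses the last end cells of rows $r+2,\dots,i+1$. Both chains therefore use row $i+1$, so when $k_0=i+1$ there is no direction ``not passing $k_0$''. The fix is to argue with cells rather than rows. There is at most one missing cell in the whole crossing table, and the two end cells of a row feed different chains. Hence one of the two families, $e_j\in G_{j-1}$ for all $j$ or $e_j\in G_{j+1}$ for all $j$, holds globally. This is how the paper argues.

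\textbf{Even order: the $(1,1)$ split.} This is where the proposal has a genuine gap. The case you call the main obstacle, one unit of slack in each parity class, is never resolved; you only hope to move the defects. In fact it cannot occur. For $\sigma\in\{0,1\}$, each $i\equiv 1-\sigma$ appears exactly twice among the $x_{k\pm1}$ with $k\equiv\sigma$, so
\[
 \sum_{k\equiv\sigma\ (2)}\varepsilon_k
 =hn-2\sum_{i\equiv1-\sigma\ (2)}\delta(x_i)
 =2\Bigl(h^2-\sum_{i\equiv1-\sigma\ (2)}\delta(x_i)\Bigr).
\]
This is even. So the total budget $2$ splits as $(0,2)$ or $(2,0)$, and one parity class is always clean. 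Without this observation your even case is incomplete.

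\textbf{Even order: the clean case.} Your deductions after invoking Lemma~\ref{lem:clean-parity-exchange} do not hold.
\begin{itemize}
\item Independence of $X=\{x_i:i\equiv\sigma\}$ bounds only the degrees of $X$-vertices by $h$. A vertex of $Y$ may still have neighbours inside $Y$. So the claim that $Y$-vertices have degree at most $h$, and the conclusion $\delta(y)=h$ for all $y\in Y$, are unjustified.
\item The degree squeeze at the end of Lemma~\ref{lem:boundary-odd-t} presupposes $G_r\subseteq K_{X,Y}$, so it cannot prove that $Y$ is independent.
\end{itemize}

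The missing step is a degree count on $X$ followed by a direct cycle construction.
\begin{itemize}
\item The clean parity sum gives $\sum_{y\in Y}\delta(y)=h^2$, hence $\sum_{x\in X}\delta(x)=B_n-h^2=h^2-1$.
\item Since $\delta(x)\le h$ on $X$, exactly one vertex $x^*\in X$ has $\delta(x^*)=h-1$, and every other $x\in X$ has $N_{G_r}(x)=Y$ in every member.
\item Thus every edge between $X\setminus\{x^*\}$ and $Y$ is common to all members.
\item Suppose some $G_r$ had an edge $yy'$ inside $Y$. Take the cycle through all $h$ vertices of $Y$ and all $h-1$ vertices of $X\setminus\{x^*\}$ that uses $yy'$ once and otherwise alternates along cross edges. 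It has $n-1$ vertices. Give $yy'$ color $r$ and the $n-2$ common cross edges distinct other colors. This is a rainbow $C_{n-1}$, a contradiction.
\end{itemize}
Only after this do you get $G_r\subseteq K_{X,Y}$. The final count then finishes the proof: the degree sum is either $n^2/2$ or at most $n^2/2-2$.
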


\begin{proof}
For $n=4$ the assertion is Lemma~\ref{lem:boundary-triangle}.  Assume
$n\ge5$ and fix a colored rainbow Hamilton cycle
$H=x_0x_1\cdots x_{n-1}x_0$.  For each $k$, delete $x_k$ and write
$
 P_k:=y_1y_2\cdots y_{n-1}$ with $y_j=x_{k+j}$.
The unused colors on $P_k$ are $k$ and $k-1$.  Define
\[
 A_k:=\{j\in[n-2]:y_1y_{j+1}\in E(G_k)\},
 \qquad
 B_k:=\{j\in[n-2]:y_jy_{n-1}\in E(G_{k-1})\},
\]
and put
\[
 q_k:=(n-2)-|A_k|-|B_k|\ge0,
 \qquad
 \varepsilon_k:=n-\delta(x_{k+1})-\delta(x_{k-1}).
\]
The crossing inequality gives
\begin{equation}
 0\le q_k\le\varepsilon_k,
 \qquad
 \sum_k\varepsilon_k=n^2-2B_n.
 \label{eq:nminus1-count}
\end{equation}
Moreover, if $\varepsilon_k=0$, then $q_k=0$ and both layer degrees used
in the $k$th crossing row are equal to the corresponding system degrees.

\Needspace{5\baselineskip}
\smallskip
\noindent\emph{Odd order.}
Assume first that $n$ is odd.  Then \eqref{eq:boundary-gap} gives
$\sum_k\varepsilon_k=1$.  Hence there is at most one
index $k_0$ for which $q_{k_0}>0$. If it occurs, then $q_{k_0}=1$.
For every other $k$, the crossing sets partition $[n-2]$.  More precisely, for a tight row the first end cell gives the relevant
inclusion $e_j\in G_{j-1}$ after reindexing, while the last end cell gives
the inclusion $e_j\in G_{j+1}$.  Across all rows, the first-end cells are
exactly the cells used for the first family of inclusions and the last-end
cells are exactly those used for the second family.  There is at most one
missing cell in the entire crossing table.  If there is none, both families
hold.  If the missing cell is an interior cell, neither family is affected.
If it is a first-end or last-end cell, only the corresponding family can
fail.  Hence at least one of
\[
 e_j\in G_{j-1}\quad\forall j,
 \qquad\text{or}\qquad
 e_j\in G_{j+1}\quad\forall j
\]
holds globally.

Now suppose that a distance-two chord $f=x_ix_{i+2}$ belongs to an
arbitrary $G_r$.  Delete $e_i,e_{i+1}$.  If
$r\in\{i,i+1\}$, color $f$ with $r$ and keep all retained Hamilton edges in
their original colors, immediately obtaining a rainbow $C_{n-1}$.
Assume therefore that $r\notin\{i,i+1\}$.

If the globally clean propagation is
$
 e_j\in G_{j-1}$ for all $j$,
start with $e_{i+2}$ and move forward along the retained Hamilton arc toward
$e_r$, recoloring each encountered edge $e_j$ with color $j-1$.  The first
new color, $i+1$, was released by deleting $e_{i+1}$. Thereafter the new
color on $e_j$ is precisely the old color just released from the preceding
edge.  When $e_r$ is recolored with $r-1$, color $r$ becomes free.

If instead the globally clean propagation is
$
 e_j\in G_{j+1}$ for all $j$,
perform the symmetric shift in the reverse direction: start with $e_{i-1}$,
recolor it with the released color $i$, and continue backwards, recoloring
$e_j$ with $j+1$ until $e_r$ is reached.  Again the colors remain distinct
throughout and the old color $r$ is released at the last step.  In either
case assigning color $r$ to $f$ gives a rainbow $C_{n-1}$, a contradiction.
Hence all distance-two chords are absent from every member.

For $n=5$, choose any crossing row with $q_k=0$.  At its middle position
both candidates are distance-two chords, contradicting the preceding
prohibition.  Let $n\ge7$.  For the standard step-$2$ switch at position
$p$, the chord $x_px_{p+3}$ in color $p$ is forced by the cell $j=2$ of
the crossing row obtained by deleting $x_{p+1}$: the competing candidate in
that cell is a forbidden distance-two chord.  Symmetrically,
$x_{p+1}x_{p+4}$ in color $p+3$ is forced by the cell $j=n-3$ of the row
obtained by deleting $x_{p+3}$.  Thus a switch at $p$ fails only if the possible missing crossing cell is
one of the two cells
\[
 (\text{row }p+1,\text{ position }2),
 \qquad
 (\text{row }p+3,\text{ position }n-3).
\]
A single missing cell can block at most one switch: to block the first
role its within-row position must be $2$, whereas to block the second it
must be $n-3$. These are distinct because $n\ge7$.  Hence the standard
step-$2$ Hamilton switch is available at every position except possibly one.

This conclusion can be re-established after every performed switch.
Indeed, \eqref{eq:nminus1-count} is valid for every colored rainbow
Hamilton cycle, the sum $\sum_k\varepsilon_k$ remains $1$, and the preceding
propagation argument again forbids every distance-two chord in every
layer.  Thus on each reached cycle there is again at most one blocked
step-$2$ switch.  Since $n$ is odd, addition by $2$ is a single cyclic order on the
positions.  For two prescribed vertices, take the shorter separation in
this order.  If it is larger than one, there are two distinct boundary
transpositions that decrease it by one: move the first vertex one step
towards the second, or move the second one step towards the first.  At
most one switch is blocked, so one of these two moves is available.
Iterating makes the two vertices adjacent in the step-$2$ order, i.e. they
occupy positions at distance two on the Hamilton cycle.  Such a pair is forbidden in every member, so all
members would be edgeless, a contradiction.

\Needspace{5\baselineskip}
\smallskip
\noindent\emph{Even order.}
Now let $n=2h$, so $\sum_k\varepsilon_k=2$.  For
$\sigma\in\{0,1\}$,
\[
 \sum_{k\equiv\sigma\ (2)}\varepsilon_k
 =hn-2\sum_{i\equiv1-\sigma\ (2)}\delta(x_i).
\]
This is a nonnegative even integer.  Since the two parity sums add to $2$,
one of them is zero.  Fix $\sigma$ with
$
 \varepsilon_k=0 \ (k\equiv\sigma\pmod2),
$
and put
\[
 X:=\{x_i:i\equiv\sigma\pmod2\},\qquad Y=V\setminus X.
\]
Lemma~\ref{lem:clean-parity-exchange} shows that $X$ is independent in
every member.
The vanishing parity sum gives
\[
 \sum_{y\in Y}\delta(y)=h^2,
 \qquad
 \sum_{x\in X}\delta(x)=h^2-1.
\]
Since $X$ is independent, $\delta(x)\le h$ for $x\in X$.  Hence one
vertex $x^*\in X$ has system degree $h-1$, while every vertex of
$X\setminus\{x^*\}$ has system degree $h$.  It follows that every edge
between $X\setminus\{x^*\}$ and $Y$ belongs to every member of the
system.

If some $G_r$ contained an edge $yy'$ inside $Y$, use $yy'$ in color $r$
and alternate through all vertices of $Y$ and all vertices of
$X\setminus\{x^*\}$.  The remaining $n-2$ edges are common cross edges,
so they can receive the other $n-2$ colors.  This gives a rainbow
$C_{n-1}$, a contradiction.  Thus $Y$ is also independent in every
member, and therefore
$
 G_r\subseteq K_{X,Y}$ for all $r$.
If some cross edge is missing, then the system-degree sum is at most
$n^2/2-2$. If none is missing, it is $n^2/2$.  Both contradict
\eqref{eq:boundary-layer}.
\end{proof}

\subsection{Completion at the lower threshold}

\begin{proposition}[Pancyclicity at the lower threshold]
\label{prop:one-unit-stability}
If \eqref{eq:boundary-layer} holds and the system contains a rainbow
Hamilton cycle, then the system is rainbow pancyclic.
\end{proposition}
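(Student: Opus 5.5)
The plan is a straightforward assembly of the lemmas of this section, mirroring the completion of the baseline argument in Proposition~\ref{prop:pancy-baseline}. Fix the given rainbow Hamilton cycle, which supplies the rainbow $C_n$. For each target length $\ell$ with $3\le\ell\le n-1$ I will point to the lemma that produces a rainbow $C_\ell$ under \eqref{eq:boundary-layer}. Small orders are checked first. For $n=3$ the Hamilton cycle is the triangle. For $n=4$, Lemma~\ref{lem:boundary-triangle} gives $C_3$, and that is all that is needed. For $n=5$, Lemma~\ref{lem:boundary-triangle} gives $C_3$ and Lemma~\ref{lem:boundary-nminus1} gives $C_4$.

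Now assume $n\ge6$. Lemma~\ref{lem:boundary-nminus1} gives the rainbow $C_{n-1}$, and Lemma~\ref{lem:boundary-triangle} gives the rainbow $C_3$. For $4\le\ell\le n-2$ put $t=n-\ell$, so that $2\le t\le n-4\le n-3$. If $t$ is even, Lemma~\ref{lem:boundary-even-t} gives $C_{n-t}$. If $t$ is odd, then $t\ge3$, and Lemma~\ref{lem:boundary-odd-t} gives $C_{n-t}$. I will also observe that Lemma~\ref{lem:boundary-odd-t} itself dispatches the case $n-t=3$ through the triangle lemma, so $\ell=3$ with odd $t$ is covered twice. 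This is harmless.

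The only point requiring care is that every invoked lemma is stated with the range hypotheses actually in force. Lemmas~\ref{lem:boundary-even-t} and~\ref{lem:boundary-odd-t} both need $2\le t\le n-3$ (respectively $3\le t\le n-3$), and their internal reductions to Lemma~\ref{lem:near-full-step2} already handle the parity-dependent requirements $n-t\ge4$ or $n-t\ge5$. So no additional case analysis should be needed.

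Finally, I will note that, unlike the $n^2/2$ case, no exceptional system survives. The common balanced $K_{X,Y}$ has system-degree sum exactly $n^2/2\ne B_n$, so it is excluded by the standing equality \eqref{eq:boundary-layer}. This yields full rainbow pancyclicity.

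I expect no genuine obstacle here: the proposition is bookkeeping. The only step needing attention is confirming that the lengths $3$, $n-1$, and the middle range exhaust $\{3,\dots,n-1\}$ for every $n\ge4$ without gaps at the small orders $n=4,5,6$. For $n=6$ the middle range is $\ell=4$, with $t=2$ even, and Lemma~\ref{lem:boundary-even-t} applies since $2\le n-3$.
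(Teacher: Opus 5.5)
Your proposal is correct and is essentially the paper's own assembly: the Hamilton cycle gives $C_n$, Lemma~\ref{lem:boundary-nminus1} gives $C_{n-1}$, Lemma~\ref{lem:boundary-triangle} gives $C_3$, and Lemmas~\ref{lem:boundary-even-t} and~\ref{lem:boundary-odd-t} handle the remaining lengths according to the parity of $t=n-\ell$. The paper runs the middle range uniformly over $3\le\ell\le n-2$ for all $n\ge4$ rather than splitting off $n=4,5$, and only Lemma~\ref{lem:boundary-odd-t}, not the even-$t$ lemma, reduces to Lemma~\ref{lem:near-full-step2}; neither point affects your argument.
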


\begin{proof}
If $n=3$, the standing rainbow Hamilton cycle is already the only cycle
length required for rainbow pancyclicity.  Assume $n\ge4$.
Lemma~\ref{lem:boundary-triangle} gives $C_3$ and the standing Hamilton
cycle gives $C_n$.  Lemma~\ref{lem:boundary-nminus1} gives $C_{n-1}$.
For $3\le\ell\le n-2$, let $t=n-\ell$,  then
$2\le t\le n-3$. If $t$ is odd, automatically $t\ge3$.  Thus the
parameter hypotheses of Lemmas~\ref{lem:boundary-even-t} and
\ref{lem:boundary-odd-t} are satisfied, according to the parity of $t$.
Hence every length from $3$ to $n$ occurs.
\end{proof}

\begin{proof}[Proof of Theorem~\ref{thm:bondy}]
Set $S=\sum_v\delta(v)$.  Since $S$ is an integer and
$S\ge B_n=\lceil n^2/2\rceil-1$, either $S=B_n$ or the next possible
integer value already satisfies $S\ge n^2/2$ (strictly so when $n$ is
odd).
If $S=B_n$, Proposition~\ref{prop:one-unit-stability} applies and the
system is rainbow pancyclic.  If $S\ge n^2/2$, apply
Proposition~\ref{prop:pancy-baseline}, its only non-pancyclic outcome is
the common balanced complete bipartite system.  These two cases exhaust
all possible integer values of $S$ under the hypothesis.
\end{proof}

\section{Extremal examples and sharpness}
\label{sec:sharpness}

There are two extremal points to record.  At the stated density the only
non-pancyclic system is the common balanced complete bipartite graph.  When
$n$ is even, deleting one cross edge from a single member gives a
counterexample at the next lower integer level.

\begin{proposition}[The exceptional family]
\label{prop:extremal-example}
Let $n=2m\ge4$ and let $V=X\sqcup Y$ with $|X|=|Y|=m$.  If
\[
 G_1=\cdots=G_n=K_{X,Y},
\]
then the system contains a rainbow Hamilton cycle and a rainbow
$C_\ell$ for every even $\ell$ with $4\le \ell\le n$, but it contains no
odd cycle.  In particular it is exactly the non-pancyclic alternative
allowed by Theorem~\ref{thm:bondy}.
\end{proposition}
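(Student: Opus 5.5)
We verify the three claims of Proposition~\ref{prop:extremal-example} directly, since the system is completely explicit. Because every member is the same graph $K_{X,Y}$, a subgraph is rainbow exactly when it has at most $n$ edges: any edge set of $K_{X,Y}$ of size at most $n$ can be given pairwise distinct colors from $[n]$, each edge lying in every member. So the whole problem reduces to ordinary cycles of $K_{X,Y}$ of length at most $n$.

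\emph{Existence of even cycles.} Write $X=\{x_1,\ldots,x_m\}$ and $Y=\{y_1,\ldots,y_m\}$. For $2\le k\le m$, the cycle $x_1y_1x_2y_2\cdots x_ky_kx_1$ lies in $K_{X,Y}$ and has length $2k$. Its $2k\le n$ edges can be colored injectively by $1,\ldots,2k$, and each edge belongs to $G_j$ for every $j$, so this coloring is legal. This produces a rainbow $C_\ell$ for every even $\ell$ with $4\le\ell\le n$. The case $k=m$ is a rainbow Hamilton cycle, which uses all $n$ colors once.

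\emph{No odd cycle.} Every cycle in any member, and hence every rainbow cycle, alternates between $X$ and $Y$, because both sides are independent in $K_{X,Y}$. Such a cycle therefore has even length, so no rainbow $C_3$ exists. As $n\ge4$, the system is not rainbow pancyclic.

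\emph{Matching the exceptional alternative.} Every vertex has degree $m$ in every member, so $\sum_v\delta(v)=nm=n^2/2$, which exceeds $\lceil n^2/2\rceil-1$. The system thus satisfies the hypotheses of Theorem~\ref{thm:bondy} and is precisely its excluded configuration, with $n$ even and $G_i=K_{X,Y}$ for all $i$ over a balanced partition.

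No step here is a real obstacle. The only point needing care is the legality of the coloring, and it is immediate because all members coincide.
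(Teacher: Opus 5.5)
Your proof is correct and follows essentially the same route as the paper. Both note that every edge of $K_{X,Y}$ lies in every member, so any alternating cycle on $k$ vertices from each side can be labeled rainbow, and both use the common bipartition to rule out odd cycles. Your explicit check that $\sum_v\delta(v)=n^2/2$ satisfies the theorem's hypothesis is a harmless addition that the paper leaves implicit.
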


\begin{proof}
Every edge of $K_{X,Y}$ belongs to every member.  Hence any alternating
Hamilton cycle can be assigned the $n$ distinct labels and is rainbow.
Likewise, choosing $k$ vertices from each part gives a rainbow $C_{2k}$
for every $2\le k\le m$.  The common bipartition forbids every odd cycle.
\end{proof}

\begin{proposition}[Sharpness witness at even order]
\label{prop:threshold-minus-two}
Let $n=2m\ge4$, let $V=X\sqcup Y$ with $|X|=|Y|=m$, and fix an edge
$e\in E(K_{X,Y})$.  If
\[
 G_1=K_{X,Y}-e,
 \qquad
 G_2=\cdots=G_n=K_{X,Y}.
\]
Then the system contains a rainbow Hamilton cycle and
$
 \sum_{v\in V}\delta(v)=\frac{n^2}{2}-2,
$
but it is not rainbow pancyclic and is not the exceptional system in
Theorem~\ref{thm:bondy}.  Consequently the threshold in
Theorem~\ref{thm:bondy} cannot be replaced by $n^2/2-2$ while retaining
the same conclusion and the same exceptional family.
\end{proposition}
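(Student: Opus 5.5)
The plan has four parts: compute the system degrees, exhibit a rainbow Hamilton cycle, show there are no odd cycles, and check that the system is not the exceptional one.

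\emph{System degrees.} Write $e=x_0y_0$ with $x_0\in X$ and $y_0\in Y$. Every vertex has degree $m$ in each $K_{X,Y}$. In $G_1=K_{X,Y}-e$ the endpoints $x_0,y_0$ have degree $m-1$ and all other vertices keep degree $m$. Hence $\delta(x_0)=\delta(y_0)=m-1$ and $\delta(v)=m$ for every other $v$. Summing,
\[
 \sum_v\delta(v)=2m\cdot m-2=\frac{n^2}{2}-2.
\]

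\emph{Rainbow Hamilton cycle.} Since $m\ge2$, take any alternating Hamilton cycle of $K_{X,Y}$; it has $n=2m$ edges. Give color $1$ to one of its edges other than $e$. This is possible because the cycle has at least four edges and at most one of them equals $e$. The remaining $n-1$ edges lie in $K_{X,Y}=G_2=\cdots=G_n$, so they can receive the colors $2,\dots,n$ bijectively. The resulting cycle is rainbow.

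\emph{No odd cycles, hence not pancyclic.} Every member is a subgraph of $K_{X,Y}$. So every edge of any rainbow cycle joins $X$ to $Y$, and the cycle has even length. In particular there is no rainbow $C_3$, and since $n\ge4$ the length $3$ is required for rainbow pancyclicity.

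\emph{Not the exceptional system.} The exceptional system requires all members to be equal to a single $K_{X',Y'}$. Here $G_1$ has $m^2-1$ edges while $G_2$ has $m^2$ edges, so $G_1\neq G_2$ and no such partition exists.

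\emph{Sharpness.} Since $\frac{n^2}{2}-2=\lceil n^2/2\rceil-2$ for even $n$, this system satisfies the weakened bound $\sum_v\delta(v)\ge\frac{n^2}{2}-2$. It contains a rainbow Hamilton cycle, yet it is neither rainbow pancyclic nor the exceptional family. Therefore the threshold in Theorem~\ref{thm:bondy} cannot be lowered to $n^2/2-2$ with the same conclusion.

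All steps are routine. The only point needing care is the color-$1$ assignment in the Hamilton cycle, which must avoid the missing edge $e$; this is handled by choosing any cycle edge other than $e$.
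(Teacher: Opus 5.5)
Your proposal is correct and follows the paper's proof essentially step for step: the degree count, the rainbow coloring with color $1$ placed on a cycle edge other than $e$, bipartiteness ruling out odd cycles, and $G_1$ differing from the other members. Two small differences tidy the same argument. You treat the cases $e\in E(C)$ and $e\notin E(C)$ uniformly. You also use $G_1\ne G_2$ rather than the paper's $G_1\ne K$ to exclude the exceptional system, which directly shows that no common $K_{X',Y'}$ can exist.
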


\begin{proof}
Let $K=K_{X,Y}$.  Choose any Hamilton cycle $C$ of $K$.  If $e\notin
E(C)$, every edge of $C$ belongs to every member, so assign the $n$
labels arbitrarily.  If $e\in E(C)$, choose an edge $f\in E(C)\setminus
\{e\}$.  Give $f$ label $1$, give $e$ any label from $\{2,\ldots,n\}$,
and assign the remaining labels bijectively to the remaining edges of
$C$.  All these assignments are legal, so $C$ is rainbow.

The two ends of $e$ have system degree $m-1$ and all other vertices have
system degree $m$.  Hence
\[
 \sum_v\delta(v)=2(m-1)+(2m-2)m=2m^2-2=\frac{n^2}{2}-2.
\]
Every member is a subgraph of the same bipartite graph $K$, so the
system has no odd cycle.  Since $G_1\ne K$, it is not the exceptional
system of Theorem~\ref{thm:bondy}.
\end{proof}

For even $n$, Proposition~\ref{prop:threshold-minus-two} proves exact
integer sharpness: the theorem holds at $n^2/2-1$ and fails at
$n^2/2-2$ with the same exceptional family.

\paragraph{The odd-order threshold.}
The corresponding sharp threshold for odd order remains open.  More
precisely, for odd $n$ let $b_n$ be the least integer $B$ such that every
$n$-member graph system on a common $n$-vertex set which contains a rainbow
Hamilton cycle and satisfies
\[
   \sum_{v\in V}\min_{i\in[n]} d_{G_i}(v)\ge B
\]
is rainbow pancyclic.  Theorem~\ref{thm:bondy} gives
$
   b_n\le \frac{n^2-1}{2}.
$
Determining $b_n$ is a natural sharpness problem.  The ordinary diagonal
case does not settle it, since the aggregate system degree allows the
minimizing member to vary from vertex to vertex.

\section*{Declaration of generative AI and AI-assisted technologies in the manuscript preparation process}
During the preparation and internal verification of this work, the authors used
OpenAI ChatGPT to assist with manuscript organization and exposition,
literature-search support, and internal consistency checks during proof
development.  The authors independently reviewed and verified all mathematical
claims, proofs, references, and final wording, edited the content as needed,
and take full responsibility for the content of the publication.

\end{document}